\documentclass[aap,press]{apt} \volno{60} \partno{}

\renewcommand{\crnotice}{}   %
\firstpage{1}
\usepackage[english]{babel}
\usepackage{color}
\usepackage{latexsym,bm,mathscinet}
\usepackage{epsfig}
\usepackage{array}
\usepackage{algorithm}
\usepackage{algpseudocode}
\usepackage[colorlinks,linkcolor=blue, citecolor=blue,urlcolor=blue]{hyperref}
\usepackage[dvipsnames]{xcolor}
\usepackage{booktabs}
\usepackage{tikz}
\usetikzlibrary{arrows.meta, calc}
\newcommand{\dd}{{\,\mathrm d}}
\newcommand{\si}{\sigma}

\newcommand{\ka}{\kappa}

\renewcommand{\phi}{\varphi}

\renewcommand{\scr}[1]{{\mathcal #1}}

\newcommand{\EE}{\mathbb{E}}

\newcommand{\PP}{\mathbb{P}}
\newcommand{\ind}{\mathbf{1}}
\newcommand{\RR}{\mathbb{R}}

\newcommand{\QQ}{\mathbb{Q}}

\newcommand{\Bm}{\begin{bmatrix}}
\newcommand{\Em}{\end{bmatrix}}

\newcommand{\T}{{\top}}
\DeclareMathOperator{\atan}{atan2}

\newcommand{\rad}{\mathrm{\shortparallel}}
\newcommand{\ang}{\mathrm{\perp}}

\newcommand{\simiid}{\stackrel{\rm iid}{\sim}}

{\theorembodyfont{\rmfamily} \newtheorem{ass}{\noindent Assumption}[section]}
{\theorembodyfont{\rmfamily} }
\paperno{????-00000}
\authornames{F.H. VAN DER MEULEN}
\shorttitle{Conditioning diffusions on a terminal bearing}
\renewcommand{\ams}[2]
           {\begin{center}
            \begin{minipage}{315.83pt}
            \small
            \noindent 2020 Mathematics Subject Classification:~Primary {\uppercase{#1}}\\
            \phantom{2020 Mathematics Subject Classification:~}Secondary {\uppercase{#2}}
            \end{minipage}
            \end{center}\par
            \newcommand\mscpri{#1}
            \newcommand\mscsec{#2}
            \normalsize
           }
\newcommand{\citet}{\cite}
\newcommand{\citep}{\cite}
\begin{document}

\title{Conditioning diffusions on a terminal bearing}

\authorone[Vrije Universiteit Amsterdam]{Frank van der Meulen}
\addressone{Department of Mathematics,
De Boelelaan 1111, 1081 HV Amsterdam, Netherlands}
\emailone{f.h.van.der.meulen@vu.nl}

\begin{abstract}
We study sampling of a diffusion on $[0,T]$ conditioned on its terminal position lying on a prescribed ray, i.e.\ on an exactly observed bearing. For a linear drift we derive closed-form expressions for the Doob $h$-function and its gradient. For a broad class of tracking models admitting a Kronecker block structure, the guiding term is obtained from a single row of a matrix exponential, and the conditioned process decomposes into independent radial and angular components. For a nonlinear drift, we construct a guided process using the $h$-function of the linear model and prove absolute continuity of the law of the true conditioned process with respect to the law of the guided process, with an explicit likelihood ratio. This result is used to devise an auxiliary particle filter for bearings observed sequentially in time without noise. Numerical experiments show that the proposed filter approximates the filtering distribution and estimates the log-likelihood considerably more accurately than the shifted Rayleigh filter, at the cost of a higher computing time.
\end{abstract}
\keywords{Doob's $h$-transform; guided process; bearings-only tracking; auxiliary particle filter; shifted Rayleigh filter}
\ams{60J60}{62M20; 65C05}

\section{Introduction}

Filtering for tracking a moving object has received much attention within the engineering and signal processing literature. Often the movement is described by a discrete-time state-space model. Here, we rather assume that  the underlying latent process --capturing position, velocity  and possibly acceleration as well-- is modelled by a continuous-time process.  Observations are nevertheless only obtained discretely in time. More specifically, if $Z_t =(X_t, V_t,\ldots)$ with $X_t, V_t \in \RR^p$, we assume the latent continuous-time process $Z\equiv (Z_t,\, t\ge 0)$ satisfies a stochastic differential equation (SDE) of the form
\begin{equation}\label{eq:Z}
	\dd Z_t = B Z_t \dd t  + q f(t,Z_t)\dd t + q \dd W_t, \qquad Z_0 =z_0,
\end{equation}
with $B\in\RR^{d\times d}$, $q\in \RR^{d\times m}$, $f\colon [0,\infty)\times\RR^d \to \RR^m$ and $W_t$ a standard Wiener process in dimension $m$. Commonly $m<d$, so that $Q=qq^\T$ is singular.  At times $t_1<t_2<
\cdots<t_n$ we observe $U_n=X_{t_n}/\|X_{t_n}\|$, implying that $X_{t_i}$ lies on the ray $\{Ru\colon R>0\}$ for $1\le i \le n$. 
 This setup includes  the nearly-constant velocity  and  Singer model in $\RR^2$ as also  the 3-dimensional constant-turn model for example. For an overview of common tracking models we refer to the survey in \cite{Li2003survey1}. Note that the density of $U$ conditional on $Z_0$ is generally not known in closed form.  In this paper we focus on sampling the process $Z$ on a fixed interval $[0,T]$ conditional on $Z_0=z_0$ and a realisation $u$ from $U$.  This is subsequently applied to resolve the filtering problem where at time $t_i$ we observe $u_{i+1}$ and wish to reconstruct $(Z_t,\, t\in [t_i, t_{i+1}])$. For this, we devise an auxiliary particle filter. The latter is challenging as we assume bearings to be observed without (extrinsic) noise. 

\subsection{Approach}

Suppose first $f\equiv 0$. We will refer to the corresponding model as the {\it linear model}, ``linear'' referring to the corresponding linear drift. 
  The essential tool for conditioning the process $Z$ is Doob's $h$-transform, a  general technique for conditioning stochastic processes.
The key result that is being used is the following: if $u\mapsto g_u(t,z)$ is the density of $U \mid Z_t=z$, then the conditioned process satisfies the SDE
\begin{equation}\label{eq:Zcirc}
	 \dd Z_t^\star  = B Z_t^\star \dd t  + Q r(t,Z_t^\star)\dd t + q\dd W_t^\star,
\end{equation}
where $Q=qq^\T$, $r(t,z)=\nabla_z \log g_u(t,z)$ and $W_t^\star$ is a  standard Wiener process in dimension $m$. Notationally, we often suppress the subscript $u$ from $g$.  Since $X_T$ given $Z_t=z$ is Gaussian, $g(t,z)$ is the density of a projected normal distribution. At a single observation time this density is classical: it underlies the exact one-step bearing update of the shifted Rayleigh filter \cite{Clark2007SRF}. Our focus is on the conditioned process on path space. We derive a closed form for the drift of the conditioned process  (Theorem \ref{thm:general case g and nabla log g}), which simplifies considerably when the conditional covariance of $X_T$ is isotropic. For a block structure shared by most tracking models, we prove a decomposition of the conditioned process into a radial and an angular process driven by independent noise (Theorem \ref{thm:closed-system}). If the initial state $z_0$ is unknown, we show how samples from the posterior distribution can be obtained under a Gaussian prior (Theorem \ref{thm:prior}).

 In case $f\neq 0$, following the approach by \cite{schauer2017guided}, we define the guided process $Z^\circ$ to be the process solving the SDE
\begin{equation}\label{eq:guided-nonlinear}
	 \dd Z_t^\circ  = B Z_t^\circ \dd t + q f(t,Z^\circ_t) \dd t  + Q r(t,Z_t^\circ)\dd t + q\dd W_t^\circ,
\end{equation}
with $W_t^\circ$ a standard Wiener process in dimension $m$. We prove that $Z_T^\circ$ is on the ray 
and prove that the distribution (on path space) of the  true conditioned process is absolutely continuous with respect to the distribution of the guided process with  explicit likelihood ratio (Theorem \ref{prop:ac-nonlinear}). This result is subsequently used to devise an auxiliary particle filter. 

Numerical experiments illustrate our results. Our main example is a planar model for position and velocity where the drift of the velocity component is given by $v\mapsto -\ka\left(\|v\|^2-v_*^2\right)v$. For this case we devise an implicit discretisation scheme using splitting to sample the guided process in a numerically stable manner that ensures paths end up on the ray.

\subsection{Related work}

The discrete-time linear bearings-only tracking model is classical with many papers appearing in the engineering literature, see for instance   %
\cite{aidala1979kalman}, \cite{wang2009target},  \cite{Farina1999} and \cite{Mallick2022}. 
The model was used to illustrate the bootstrap filter in the seminal paper by \cite{gordon1993novel}. It has reappeared in the computational statistics community as an example in papers such as \cite{Fearnhead2002} and textbooks on Monte Carlo methods (\cite{Chopin2020introduction}, section 2.4.1 and \cite{robert1999monte}, section 14.3). 

For a Gaussian prior on the position, the exact update by a single bearing is the basis of the shifted Rayleigh filter (\cite{Clark2007SRF}, \cite{Ozelci2013}), which subsequently approximates the posterior by moment matching. The density $g$ is that of the projected normal distribution, studied in directional statistics, see for instance \cite{WangGelfand2013,HernandezStumpfhauser2017}. Continuous-discrete Gaussian filters, such as the continuous-discrete extended, unscented and cubature Kalman filters, are standard, see e.g.\ \cite{Sarkka2013}.

Doob's-$h$-transform for conditioning diffusion processes is well known, see for instance \cite{RogersWilliams_vol2} (Chapter IV-39), as are Markovian bridges to a fixed endpoint \cite{FitzsimmonsPitmanYor1993}. Conditioning a diffusion on (a functional of) its terminal value can equivalently be described by an initial enlargement of the filtration with that value, the guiding term then being the information drift of the enlargement \citep{Jacod1985,MansuyYor2006}; for conditioned SDEs from this point of view see \cite{Baudoin2002}.

 The idea of replacing the unknown Doob-$h$-function by an approximation to define a guided process has been introduced in \cite{schauer2017guided} for the specific setting where $Z_T$ is fully observed and the diffusion is uniformly elliptic. This has been extended to hypo-elliptic diffusions and observing $CZ_T$ for a known matrix $C$ \cite{bierkens2020}. The conditioning considered here is  more challenging. Several works in the recent literature on computational statistics and machine learning target learning Doob's $h$-transform, see for instance \cite{chopin2023}, \cite{bffg} and references therein.

\subsection{Outline}

In Section \ref{sec:constant_velocity} we use the planar nearly constant velocity model to motivate approaching this problem from a continuous time perspective. In Section \ref{sec:general} we derive $g$ and the drift of $Z^\star$, as defined in Equation \eqref{eq:Zcirc}. Theorem \ref{thm:general case g and nabla log g} provides closed form expressions for $g$ and $\nabla_z \log g(t,z)$; the case where the conditional covariance of $X_T$ is isotropic is treated as a corollary.  In Section \ref{sec:block} we show that for many tracking models a block structure holds, under which the guiding term is obtained by computing the first row of a small matrix exponential. Well-known tracking models are presented as examples. We then show that, under this block structure, the conditioned process decomposes into independent radial and angular processes. In Section \ref{sec:ncv} we return to the planar nearly constant velocity model, derive the exact rate at which it hits the ray, and discuss numerical discretisation. In Section \ref{sec:nonlinear} we consider models with  nonlinearity $f$ in the noise-driven components, and show that guided processes built on $g$ are equivalent to the true conditioned process, with an explicit likelihood ratio.  In Section \ref{sec:prior} we incorporate a prior on the initial state. The obtained results can be exploited to construct a sequential Monte Carlo method that is presented in Section \ref{sec:smc}.  The paper concludes with numerical illustrations. The appendix contains proofs of some results stated in the main text.

\subsection{Notation}
Throughout, we will assume $Z_t$ is the full state of the system at time $t$. Depending on the specific model considered, $Z_t=(X_t, V_t)$ or $Z_t=(X_t, V_t, A_t)$, with $X_t$, $V_t$ and $A_t$ denoting position, velocity and acceleration at time $t$ respectively. The dimension of the position is denoted by $p\in \{2,3\}$. For $Z_t=(X_t, V_t)$ we say there are 2 blocks; for $Z=(X_t, V_t, A_t)$ we say there are 3 blocks. The number of blocks will be denoted by $k$, so that $d=kp$.

Throughout, we fix a time $T$ at which we observe the process to be on a ray. We define $\tau(t)=T-t$.

By $\Phi(x)$ we denote the cumulative distribution function of the standard normal distribution, evaluated at $x$. By $\phi(x;\mu,\Sigma)$ we denote the density of the normal distribution with mean vector $\mu$ and covariance matrix $\Sigma$, evaluated at $x$. In case $\mu$ and $\Sigma$ are omitted, $\phi(x)$ denotes the density of the standard normal distribution, evaluated at $x$.

For compatible matrices $A\otimes B$ denotes the Kronecker product of $A$ and $B$. $I_k$ and $0_k$ denote the identity matrix and zero matrix in $\RR^{k\times k}$ respectively. $e_j$ denotes the $j$-th standard basis vector (of $\RR^k$, unless stated otherwise).

\section{Motivation: the planar nearly constant velocity model}\label{sec:constant_velocity}
In this section we first specifically go into the planar nearly constant velocity model, where $Z=(X,V)$ and
\[
\dd X_t = V_t \dd t \qquad  \dd V_t =  \dd W_t,
\]
with $W$ a standard Wiener process in $\RR^2$.
 Rather than observing $X$, the position, at a fixed known time $T$,  we observe  its angle with the positive $x$-axis, up to additive noise. Thus, assume we observe a realisation $y_T$ from
\begin{equation}
Y_T = \theta(X_T) + \varepsilon, \quad \varepsilon \sim \mathcal{N}(0,\sigma_Y^2),
\end{equation}
where $\theta(x) = \atan(x)$. Here, $\atan\colon \RR^2\setminus\{0\} \to (-\pi, \pi]$ measures the angle.  While the latent dynamics of $Z_t$ are simple, the filtering problem poses a challenging question which can be approached in two different ways:
\begin{itemize}
	\item by discretising the state dynamics and then conditioning;
	\item by conditioning the continuous process and subsequently discretising.
\end{itemize}
Below we argue why we  believe the second approach is preferable.

\subsection{Problems in the  discrete time formulation}
Using Euler discretisation, the state dynamics can be written as
\[
	X_t  = X_{t-1}+V_{t-1} \qquad 
	V_t = V_{t-1} + \eta_t,\]
with $\eta_t\simiid N(0,\sigma^2 I_2)$.
Let $Z_t=(X_t, V_t)$.
The discrete-time process evolves according to the Markov kernel \[ P_t(z_{t-1}, \dd z_t) = \delta_{x_{t-1}+v_{t-1}}(\dd x_t) \phi(v_t; v_{t-1}, \sigma^2 I_2) \dd v_t. \]
Suppose we aim for particle filtering using the guided particle filter (\cite{Chopin2020introduction}, chapter 10).  The ``optimal'' guided mutation kernel evolves particles according to
\[ M_t^{\rm opt}(z_{t-1}, \dd z_t) = \frac{ P_t(z_{t-1}, \dd z_t) f_t(y_t \mid z_t)}{\int  P_t(z_{t-1}, \dd z_t) f_t(y_t \mid z_t)} ,  \]
with $f_t$ the observation density. Note that since  $x_t$ is concentrated on $x_{t-1}+v_{t-1}$ and $y_t$ only depends on $x_t$ we have  $M_t^{\rm opt}=P_t$. Thus the optimal guided particle filter equals the bootstrap filter, which fails when $\sigma_Y$ is small. 
As a patch,  in exercise 10.1 of \cite{Chopin2020introduction} it is suggested to consider the ``lag-1''-smoothing problem where $z_t$ is sampled conditional on $(z_{t-1}, y_{t+1})$ rather than $(z_{t-1}, y_t)$. The real cause lies in applying (Euler) discretisation to the hypo-elliptic SDE system. This motivates our point of view of considering the latent process as a continuous-time process.

\subsection{The continuous time problem}

The law of $X_T \mid Z_t=z$ is Gaussian with mean
\begin{align*}
	m(t,z)&:= \EE[X_T \mid Z_t=z] = x+\tau(t)v\\
	\Sigma(t)&:= \mbox{Cov}(X_T \mid Z_t=z) = \frac13 \tau(t)^3 I_2,
\end{align*}
where $z=(x,v)$ and $\tau(t)=T-t$. The Doob-$h$-function for conditioning the diffusion process on $Y_T$ is given by
\[ g^{\sigma_Y}(t,z) = \int \phi(y_T; \theta(x_T), \sigma_Y^2) \phi(x_T; m(t,z), \Sigma(t)) \dd x_T. \]
Switching to polar coordinates gives 
\begin{align*}
	g^{\sigma_Y}(t,z) =
\int_{\psi\in (-\pi,\pi]} \phi(y_T; \psi, \sigma_Y^2) \left(	\int_{R>0} R \phi(R [\cos \psi, \sin \psi]^\T; m(t,z), \Sigma(t))  \dd R\right) \dd \psi, 
\end{align*}
If $\sigma_Y \downarrow 0$, this tends to   \[ g(t,z)= \int_{R>0} R \phi(Ru; m(t,z), \Sigma(t)) \dd R,\]
where $u=[\cos y_T, \sin y_T]^\T$.

\section{Linear model ($f\equiv 0$)}\label{sec:general}
In this section we derive closed form expressions for $g$ and $\nabla_z g(t,z)$. 
Assume the position is given by $X_t = CZ_t$ for a fixed matrix $C\in \RR^{p\times d}$.  The  observation is given by  the unit-vector $u\in \RR^p$. 
\begin{defn}
	The matrix valued function $L$ is defined by $L(t):= Ce^{B\tau(t)}$.
\end{defn}

\begin{lem}\label{lem:condmeancov}  We have
\[ X_T \mid Z_t=z \sim N(m(t,z),\Sigma(t)),\]
with
\[ m(t,z)=\EE[X_T\mid Z_t=z]=L(t)z \]
and
\[ \Sigma(t)= \int_t^T L(s) Q L(s)^\T \dd s  .\]
\end{lem}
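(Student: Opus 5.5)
The plan is to solve the linear SDE explicitly via variation of constants, with $f\equiv 0$. Since $\dd Z_s = BZ_s\dd s + q\dd W_s$, Itô's formula applied to $e^{-Bs}Z_s$ gives $\dd(e^{-Bs}Z_s) = e^{-Bs}q\dd W_s$; the cross term vanishes because $e^{-Bs}$ is deterministic and of finite variation. Integrating from $t$ to $T$ and multiplying by $e^{BT}$ yields
\[
Z_T = e^{B(T-t)}Z_t + \int_t^T e^{B(T-s)} q \dd W_s .
\]
Applying $C$ and using $X_T = CZ_T$ together with $\tau(t)=T-t$ gives
\[
X_T = L(t) Z_t + \int_t^T C e^{B(T-s)} q \dd W_s .
\]

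Next we condition on $Z_t=z$. The stochastic integral depends only on the increments $(W_s-W_t)_{s\in[t,T]}$. These increments are independent of $\mathcal{F}_t$, and hence of $Z_t$. The integrand is deterministic, so the integral is a centred Gaussian vector. Its covariance follows from the Itô isometry:
\[
\int_t^T C e^{B(T-s)} Q e^{B^\T(T-s)} C^\T \dd s .
\]
Consequently, given $Z_t=z$, $X_T$ is Gaussian with mean $L(t)z$ and this covariance.

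It remains to match the covariance with the form stated in Lemma \ref{lem:condmeancov}. Reading the definition $L(s)=Ce^{B\tau(s)}=Ce^{B(T-s)}$, we have $L(s)QL(s)^\T = Ce^{B(T-s)}Qe^{B^\T(T-s)}C^\T$. This is exactly the integrand above, so $\Sigma(t)=\int_t^T L(s)QL(s)^\T\dd s$ as claimed.

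No step is genuinely hard. The only points needing care are the independence of the future Brownian increments from $Z_t$, which is what justifies conditioning, and the bookkeeping of $\tau(s)=T-s$ inside the integral. Nondegeneracy of $\Sigma(t)$ is not claimed in the lemma, so it need not be addressed here.
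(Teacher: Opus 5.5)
Your proof is correct and takes the same approach as the paper. The paper simply invokes the standard transition law $Z_T\mid Z_t=z\sim N\big(e^{B\tau(t)}z,\int_t^T e^{B(T-s)}Qe^{B^\T(T-s)}\dd s\big)$ and multiplies by $C$, whereas you derive that fact explicitly via variation of constants and the It\^o isometry.
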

\begin{proof}
This follows from the standard fact that
\[ Z_T \mid Z_t=z \sim N\left(e^{B\tau(t)} z, \int_t^T e^{B(T-s)}Qe^{B^\T (T-s)} \dd s\right),\]
upon multiplying by $C$ and using $L(s)=Ce^{B(T-s)}$.
\end{proof}
Throughout, we assume that $\Sigma(t)$ is positive definite for all $t<T$. This holds if and only if
\begin{equation}\label{eq:rank-condition}
  \operatorname{rank}\big[\,Cq,\ CBq,\ CB^2q,\ \ldots,\ CB^{d-1}q\,\big]=p .
\end{equation}
See for instance \cite{HermesLaSalle1969} (page 74). 
For the models with block structure that we consider in Section \ref{sec:block} positive definiteness holds by Lemma \ref{lem:kron}(3).

 \begin{defn}\label{def:g}
For $p\ge2$ the guiding function is defined by
\[ g(t,z) = \int_0^\infty R^{p-1} \phi(R u; m(t,z), \Sigma(t)) \dd R. \]
 	 \end{defn}
The term $R^{p-1}$ is the Jacobian of polar coordinates $x=Ru$, with $R>0$ and $u\in S^{p-1}$:
$\dd x=R^{p-1}\dd R\,\varsigma(\dd u)$, where $\varsigma$ is the surface measure on $S^{p-1}$; these are the usual
polar coordinates for $p=2$ and spherical coordinates for $p=3$. Hence $u\mapsto g_u(t,z)$ is the density of the
direction $X_T/\|X_T\|$ given $Z_t=z$ with respect to $\varsigma$; for $p=2$, writing $u=(\cos\theta,\sin\theta)$, it
is the density of the angle $\theta$.
We will derive a simplified expression for the guiding function, as well as the gradient of its logarithm. For that, the following definition is useful.
\begin{defn}
For $n\ge 0$ define the moments of a half-line-truncated normal
distribution by
\[
  M_n(\mu,\sigma) := \int_0^\infty z^n\,\phi(z;\mu,\sigma^2)\dd z .
\]
\end{defn}
In particular,
\begin{equation}\label{eq: M0 and M1}
  M_0(\mu,\sigma)=\Phi(\mu/\sigma), \qquad
  M_1(\mu,\sigma)=\mu\,\Phi(\mu/\sigma)+\sigma\,\varphi(\mu/\sigma).
\end{equation}
A recursion for $M_n$ and its derivative with respect to $\mu$ are given in Lemmas \ref{lem:recursion} and \ref{lem:score} in the appendix.

\begin{defn}
	Define the inner-product induced by $\Sigma(t)$ by $\langle x, y\rangle_{t} = x^\T \Sigma(t)^{-1} y$. Set $\|x\|_t^2 = \langle x, x\rangle_t$.
\end{defn}
\begin{thm}\label{thm:general case g and nabla log g}
Let $g$ be as  defined in Definition \ref{def:g}.
If we define
\[ \xi_t = \frac1{\|u\|_t} \qquad  \mu(t,z)=\frac{\langle m(t,z), u\rangle_t}{\|u\|_t^2} \qquad \Pi_t= I_p-\frac{uu^\T \Sigma(t)^{-1}}{\|u\|_t^2}, \]
then
\[ \log g(t,z) =  C(t) -\frac12 \|\Pi_t m(t,z)\|_t^2 + \log M_{p-1}(\mu(t,z), \xi_t), \]
where $C(t)=(-p/2+1/2)\log (2\pi) -\frac12 \log |\Sigma(t)| - \log  \|u\|_t$ and
\[
  \nabla_z \log g(t,z)
  = L(t)^\T \Sigma(t)^{-1}
  \left[
    (p-1) \xi_t^2\,\frac{M_{p-2}(\mu(t,z),\xi_t)}{M_{p-1}(\mu(t,z),\xi_t)}\,  u
    \;-\; \Pi_t m(t,z)
  \right].
\]
\end{thm}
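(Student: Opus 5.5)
The plan is to complete the square in $R$ inside the Gaussian exponent, which reduces $g$ to a half-line truncated moment, and then to differentiate the resulting expression in $z$ through $m(t,z)=L(t)z$.

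\textbf{Step 1: decompose the quadratic form.} Write $m=m(t,z)$ and $\Sigma=\Sigma(t)$. Then
\[
\|Ru-m\|_t^2 = R^2\|u\|_t^2 - 2R\langle m,u\rangle_t + \|m\|_t^2 = \|u\|_t^2\,(R-\mu)^2 + \|m\|_t^2 - \frac{\langle m,u\rangle_t^2}{\|u\|_t^2}.
\]
The remainder should equal $\|\Pi_t m\|_t^2$. Note that $\Pi_t$ is the $\Sigma^{-1}$-orthogonal projection onto the $\langle\cdot,\cdot\rangle_t$-orthogonal complement of $u$. Indeed $\Pi_t m = m-\mu u$, and $\langle m-\mu u,u\rangle_t=0$, so Pythagoras gives
\[
\|\Pi_t m\|_t^2=\|m\|_t^2-\mu^2\|u\|_t^2 = \|m\|_t^2-\frac{\langle m,u\rangle_t^2}{\|u\|_t^2}.
\]

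\textbf{Step 2: identify the integral.} Substituting into the density gives
\[
\phi(Ru;m,\Sigma)=(2\pi)^{-p/2}|\Sigma|^{-1/2}e^{-\frac12\|\Pi_t m\|_t^2}\exp\!\Big(-\frac{(R-\mu)^2}{2\xi_t^2}\Big).
\]
The last factor equals $\sqrt{2\pi}\,\xi_t\,\phi(R;\mu,\xi_t^2)$. Hence
\[
g=(2\pi)^{-p/2+1/2}|\Sigma|^{-1/2}\|u\|_t^{-1}e^{-\frac12\|\Pi_t m\|_t^2}M_{p-1}(\mu,\xi_t),
\]
and taking logarithms yields the stated formula with the given $C(t)$.

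\textbf{Step 3: the gradient.} Only $m$ depends on $z$, and $\nabla_z m^\T=L(t)^\T$. So it suffices to compute the gradient with respect to $m$ and premultiply by $L^\T$.

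For the projection term, since $\Pi_t m$ is linear in $m$ and $\Sigma^{-1}$-self-adjoint and idempotent,
\[
\nabla_m\tfrac12\|\Pi_t m\|_t^2=\Pi_t^\T\Sigma^{-1}\Pi_t m=\Sigma^{-1}\Pi_t m.
\]
The key identity here is $\Pi_t^\T\Sigma^{-1}=\Sigma^{-1}\Pi_t$, which follows directly from the definition of $\Pi_t$.

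For the $M$-term, $\nabla_m\mu=\Sigma^{-1}u/\|u\|_t^2=\xi_t^2\Sigma^{-1}u$. It therefore remains to show
\[
\partial_\mu M_{p-1}(\mu,\xi)=(p-1)\,M_{p-2}(\mu,\xi).
\]
This is the content of Lemma \ref{lem:score}. Its derivation is as follows: differentiating under the integral uses $\partial_\mu\phi(z;\mu,\xi^2)=-\partial_z\phi(z;\mu,\xi^2)$, and integrating by parts gives the identity. The boundary term at $0$ vanishes because $z^{p-1}=0$ there when $p\ge2$.

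Combining these pieces, with $\Sigma^{-1}$ factored out, gives the stated expression for $\nabla_z\log g$.

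\textbf{Main obstacle.} The only delicate points are the projection identity in Step 3 and the boundary term in the integration by parts. Both are straightforward for $p\ge2$, so the proof is essentially bookkeeping.
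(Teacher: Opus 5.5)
Your proposal is correct and follows the paper's proof essentially step for step: the Pythagorean decomposition $\|Ru-m\|_t^2=(R-\mu)^2\|u\|_t^2+\|\Pi_t m\|_t^2$, the reduction of the $R$-integral to $M_{p-1}(\mu,\xi_t)$, and the chain rule through $m(t,z)=L(t)z$ using $\Sigma(t)^{-1}\Pi_t=\Pi_t^\top\Sigma(t)^{-1}$, $\Pi_t^2=\Pi_t$ and $\partial_\mu M_n=nM_{n-1}$. The only inessential difference is that you prove this score identity by integrating by parts directly with $\partial_\mu\phi=-\partial_z\phi$ (the boundary term vanishes since $p-1\ge1$), whereas the paper's Lemma~\ref{lem:score} goes through the recursion of Lemma~\ref{lem:recursion}; both routes are valid.
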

\begin{proof}
In the derivation we omit dependence on $t$ and $z$ where convenient.
The definitions of $\mu$ and $\Pi_t$ imply that
 $m=\mu u +\Pi_t m$. As $\langle \Pi_tm, u\rangle_t=0$ by construction,  this implies, by the Pythagorean theorem in the inner product $\langle\cdot,\cdot\rangle_t$,
\[ \|Ru-m\|_t^2 =(R-\mu)^2 \|u\|_t^2 + \|\Pi_t m\|_t^2.\]
This gives
\begin{align*}
	g(t,z)&= \int_0^\infty R^{p-1} (2\pi)^{-p/2} |\Sigma(t)|^{-1/2} \exp\left(-\frac12\|Ru-m\|^2_t\right) \dd R\\ &= (2\pi)^{-p/2} |\Sigma(t)|^{-1/2}(2\pi\xi_t^2)^{1/2} \exp\left(-\frac12 \|\Pi_t m\|_t^2\right) \int_0^\infty R^{p-1} \phi(R;\mu,\xi_t^2)\dd R\\
	&= e^{C(t)} \exp\left(-\frac12 \|\Pi_t m\|_t^2\right) M_{p-1}(\mu, \xi_t).
\end{align*}
Taking the logarithm gives the first claim.
For deriving the gradient of the log of $g(t,z)$, define
\[ \Psi(t,y) = -\frac1{2} \|\Pi_t y\|_t^2 + \log M_{p-1}(\xi_t^2\langle u, y\rangle_t , \xi_t) + C(t), \]
then $\log g(t,z)=\Psi(t,m(t,z))$. The matrix $\Pi_t$ is idempotent and self-adjoint with respect to $\langle\cdot,\cdot\rangle_t$, that is, $\Sigma(t)^{-1}\Pi_t=\Pi_t^\T\Sigma(t)^{-1}$. Hence
\[ \nabla_y\left(-\frac12\|\Pi_t y\|_t^2\right) = -\Pi_t^\T\Sigma(t)^{-1}\Pi_t y = -\Sigma(t)^{-1}\Pi_t^2 y = -\Sigma(t)^{-1}\Pi_t y.\]
Together with Lemma \ref{lem:score} this gives
\[ \nabla_y \Psi(t,y) = \Sigma(t)^{-1}\left(-\Pi_t y + (p-1) \xi_t^2 \frac{M_{p-2}(\xi_t^2\langle u, y\rangle_t, \xi_t)}{M_{p-1}(\xi_t^2\langle u, y\rangle_t, \xi_t)}u\right). \]
By the chain rule
\[ \nabla_z \log g(t,z)= (D_z m(t,z))^\T\left. \nabla_y \Psi(t,y)\right|_{y=m(t,z)}. \]
By Lemma \ref{lem:condmeancov} $D_z m(t,z)=L(t)$. The result follows since $\xi_t^2\langle u, m(t,z)\rangle_t=\mu(t,z)$.
\end{proof}

\subsection{The isotropic case}
In many models from tracking, the following simplifying assumption holds. 
\begin{ass}\label{ass:simplifying}
$B$, $Q$ and $C$ are such that $\Sigma(t)=\sigma(t)^2  I_p$.
\end{ass}
Under Assumption \ref{ass:simplifying}, $\langle x,y\rangle_t=\sigma(t)^{-2}\langle x,y\rangle$, and therefore $\xi_t=\sigma(t)$,
\[ \mu(t,z) = \langle m(t,z),u\rangle \qquad \text{and} \qquad \Pi_t=\Pi:=I_p-uu^\T, \]
the orthogonal projection matrix onto $\mbox{span}\{u\}^\perp$.  Define
\[
  e_\perp(t,z) = \frac{\Pi m(t,z)}{\|\Pi m(t,z)\|}.
\]
Note that $e_\perp$, $\mu$, $\Pi$ all depend on $u$, though this is omitted from the notation as the vector $u$ is considered fixed.
We often omit the arguments $(t,z)$ for notational convenience.
 Note that for the special case $p=2$, we have $\Pi m = \langle m, u_\perp\rangle u_\perp$ with  $u_\perp$ a unit vector  orthogonal to $u$. In that case
 \[ e_\perp = \frac{\langle m, u_\perp\rangle}{|\langle m, u_\perp\rangle|} u_\perp = \mbox{sign}(\langle m, u_\perp\rangle) u_\perp. \]
Theorem \ref{thm:general case g and nabla log g} then specialises as follows.

\begin{cor}\label{thm:gradlog_g} Under Assumption \ref{ass:simplifying},
for $g$ defined in Definition \ref{def:g} we have
\[ \log g(t,z) =  -\frac1{2\sigma(t)^2} \|\Pi m(t,z)\|^2 + \log  M_{p-1}(\mu(t,z), \sigma(t)) + C(t), \] where $C(t)=\frac{1-p}{2}\log(2\pi \sigma(t)^2)$, and
\[
  \nabla_z \log g(t,z)
  = L(t)^\T
  \left[
    (p-1)\,\frac{M_{p-2}(\mu(t,z),\sigma(t))}{M_{p-1}(\mu(t,z),\sigma(t))}\, u
    \;-\; \frac{\|\Pi m(t,z)\|}{\sigma(t)^2}\, e_\perp(t,z)
  \right].
\]
\end{cor}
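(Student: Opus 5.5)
The plan is to specialise Theorem \ref{thm:general case g and nabla log g} directly by substituting $\Sigma(t)=\sigma(t)^2 I_p$ into each quantity appearing there. First I record the elementary simplifications under Assumption \ref{ass:simplifying}, already noted before the statement. Since $\|u\|=1$, we have $\|u\|_t^2=\sigma(t)^{-2}$, so $\xi_t=\sigma(t)$. Next, $\mu(t,z)=\sigma(t)^{-2}\langle m,u\rangle/\sigma(t)^{-2}=\langle m,u\rangle$. Finally, $\Pi_t=I_p-uu^\T\sigma(t)^{-2}/\sigma(t)^{-2}=I_p-uu^\T=\Pi$. With these in hand, everything reduces to bookkeeping of constants and factors of $\sigma(t)$.

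For the expression for $\log g$, I will use $\|\Pi m\|_t^2=\sigma(t)^{-2}\|\Pi m\|^2$ together with the term $\log M_{p-1}(\mu,\sigma(t))$. The constant from the theorem becomes
\[
C(t)=\tfrac{1-p}{2}\log(2\pi)-\tfrac12\log\sigma(t)^{2p}-\log\sigma(t)^{-1}.
\]
The last two terms combine to $-p\log\sigma(t)+\log\sigma(t)=(1-p)\log\sigma(t)=\tfrac{1-p}{2}\log\sigma(t)^2$. Adding $\tfrac{1-p}{2}\log(2\pi)$ then gives $\tfrac{1-p}{2}\log(2\pi\sigma(t)^2)$, as claimed.

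For the gradient, I substitute $\Sigma(t)^{-1}=\sigma(t)^{-2}I_p$ and $\xi_t^2=\sigma(t)^2$ into the bracket of the theorem. The factor $\sigma(t)^{-2}\cdot\sigma(t)^2$ cancels in front of the $M_{p-2}/M_{p-1}$ term, which leaves $(p-1)\frac{M_{p-2}}{M_{p-1}}u$. The remaining term is $-\sigma(t)^{-2}\Pi m$. I then write $\Pi m=\|\Pi m\|\,e_\perp$ using the definition of $e_\perp$.

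The only delicate point is that $e_\perp$ is undefined when $\Pi m(t,z)=0$. In that case the term is simply $0$, so I will interpret $\|\Pi m\|e_\perp$ as $\Pi m$, which is consistent. No other obstacle is expected, since all the analytic work (the Pythagorean decomposition and the score of $M_{p-1}$ from Lemma \ref{lem:score}) is already contained in Theorem \ref{thm:general case g and nabla log g}.
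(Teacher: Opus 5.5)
Your proposal is correct and matches the paper's own proof: it substitutes $\Sigma(t)=\sigma(t)^2I_p$, $\xi_t=\sigma(t)$ and $\Pi_t=\Pi$ into Theorem~\ref{thm:general case g and nabla log g} and writes $\Pi m=\|\Pi m\|\,e_\perp$. Your explicit computation of $C(t)$ and your remark that, when $\Pi m(t,z)=0$, the term $\|\Pi m\|e_\perp$ should be read as $\Pi m$ are both correct details that the paper leaves implicit.
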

\begin{proof}
Substitute $\Sigma(t)=\sigma(t)^2I_p$, $\xi_t=\sigma(t)$ and $\Pi_t=\Pi$ in Theorem \ref{thm:general case g and nabla log g}, and rewrite $\Pi m(t,z)= \|\Pi m(t,z)\| e_\perp(t,z)$.
\end{proof}

\section{Linear model with a block structure}\label{sec:block}
In this section we specialise our result to a specific case. 
For tracking models, often the following assumption is satisfied.
\begin{ass}[Block structure]\label{ass:kron}
There is a $k\ge2$ with $d=kp$ such that, writing $Z_t$ in $k$ blocks
of size $p$,
\[
  B = \hat B\otimes I_p, \qquad q = e_k\otimes I_p, \qquad C = e_1^\top\otimes I_p,
\]
for some $\hat B\in\RR^{k\times k}$ with $e_1^\T\hat B^je_k\neq0$ for some $j\in\{0,\ldots,k-1\}$.
Thus the noise enters only the last block, isotropically in $\RR^p$, the coupling
coefficients between blocks are scalars applied identically to every spatial coordinate,
and the noise reaches the position. The latter condition holds in particular when the
model is a chain of $k$ integrators (position, velocity, and possibly higher derivatives),
that is, when $\hat B=\sum_{j=1}^{k-1}e_je_{j+1}^\T+e_k\hat B_{k,:}$ for a row vector $\hat B_{k,:}\in\RR^{1\times k}$.
\end{ass}
For $z=(z^{(1)},\ldots,z^{(k)})$ split into $p$-dimensional blocks and $a\in\RR^k$ we write $a^\T z:=\sum_{j=1}^k a_j z^{(j)}$.

\subsection{The guiding term}\label{sec:block-guiding}

\begin{lem}\label{lem:kron}
Impose Assumption \ref{ass:kron}. Define the vector $\hat{c}(s)$ by
\[\hat c(s)^\top := e_1^\top e^{\hat Bs}. \] Let $\hat{c}_j(s)$ denote the $j$-th element of $\hat{c}(s)$ and write $\tau=\tau(t)$.
We have
\begin{enumerate}
	\item $L(t)= \hat{c}(\tau)^\top \otimes I_p$;
	\item $QL(t)^\T = \hat{c}_k(\tau) q$;
	\item $\Sigma(t)=\sigma(t)^2 I_p$ with \[
  \sigma(t)^2 = \int_0^{\tau} \hat c_k(s)^2\,\mathrm ds.
\]
Moreover, $\sigma(t)^2>0$ for all $t<T$, so the standing assumption that $\Sigma(t)$ is positive definite for $t<T$ holds automatically. In particular, Assumption \ref{ass:kron} implies Assumption \ref{ass:simplifying};
\item if $z=(z^{(1)}, z^{(2)},\ldots, z^{(k)})$ is split into $p$-dimensional blocks, then
 \[ \mu(t,z)=\sum_{j=1}^k \hat{c}_j(\tau) \langle u, z^{(j)}\rangle. \]
 In particular $\mu(t,z)$ depends on $z$ only via $(I_k\otimes uu^\T)z$;
\item for any $P\in\RR^{p\times p}$, $(I_k \otimes P)\, B = B\, (I_k \otimes P)$ and $(I_k\otimes P)\,q = q\,P$.
\end{enumerate}
\end{lem}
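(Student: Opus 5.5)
The whole argument rests on the mixed-product rule for Kronecker products, $(A\otimes B)(C\otimes D)=AC\otimes BD$, together with its consequence for exponentials. Since $B=\hat B\otimes I_p$ and $(\hat B\otimes I_p)^n=\hat B^n\otimes I_p$, summing the exponential series gives $e^{B\tau}=e^{\hat B\tau}\otimes I_p$. With this identity in hand, each item is a short calculation, and I would treat them in order.

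\emph{Items (1) and (2).} For (1), write $L(t)=Ce^{B\tau}=(e_1^\T\otimes I_p)(e^{\hat B\tau}\otimes I_p)=(e_1^\T e^{\hat B\tau})\otimes I_p=\hat c(\tau)^\T\otimes I_p$. For (2), first note that $Q=qq^\T=e_ke_k^\T\otimes I_p$. Then
\[
QL(t)^\T=(e_ke_k^\T\otimes I_p)(\hat c(\tau)\otimes I_p)=(e_k\, e_k^\T\hat c(\tau))\otimes I_p=\hat c_k(\tau)\,(e_k\otimes I_p)=\hat c_k(\tau)\,q .
\]

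\emph{Item (3).} By Lemma \ref{lem:condmeancov}, $\Sigma(t)=\int_t^T L(s)QL(s)^\T\dd s$. Using (2), the integrand is
\[
L(s)QL(s)^\T=\hat c_k(T-s)\,(\hat c(T-s)^\T\otimes I_p)(e_k\otimes I_p)=\hat c_k(T-s)^2 I_p .
\]
The substitution $s\mapsto T-s$ then gives the stated formula for $\sigma(t)^2$.

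For positivity, suppose $\sigma(t)^2=0$ for some $t<T$. Since $\hat c_k$ is continuous and its square integrates to zero, $\hat c_k\equiv 0$ on $[0,\tau]$. But $\hat c_k(s)=e_1^\T e^{\hat Bs}e_k$ is real-analytic, so it then vanishes identically. Its derivatives at $0$ are $e_1^\T\hat B^je_k$, so all of these would be zero, contradicting the assumption that $e_1^\T\hat B^je_k\neq0$ for some $j$. Hence $\sigma(t)^2>0$ for $t<T$, and Assumption \ref{ass:simplifying} follows immediately. This analyticity step is the only non-mechanical point in the lemma, and even it is routine.

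\emph{Item (4).} In the isotropic case $\mu=\langle m,u\rangle$. By (1), $m=L(t)z=\sum_j\hat c_j(\tau)z^{(j)}$, which gives the stated formula for $\mu(t,z)$. For the final remark, note that $\langle u,z^{(j)}\rangle=\langle u,uu^\T z^{(j)}\rangle$, and $uu^\T z^{(j)}$ is exactly the $j$-th block of $(I_k\otimes uu^\T)z$.

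\emph{Item (5).} By the mixed-product rule, $(I_k\otimes P)(\hat B\otimes I_p)=\hat B\otimes P=(\hat B\otimes I_p)(I_k\otimes P)$. Similarly, $(I_k\otimes P)(e_k\otimes I_p)=e_k\otimes P=(e_k\otimes I_p)(1\otimes P)=qP$.
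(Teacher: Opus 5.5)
Your proof is correct and follows the paper's argument essentially step by step: the mixed-product rule, the identity $e^{B\tau}=e^{\hat B\tau}\otimes I_p$, and analyticity of $\hat c_k$ for positivity of $\sigma(t)^2$. The only differences are cosmetic. You obtain $L(s)QL(s)^\top=\hat c_k(T-s)^2I_p$ by multiplying item (2) on the left by $L(s)$, where the paper uses idempotency of $Q$, and you argue positivity by contradiction rather than via isolated zeros of the analytic function $\hat c_k$.
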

The proof is given in the appendix.
As a consequence, the guiding term only acts on the last block and is the same for all models satisfying Assumption \ref{ass:kron}, up to the scalar $\hat c_k(\tau)$ and the functions $\mu$ and $\sigma$.

\begin{cor}\label{cor:guided-block}
Under Assumption \ref{ass:kron}, write $Z_t^\star=(Z_t^{\star(1)},\ldots,Z_t^{\star(k)})$ in blocks of size $p$ and define
\[ \alpha(t,z)= (p-1)\,\frac{M_{p-2}(\mu(t,z),\sigma(t))}{M_{p-1}(\mu(t,z),\sigma(t))}\, u
    \;-\; \frac{\|\Pi m(t,z)\|}{\sigma(t)^2}\, e_\perp(t,z) . \]
Then the conditioned process $Z^\star$ of \eqref{eq:Zcirc} satisfies
\[ \dd Z_t^\star = \left[BZ_t^\star + \hat c_k(\tau(t))\,q\,\alpha(t,Z_t^\star)\right]\dd t + q\,\dd W_t^\star. \]
If $\hat B$ is a chain of integrators, this reads blockwise
$\dd Z_t^{\star(j)} = Z_t^{\star(j+1)}\dd t$ for $j<k$ and
$\dd Z_t^{\star(k)} = [\hat B_{k,:}Z_t^\star + \hat c_k(\tau(t))\,\alpha(t,Z_t^\star)]\dd t + \dd W_t^\star$, where $\hat B_{k,:}Z_t^\star:=\sum_{j=1}^k\hat B_{kj}Z^{\star(j)}_t$.
\end{cor}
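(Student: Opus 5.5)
The plan is to substitute the output of Corollary \ref{thm:gradlog_g} directly into the SDE \eqref{eq:Zcirc} and then simplify the product $Q\nabla_z\log g$ using Lemma \ref{lem:kron}. First, by Lemma \ref{lem:kron}(3), Assumption \ref{ass:kron} implies Assumption \ref{ass:simplifying}, so Corollary \ref{thm:gradlog_g} applies and gives
\[ r(t,z)=\nabla_z\log g(t,z)=L(t)^\T\alpha(t,z), \]
with $\alpha$ exactly as in the statement (the bracketed vector in Corollary \ref{thm:gradlog_g}). The drift of $Z^\star$ in \eqref{eq:Zcirc} is therefore $BZ^\star_t+QL(t)^\T\alpha(t,Z^\star_t)$.

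Next I invoke Lemma \ref{lem:kron}(2), $QL(t)^\T=\hat c_k(\tau)q$, to rewrite this guiding term as $\hat c_k(\tau(t))\,q\,\alpha(t,Z_t^\star)$. This yields the first displayed SDE. For completeness one can verify (2) directly: $L(t)^\T=\hat c(\tau)\otimes I_p$ by (1), and $Q=(e_ke_k^\T)\otimes I_p$, so $QL^\T=(e_ke_k^\T\hat c(\tau))\otimes I_p=\hat c_k(\tau)(e_k\otimes I_p)$.

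For the blockwise form, I write out $B=\hat B\otimes I_p$ with $\hat B=\sum_{j<k}e_je_{j+1}^\T+e_k\hat B_{k,:}$. Then the $j$-th block of $Bz$ equals $z^{(j+1)}$ for $j<k$, and equals $\sum_j\hat B_{kj}z^{(j)}=\hat B_{k,:}z$ for $j=k$. Since $q=e_k\otimes I_p$, the vectors $q\alpha$ and $q\,\dd W^\star_t$ are nonzero only in the last block, where they equal $\alpha$ and $\dd W^\star_t$. Reading off the blocks gives the stated equations.

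There is no genuine obstacle here. The only point needing care is the bookkeeping of Kronecker products, namely confirming that $q$ embeds $\RR^p$ into the last block, so that the guiding term acts only there.
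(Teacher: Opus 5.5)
Your proposal is correct and follows the paper's proof essentially step for step. Like the paper, you obtain $r=L(t)^\T\alpha$ from Corollary \ref{thm:gradlog_g}, convert $QL(t)^\T$ into $\hat c_k(\tau)q$ via Lemma \ref{lem:kron}(2), and read off the blocks of $(\hat B\otimes I_p)Z_t^\star$ and $q=e_k\otimes I_p$; you additionally make explicit the appeal to Lemma \ref{lem:kron}(3), which the paper leaves implicit.
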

\begin{proof}
By Corollary \ref{thm:gradlog_g}, $r(t,z)=L(t)^\T\alpha(t,z)$, so $Qr(t,z)=QL(t)^\T\alpha(t,z)=\hat c_k(\tau)\,q\,\alpha(t,z)$ by Lemma \ref{lem:kron}(2). Since $q=e_k\otimes I_p$, this vector is nonzero only in block $k$, where it equals $\hat c_k(\tau)\alpha(t,z)$. The blockwise form follows by writing out $BZ_t^\star=(\hat B\otimes I_p)Z_t^\star$.
\end{proof}

Thus, under the block structure assumption we can easily identify the dynamics of the conditioned process by simply identifying $\hat B$ and computing the first row of $e^{\hat B s}$.

\subsection{Examples}\label{sec:examples}
All examples below satisfy Assumption~\ref{ass:kron} with $\hat B$ a chain of integrators; $\hat B_{k,:}$ denotes the last row of $\hat B$. The relevant quantities are summarised in Table \ref{tab:examples}.

\begin{table}
\centering\small
\begin{tabular}{lcccll}
\toprule
Model & $p$ & $k$ & $\hat B_{k,:}$ & $\hat c(s)^\T$ & $\sigma(t)^2$ \\
\midrule
Nearly-constant-velocity & 2 & 2 & $(0,0)$ & $(1,\ s)$ & $\tau^3/3$ \\
Singer & 2 & 3 & $(0,0,-\vartheta)$ & $(1,\ s,\ \zeta_\vartheta(s))$ & $\int_0^\tau \zeta_\vartheta(s)^2\dd s$ \\
Nearly-constant-acceleration & 2 & 3 & $(0,0,0)$ & $(1,\ s,\ s^2/2)$ & $\tau^5/20$ \\
Constant-turn & 3 & 3 & $(0,-\omega^2,0)$ & $(1,\ \sin(\omega s)/\omega,\ k_\omega(s))$ & $\int_0^\tau k_\omega(s)^2\dd s$ \\
\bottomrule
\end{tabular}
\caption{Block structure of the example models. Here $\tau=\tau(t)$; $\zeta_\vartheta$ and $k_\omega$ are defined in Examples \ref{ex:singer} and \ref{ex:ct}. By Corollary \ref{cor:guided-block}, the guiding term is $\hat c_k(\tau)\vartheta(t,z)$ in the last block, with $\hat c_k$ the last entry of $\hat c$.}\label{tab:examples}
\end{table}

\begin{ex}[Planar nearly-constant-velocity model]\label{ex:Nearly-constant-velocity}
Here $Z=(X,V)$ and
\[
\dd X_t = V_t \dd t \qquad  \dd V_t =  \dd W_t,
\]
which corresponds to \eqref{eq:Z} with $f\equiv0$ and
\[
  B=\begin{pmatrix}0_2&I_2\\0_2 &0_2\end{pmatrix}, \qquad
  q=\begin{pmatrix}0_2\\I_2\end{pmatrix}.
\]
Thus $p=2$, $k=2$ and $\hat{B} =\Bm 0 & 1 \\ 0 & 0\Em$.
Since $\hat{B}$ is nilpotent, $e^{\hat{B} s}=I+\hat{B} s$, giving $\hat{c}(s)^\T= \Bm 1 & s\Em$.
Therefore $QL(t)^\T = \tau(t) q$, $\sigma(t)^2=\int_{0}^{\tau(t)} s^2 \dd s = \tau(t)^3/3$ and $\mu(t,z)=\langle x+\tau(t) v, u\rangle$.
The guiding term is nonzero only in the
$V$-coordinate and equal to
\[
  \tau\left[\frac{M_0(\mu,\sigma)}{M_1(\mu,\sigma)}\, u
  \;-\; \frac{\|\Pi m\|}{\sigma^2}\, e_\perp\right].
\]
This model is studied in detail in Section \ref{sec:ncv}.
\end{ex}

\begin{ex}[Planar Singer model]\label{ex:singer} Here,
$Z=(X,V,A)$ and
\[
\dd X_t = V_t \dd t \qquad  \dd V_t = A_t \dd t \qquad  \dd A_t = -\vartheta A_t\dd t +  \dd W_t,
\]
which corresponds to \eqref{eq:Z} with $f\equiv0$ and
\[
  B=\begin{pmatrix}0_2&I_2 & 0_2\\0_2 & 0_2 &I_2\\ 0_2& 0_2& -\vartheta I_2\end{pmatrix}, \qquad
  q=\begin{pmatrix}0_2\\ 0_2\\I_2\end{pmatrix}.
\]
Thus $p=2$, $k=3$ and $\hat B_{k,:}=(0,0,-\vartheta)$.
The matrix  $\hat B$ is not nilpotent, but $\hat c(s)^\T=\big(1,\ s,\
\zeta_\vartheta(s)\big)$ with
\[ \zeta_\vartheta(s)=  \frac{\vartheta s - 1 + e^{-\vartheta s}}{\vartheta^2},
\]
as is verified directly from the ordinary differential equation $\frac{\dd}{\dd s}\hat c(s)^\T=\hat c(s)^\T\hat B$, $\hat c(0)=e_1$.
This gives  $m(t,z)=x+\tau v+\zeta_\vartheta(\tau)a$ and, by Lemma \ref{lem:kron},
\[
\Sigma(t) = \sigma(t)^2 I_2 \qquad \text{with} \quad \sigma(t)^2=\int_0^{\tau(t)}  \zeta_\vartheta(s)^2 \dd s.
\]
 Note that $\zeta_\vartheta(s) = s^2/2 + \scr{O}(s^3)$ as $s \downarrow 0$ and therefore $\sigma(t)^2 = \frac{\tau^5}{20} +\scr{O}(\tau^6)$, $\tau \downarrow 0$.
 The guiding term is
nonzero only in the $A$-coordinate,
\[
  \zeta_\vartheta(\tau)\left[\frac{M_0(\mu,\sigma)}{M_1(\mu,\sigma)}\, u
  \;-\; \frac{\|\Pi m\|}{\sigma^2}\, e_\perp\right] .
\]
The specific case  $\vartheta=0$ is referred to as the nearly-constant-acceleration model. Then $\zeta_0(s)=s^2/2$ and $\sigma(t)^2=\tau^5/20$.
\end{ex}

\begin{ex}[Constant-turn model in $\RR^3$]\label{ex:ct}
Here, $Z=(X,V,A)$ and
\[ \dd X_t = V_t \dd t\qquad  \dd V_t = A_t \dd t\qquad  \dd A_t = -\omega^2 V_t\dd t +  \dd W_t,\]
which corresponds to \eqref{eq:Z} with $f\equiv0$ and
\[
  B=\begin{pmatrix}0_3&I_3 & 0_3\\0_3 & 0_3 &I_3\\ 0_3& -\omega^2 I_3& 0_3\end{pmatrix}, \qquad
  q=\begin{pmatrix}0_3\\ 0_3\\ I_3\end{pmatrix}.
\]
Thus $p=3$, $k=3$ and $\hat B_{k,:}=(0,-\omega^2,0)$.
 Define
\[ k_\omega(s) = \frac{1-\cos(\omega s)}{\omega^2}.\]
Then $\hat c(s)^\T=\big(1,\ \tfrac{\sin(\omega s)}{\omega},\
k_\omega(s)\big)$, and we get
$m(t,z)=x+\tfrac{\sin(\omega\tau)}{\omega}v+k_\omega(\tau)a$. Moreover, by Lemma \ref{lem:kron}, Assumption \ref{ass:simplifying} is satisfied with $\sigma(t)^2=\int_0^{\tau(t)}k_\omega(s)^2 \dd s$.
 The guiding
term is nonzero only in the $A$-coordinate and equal to
\[
  k_\omega(\tau)\left[2\,\frac{M_1(\mu,\sigma)}{M_2(\mu,\sigma)}\, u
  \;-\; \frac{\|\Pi m\|}{\sigma^2}\, e_\perp\right].
\]
\end{ex}

\subsection{Radial and angular processes}\label{sec:radial-angular}

Under Assumption \ref{ass:kron}, the conditioned process decomposes into independent angular and radial conditioned processes. We start with defining these processes. {\it Warning on notation: }The angular process is denoted by ``A'', not to be confused with acceleration. 
\begin{defn}
Define the angular conditioned process by
\[
  A^{\star}_t := (I_k\otimes\Pi)\,Z_t^\star \in\RR^{kp}, \qquad
  A^{\star}_t=(A_t^{\star(1)},\ldots,A_t^{\star(k)}), \quad A_t^{\star(j)}=\Pi\,Z_t^{\star(j)}\in u^\perp\subset\RR^p.
\]
In particular $A_t^{\star(1)} = \Pi X_t^\star = \Pi C Z_t^\star$.
Let $\Pi^{\rad}:=uu^\T$. Define the radial conditioned process by 
\[
  R^{\star}_t := (I_k\otimes\Pi^{\rad})\,Z_t^\star \in\RR^{kp}, \qquad
  R^{\star}_t=(R_t^{\star(1)},\ldots,R_t^{\star(k)}), \quad R_t^{\star(j)}\in\mathrm{span}(u)\subset\RR^p.
\]
\end{defn}

\begin{thm}\label{thm:closed-system}
Under Assumption~\ref{ass:kron}, $A^{\star}_t$ satisfies the
closed, autonomous linear SDE
\[ \dd A^{\star}_t = \left[BA^{\star}_t - \frac{\hat c_k(\tau)}{\sigma(t)^2}\,q\,\hat c(\tau)^\T A^{\star}_t\right]\dd t + q\,\dd W^{\star\ang}_t , \]
where $W^{\star\ang}_t:=\Pi W_t^\star$ is a (degenerate, rank-$(p-1)$)
Brownian motion valued in $u^\perp\subset\RR^p$, with $\mathrm{Cov}(W^{\star\ang}_t)=\Pi\,t$.

 $R^{\star}_t$ satisfies the closed, autonomous (but nonlinear) SDE
\[ \dd R^{\star}_t = \left[BR^{\star}_t + \hat c_k(\tau)\,(p-1)\frac{M_{p-2}(\mu,\sigma)}{M_{p-1}(\mu,\sigma)}\,q\,u\right]\dd t + q\,\dd W^{\star\rad}_t , \]
where $\mu=\mu(t,R^{\star}_t)=\sum_{j=1}^k\hat c_j(\tau)\langle u,R_t^{\star(j)}\rangle$, $\sigma=\sigma(t)$ and
$W_t^{\star\rad}:=\Pi^\rad W_t^\star$ is a (degenerate, rank-$1$) Brownian motion valued in
$\mathrm{span}(u)\subset\RR^p$, with $\mathrm{Cov}(W^{\star\rad}_t)=\Pi^\rad\, t$. Moreover $W_t^{\star\rad}$ is independent of  $W^{\star\ang}_t$, so $R^{\star}_t$ and $A^{\star}_t$ are driven by
independent noise and neither system's SDE references the other.

\end{thm}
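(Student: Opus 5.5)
The plan is to apply the projections $I_k\otimes\Pi$ and $I_k\otimes\Pi^{\rad}$ directly to the SDE of Corollary \ref{cor:guided-block}, and then use Lemma \ref{lem:kron}(5) to move them through $B$ and $q$. With $P=\Pi$ this gives
\[ \dd A^\star_t = \left[BA^\star_t + \hat c_k(\tau)\,q\,\Pi\alpha(t,Z^\star_t)\right]\dd t + q\,\Pi\,\dd W^\star_t, \]
and the analogous equation holds with $\Pi^{\rad}$ in place of $\Pi$. What remains is to compute $\Pi\alpha$ and $\Pi^{\rad}\alpha$ and to write each as a function of the corresponding projected process alone.

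For the angular part, recall $\Pi u=0$ and $\Pi e_\perp=e_\perp$. Hence
\[ \Pi\alpha=-\sigma(t)^{-2}\|\Pi m\|e_\perp=-\sigma(t)^{-2}\Pi m(t,Z^\star_t). \]
By Lemma \ref{lem:kron}(1), $m(t,z)=\hat c(\tau)^\T z$ in the block notation. Therefore
\[ \Pi m=\hat c(\tau)^\T (I_k\otimes\Pi)z=\hat c(\tau)^\T A^\star_t, \]
which yields the stated linear drift. The point $\Pi m=0$, where $e_\perp$ is undefined, causes no difficulty, because the product $\|\Pi m\|e_\perp=\Pi m$ extends continuously there.

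For the radial part, $\Pi^{\rad}e_\perp=0$ because $e_\perp\perp u$, and $\Pi^{\rad}u=u$. So only the $M_{p-2}/M_{p-1}$ term survives. By Lemma \ref{lem:kron}(4), $\mu$ depends on $z$ only through $(I_k\otimes uu^\T)z=R^\star_t$, and in fact $\mu(t,Z^\star_t)=\mu(t,R^\star_t)$. This makes the radial system closed as well.

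For the noise, set $W^{\star\ang}=\Pi W^\star$ and $W^{\star\rad}=\Pi^{\rad}W^\star$. These are jointly Gaussian continuous martingales with quadratic covariations
\[ \Pi\Pi^\T t=\Pi t,\qquad \Pi^{\rad}t,\qquad \Pi\,\Pi^{\rad}\,t=0. \]
The cross-covariation vanishes since $\Pi uu^\T=0$. Joint Gaussianity together with zero cross-covariance gives independence, for example via L\'evy's characterisation applied to the pair.

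The main obstacle is rigour rather than algebra. The drift $\alpha$ blows up as $t\uparrow T$, so the statements should be understood on $[0,T)$, or on $[0,T-\epsilon]$ for every $\epsilon>0$. On each such interval the projected equations follow from It\^o's formula for a linear map. One further observation is needed: the coefficients of the two subsystems involve only their own state. Hence each subsystem, together with its own noise, determines its law, and the independence of the two noises then makes the processes $A^\star$ and $R^\star$ themselves independent. This uses pathwise uniqueness, which holds for the linear angular SDE; for the radial SDE it holds locally, since the ratio $M_{p-2}/M_{p-1}$ is smooth in $\mu$ for fixed $\sigma>0$.
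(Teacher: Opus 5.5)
Your proposal is correct and follows the paper's proof essentially step by step: project the SDE of Corollary~\ref{cor:guided-block} with $I_k\otimes\Pi$ and $I_k\otimes\Pi^{\rad}$, move the projections through $B$ and $q$ by Lemma~\ref{lem:kron}(5), evaluate $\Pi\alpha=-\sigma(t)^{-2}\Pi m$ and $\Pi^{\rad}\alpha$, close each system with Lemma~\ref{lem:kron}(1) and (4), and obtain independence of the noises from $\Pi\Pi^{\rad}=0$ and joint Gaussianity. Your closing pathwise-uniqueness argument goes slightly beyond what the paper's proof establishes, since it upgrades independence of the driving noises to independence of $A^\star$ and $R^\star$ themselves, and it is sound.
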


The proof of Theorem \ref{thm:closed-system} is given in the appendix. Since $Z_t^\star=R^{\star}_t+A^{\star}_t$, the conditioned process can be simulated by simulating the linear SDE for $A^{\star}$ and the one-dimensional-noise SDE for $R^{\star}$ independently.

\section{The planar nearly constant velocity model in detail}\label{sec:ncv}
We return to the model of Section \ref{sec:constant_velocity}. By Example \ref{ex:Nearly-constant-velocity}, $p=k=2$,
$\hat c(s)^\T=(1,s)$, $m(t,z)=x+\tau v$ and $\sigma(t)^2=\tau^3/3$, with $\tau=\tau(t)$. Let
$u=[\cos y_T,\sin y_T]^\T$ and $u_\perp=[-\sin y_T,\cos y_T]^\T$, so that $\Pi=u_\perp u_\perp^\T$ and
$\Pi^\rad=uu^\T$, and let $F(\mu,\sigma)=M_0(\mu,\sigma)/M_1(\mu,\sigma)$. The angular and radial processes of
Section \ref{sec:radial-angular} are $A^\star_t=(X^{\star\ang}_t,V^{\star\ang}_t)$ and
$R^\star_t=(X^{\star\rad}_t,V^{\star\rad}_t)$, with $X^{\star\ang}_t=\Pi X^\star_t$, $V^{\star\ang}_t=\Pi V^\star_t$,
$X^{\star\rad}_t=\Pi^\rad X^\star_t$ and $V^{\star\rad}_t=\Pi^\rad V^\star_t$. They take values in
$\mathrm{span}(u_\perp)$ and $\mathrm{span}(u)$, respectively, so each is effectively one-dimensional.

\begin{cor}\label{cor:ang radial processes}
The angular process satisfies the linear SDE
\begin{align*}
	\dd X^{\star\ang}_t &= V^{\star\ang}_t \dd t \\
	\dd V^{\star\ang}_t &= \dd W^{\star\ang}_t -\frac3{\tau^2} X^{\star\ang}_t\dd t -\frac3{\tau} V^{\star\ang}_t \dd t ,
\end{align*}
and the radial process satisfies the nonlinear SDE
\begin{align*}
	\dd X^{\star\rad}_t &= V^{\star\rad}_t \dd t \\
	\dd V^{\star\rad}_t &= \dd W^{\star\rad}_t + \tau F\big(\langle u, X^{\star\rad}_t + \tau V^{\star\rad}_t\rangle, \sigma(t)\big)\, u \dd t ,
\end{align*}
where $W^{\star\ang}=\Pi W^\star$ and $W^{\star\rad}=\Pi^\rad W^\star$ are independent.
\end{cor}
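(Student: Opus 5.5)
This is a direct specialisation of Theorem \ref{thm:closed-system} to the nearly constant velocity model. I will substitute the data of Example \ref{ex:Nearly-constant-velocity}, namely $p=k=2$, $\hat B=\Bm 0&1\\0&0\Em$, $\hat c(\tau)^\T=(1,\tau)$, $\hat c_k(\tau)=\tau$ and $\sigma(t)^2=\tau^3/3$, and then write out the two block SDEs.

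\emph{Angular part.} Theorem \ref{thm:closed-system} gives
\[ \dd A^\star_t=\left[BA^\star_t-\frac{\tau}{\sigma(t)^2}\,q\,\hat c(\tau)^\T A^\star_t\right]\dd t+q\,\dd W^{\star\ang}_t. \]
By the convention $a^\T z=\sum_j a_j z^{(j)}$, we have $\hat c(\tau)^\T A^\star_t=X^{\star\ang}_t+\tau V^{\star\ang}_t$. The factor in front is $\tau/\sigma(t)^2=3/\tau^2$, and $q=e_2\otimes I_2$ places this term in the velocity block only. The first block of $BA^\star_t$ is $V^{\star\ang}_t$ and the second is zero. The velocity equation therefore reads
\[ \dd V^{\star\ang}_t=\dd W^{\star\ang}_t-\frac{3}{\tau^2}\big(X^{\star\ang}_t+\tau V^{\star\ang}_t\big)\dd t, \]
which is the stated form with coefficients $3/\tau^2$ and $3/\tau$.

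\emph{Radial part.} With $p-1=1$, the drift correction in Theorem \ref{thm:closed-system} is $\tau F(\mu,\sigma)\,q\,u$, where $F=M_0/M_1$. It enters only the velocity block, as $\tau F(\mu,\sigma)u$. By Lemma \ref{lem:kron}(4), $\mu=\langle u,X^{\star\rad}_t\rangle+\tau\langle u,V^{\star\rad}_t\rangle=\langle u,X^{\star\rad}_t+\tau V^{\star\rad}_t\rangle$. The position block again comes from $BR^\star_t$, giving $\dd X^{\star\rad}_t=V^{\star\rad}_t\dd t$.

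\emph{Independence.} The independence of $W^{\star\ang}=\Pi W^\star$ and $W^{\star\rad}=\Pi^\rad W^\star$ is already asserted in Theorem \ref{thm:closed-system}. It can also be checked directly: the pair is jointly Gaussian, and its cross-covariance is $\Pi\Pi^\rad t=u_\perp u_\perp^\T uu^\T t=0$.

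The argument contains no real obstacle. The only points needing care are the bookkeeping of which block the term $q(\cdot)$ lands in and the identification of $\hat c(\tau)^\T A^\star_t$ as the blockwise combination $X^{\star\ang}_t+\tau V^{\star\ang}_t$.
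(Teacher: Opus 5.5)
Your proposal is correct and follows the paper's proof: both specialise Theorem~\ref{thm:closed-system} to $p=k=2$ using $\hat c_k(\tau)=\tau$, $\hat c(\tau)^\T A^\star_t=X^{\star\ang}_t+\tau V^{\star\ang}_t$ and $\tau/\sigma(t)^2=3/\tau^2$. Your explicit blockwise bookkeeping and the direct check of the independence are a faithful expansion of that one-line argument.
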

\begin{proof}
This is the special case $p=k=2$ of Theorem \ref{thm:closed-system}, with $\hat c_k(\tau)=\tau$,
$\hat c(\tau)^\T A^\star_t=X^{\star\ang}_t+\tau V^{\star\ang}_t$ and $\tau/\sigma(t)^2=3/\tau^2$.
\end{proof}

The following theorem describes the rate at which $X^{\star\ang}_t\to0$ as $t\uparrow T$.

\begin{thm}\label{thm:hits-ray}
For the nearly-constant-velocity model,
\[  \limsup_{t\uparrow T} \frac{\|X^{\star\perp}_t- \tfrac12\tau(t)(W^{\star\perp}_t+C_0)\|}{\tau(t)\sqrt{\tau(t) \log \log (1/\tau(t))}} = \frac{1}{\sqrt6}, \qquad \text{a.s.} \]
where $C_0 = V_0^{\star\perp}+3X_0^{\star\perp}/T\in\mathrm{span}(u_\perp)$.
In particular, the angular process converges to zero almost surely and thus $X_T^\star\in\mathrm{span}(u)$.
\end{thm}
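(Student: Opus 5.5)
The plan is to solve the angular SDE of Corollary \ref{cor:ang radial processes} explicitly and then apply a law of the iterated logarithm to the resulting stochastic integral. Since $X^{\star\ang}$ and $V^{\star\ang}$ live in $\mathrm{span}(u_\perp)$, write $X^{\star\ang}_t=x_tu_\perp$, $V^{\star\ang}_t=v_tu_\perp$ and $W^{\star\ang}_t=\beta_tu_\perp$, with $\beta$ a standard scalar Brownian motion. Eliminating $v$ gives the second-order equation
\[ \ddot x_t+\frac{3}{\tau}\dot x_t+\frac{3}{\tau^2}x_t=\dot\beta_t . \]

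\emph{Homogeneous equation.} In the variable $\tau=T-t$ this is an Euler equation. Its indicial polynomial is $r^2-4r+3=0$, so the fundamental solutions are $\tau$ and $\tau^3$. Their Wronskian, with respect to $t$, is $-2\tau^3$.

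\emph{Particular solution.} Variation of parameters gives the Green kernel $G(t,s)=\tfrac12\tau(t)-\tfrac12\tau(t)^3\tau(s)^{-2}$. Hence
\[ x_t=a\,\tau(t)+b\,\tau(t)^3+\tfrac12\tau(t)\beta_t-\tfrac12\tau(t)^3 N_t,\qquad N_t:=\int_0^t\tau(s)^{-2}\dd\beta_s . \]
Rigorously, I would verify this formula by It\^o's product rule rather than by the formal ODE argument. The constants $a,b$ are fixed by $x_0$ and $v_0$. I expect $a=\tfrac12\langle C_0,u_\perp\rangle$, which is exactly why $C_0=V_0^{\star\ang}+3X_0^{\star\ang}/T$ appears; checking this is a short linear solve. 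Consequently
\[ X^{\star\ang}_t-\tfrac12\tau(t)\big(W^{\star\ang}_t+C_0\big)=\Big(b\,\tau(t)^3-\tfrac12\tau(t)^3N_t\Big)u_\perp . \]

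\emph{Law of the iterated logarithm.} $N$ is a continuous martingale with $\langle N\rangle_t=\int_0^t\tau(s)^{-4}\dd s=\tfrac13\big(\tau(t)^{-3}-T^{-3}\big)\to\infty$ as $t\uparrow T$. By Dambis--Dubins--Schwarz, $N_t=\gamma_{\langle N\rangle_t}$ for a Brownian motion $\gamma$. The classical LIL at infinity then gives
\[ \limsup_{t\uparrow T}\frac{|N_t|}{\sqrt{2\langle N\rangle_t\log\log\langle N\rangle_t}}=1 \quad\text{a.s.} \]
Using $\langle N\rangle_t\sim\tfrac13\tau^{-3}$ and $\log\log(\tfrac13\tau^{-3})\sim\log\log(1/\tau)$, we get $|N_t|\approx\sqrt{\tfrac23\tau^{-3}\log\log(1/\tau)}$ along the limsup. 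Multiplying by $\tfrac12\tau^3$ yields $\tfrac{1}{\sqrt6}\,\tau\sqrt{\tau\log\log(1/\tau)}$. The term $b\tau^3$ is $o\big(\tau^{3/2}\sqrt{\log\log(1/\tau)}\big)$, so it does not affect the limit, and the claimed constant $1/\sqrt6$ follows.

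\emph{Conclusion and main obstacle.} Since $\tau(t)\to0$, $W^{\star\ang}$ is continuous on $[0,T]$, and the remainder is $O\big(\tau^{3/2}\sqrt{\log\log(1/\tau)}\big)\to0$, we obtain $X^{\star\ang}_t\to0$ a.s. Hence $X^\star_T\in\mathrm{span}(u)$. The main obstacle I expect is bookkeeping, not conceptual: correctly identifying $a$ in terms of $C_0$, and justifying the explicit solution rigorously via It\^o's formula. Replacing $\log\log\langle N\rangle_t$ by $\log\log(1/\tau)$ in the LIL is routine, since the ratio of the two tends to one.
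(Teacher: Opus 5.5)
Your proposal is correct and is in substance the paper's proof. Your explicit solution is exactly the paper's identity $x_t-\tfrac{\tau}{2}(w_t+c_0)=-\tfrac{\tau}{2}s_t=-\tfrac{\tau^3}{2}\big(s_0T^{-2}+\int_0^t\tau(s)^{-2}\dd w_s\big)$, with your $a=c_0/2$ and $b=-s_0/(2T^2)$, where $s_0=x_0/T+v_0$. You then apply the same Dambis--Dubins--Schwarz and law-of-the-iterated-logarithm argument to $\int_0^t\tau(s)^{-2}\dd w_s$.

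The only difference is how you reach this representation: you use variation of parameters for the Euler equation and verify the result with It\^o's formula. The paper instead uses the decoupling substitution $z_t=x_t/\tau$, $s_t=z_t+v_t$, $p_t=v_t+3z_t$. Under it $p_t=c_0+w_t$ is a Brownian motion and $s_t/\tau^2$ is a martingale, so $C_0$ appears directly rather than through a linear solve.
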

The proof is given in the appendix and depends on the law of the iterated logarithm for Brownian motion.

\subsection{Discretisation of the conditioned process}\label{sec:discr_ncv}
For simulation purposes, rather than discretising $(X_t^{\star\ang}, V_t^{\star\ang})$, we instead discretise $(X_t^{\star\ang}, M_t^{\star\ang})$, where \[M_t^{\star\ang} = \Pi\big(X_t^\star + \tau V_t^\star\big) = X_t^{\star\ang} + \tau V_t^{\star\ang} = \Pi m(t,Z_t^\star).\]
The next lemma shows that the derived system for $M_t^{\star\ang}$ eliminates the $\tau^{-2}$ term, making it more amenable for numerical discretisation. Note that the SDE for the radial part is more ``friendly'' to numerical discretisation.

\begin{lem}\label{lem:discretisation}
The process $(X_t^{\star\ang}, M_t^{\star\ang})$ satisfies the SDE
\begin{equation}\label{eq;ncv-sde}
\begin{split}
	\dd X_t^{\star\ang} & = \frac{M_t^{\star\ang} - X_t^{\star\ang}}{\tau} \dd t\\
	\dd M_t^{\star\ang} & = -3\frac{M_t^{\star\ang}}{\tau} \dd t + \tau \dd W^{\star\ang}_t
\end{split}	
\end{equation}
\end{lem}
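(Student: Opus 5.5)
The plan is a direct Itô computation, starting from the angular SDE in Corollary \ref{cor:ang radial processes}:
\[ \dd X^{\star\ang}_t = V^{\star\ang}_t\dd t,\qquad \dd V^{\star\ang}_t = \dd W^{\star\ang}_t - \frac{3}{\tau^2}X^{\star\ang}_t\dd t - \frac{3}{\tau}V^{\star\ang}_t\dd t, \]
with $\tau=\tau(t)=T-t$, so that $\dd\tau=-\dd t$. Both equations only need to be rewritten in the coordinates $(X^{\star\ang},M^{\star\ang})$, where $M^{\star\ang}_t=X^{\star\ang}_t+\tau V^{\star\ang}_t$.

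The first equation is a change of variables. For $t<T$ we can invert the definition of $M^{\star\ang}_t$ to get $V^{\star\ang}_t=(M^{\star\ang}_t-X^{\star\ang}_t)/\tau$. Substituting this into $\dd X^{\star\ang}_t=V^{\star\ang}_t\dd t$ gives the first line of \eqref{eq;ncv-sde}.

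For the second equation we apply the product rule to $\tau V^{\star\ang}_t$. Since $\tau$ is deterministic and of bounded variation, no quadratic covariation term appears:
\[ \dd M^{\star\ang}_t = \dd X^{\star\ang}_t + \tau\,\dd V^{\star\ang}_t - V^{\star\ang}_t\dd t = \tau\,\dd V^{\star\ang}_t , \]
because $\dd X^{\star\ang}_t = V^{\star\ang}_t\dd t$ cancels the last term. Inserting the SDE for $V^{\star\ang}$ yields
\[ \dd M^{\star\ang}_t = \tau\,\dd W^{\star\ang}_t - \frac{3}{\tau}\left(X^{\star\ang}_t+\tau V^{\star\ang}_t\right)\dd t = \tau\,\dd W^{\star\ang}_t - 3\frac{M^{\star\ang}_t}{\tau}\dd t, \]
which is the second line of \eqref{eq;ncv-sde}. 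This also shows why the $\tau^{-2}$ term disappears: the coefficients $3/\tau^2$ and $3/\tau$ combine into exactly $(3/\tau^2)\,M^{\star\ang}_t$.

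There is no real obstacle in this argument. The only point needing care is that everything is done on $[0,T)$, where $\tau>0$, so the division by $\tau$ is legitimate. As a consistency check, the identity $M^{\star\ang}_t=\Pi m(t,Z^\star_t)$ follows from $m(t,z)=x+\tau v$ together with the linearity of $\Pi$.
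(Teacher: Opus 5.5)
Your proof is correct and is essentially the paper's argument. The paper substitutes $V^{\star\ang}_t=(M^{\star\ang}_t-X^{\star\ang}_t)/\tau$ and equates two expressions for its differential, whereas you differentiate $M^{\star\ang}_t=X^{\star\ang}_t+\tau V^{\star\ang}_t$ directly to get $\dd M^{\star\ang}_t=\tau\,\dd V^{\star\ang}_t$; this is the same computation, and it is the one the paper itself uses for the guided process in the appendix.
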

\begin{proof}
We simply replace $V_t^{\star\ang}$ by $(M_t^{\star\ang}-X^{\star\ang}_t)/\tau$.
The equation for $X_t^{\star\ang}$ is straightforward. For the second equation, note that on the one hand
\[ \dd \left(\frac{M_t^{\star\ang}-X^{\star\ang}_t}{\tau} \right)= \frac1{\tau^2} (M_t^{\star\ang}-X^{\star\ang}_t) \dd t + \frac1{\tau} \dd (M_t^{\star\ang}-X^{\star\ang}_t)=\frac1{\tau}\dd M_t^{\star\ang}, \]
where the final equality follows upon substituting the SDE for $X_t^{\star\ang}$.
On the other hand, from the SDE for $V_t^{\star\ang}$ we also have
\[  \dd \left(\frac{M_t^{\star\ang}-X^{\star\ang}_t}{\tau}\right) =-\frac{3}{\tau^2} M_t^{\star\ang} \dd t + \dd W_t^{\star\ang}. \]
Combining the two equations gives the SDE for $M_t^{\star\ang}$.
\end{proof}
\begin{lem}\label{lem:impl_euler_ncv}
One implicit Euler discretisation step of \eqref{eq;ncv-sde}, from $t$ to $t+\delta$, is given by
\[
\begin{split}
 (\tau_\delta+3\delta)\,M_{t+\delta}^{\star\ang} &= \tau_\delta\big(M_t^{\star\ang} +\tau(t)(W^{\star\ang}_{t+\delta}-W^{\star\ang}_t)\big),\\
 (\tau_\delta+\delta)\,X_{t+\delta}^{\star\ang} &= \tau_\delta X_{t}^{\star\ang} + \delta\, M_{t+\delta}^{\star\ang},
\end{split}
\]
where $\tau_\delta:=\tau(t+\delta)$. In the last step, $t+\delta=T$, we have $\tau_\delta=0$, so that $M_T^{\star\ang}=0$ and $X_T^{\star\ang}=M_T^{\star\ang}=0$: the discretised process ends exactly on the ray, for every grid.
\end{lem}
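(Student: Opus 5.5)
The plan is to apply the implicit (backward) Euler rule to each equation of \eqref{eq;ncv-sde} in turn. The drift is evaluated at the new time $t+\delta$, and the diffusion coefficient and Brownian increment are taken at the left endpoint as usual. The system is triangular: the equation for $M^{\star\ang}$ does not involve $X^{\star\ang}$, while the equation for $X^{\star\ang}$ only involves $M^{\star\ang}$ linearly. So I would first update $M^{\star\ang}$ and then use the new value $M^{\star\ang}_{t+\delta}$ in the update of $X^{\star\ang}$. This ordering is what produces the two displayed lines.

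For the $M$-equation, the implicit step reads
\[ M^{\star\ang}_{t+\delta} = M^{\star\ang}_t - 3\delta\,\frac{M^{\star\ang}_{t+\delta}}{\tau_\delta} + \tau(t)\big(W^{\star\ang}_{t+\delta}-W^{\star\ang}_t\big). \]
Multiplying by $\tau_\delta$ and collecting the $M^{\star\ang}_{t+\delta}$ terms gives the first identity. For the $X$-equation, the implicit step is
\[ X^{\star\ang}_{t+\delta}=X^{\star\ang}_t+\delta\,\frac{M^{\star\ang}_{t+\delta}-X^{\star\ang}_{t+\delta}}{\tau_\delta}. \]
Multiplying by $\tau_\delta$ and rearranging gives the second identity. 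I would note explicitly that the multiplied-through forms are used as the definition of the scheme, because they remain meaningful when $\tau_\delta=0$, whereas the unmultiplied forms do not.

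For the final claim, set $t+\delta=T$, so $\tau_\delta=0$ and $\delta=\tau(t)>0$. The first identity becomes $3\delta M^{\star\ang}_T=0$, hence $M^{\star\ang}_T=0$. The second becomes $\delta X^{\star\ang}_T=\delta M^{\star\ang}_T$, hence $X^{\star\ang}_T=M^{\star\ang}_T=0$. This holds for any grid, since only $\delta>0$ is used. Moreover, $\tau_\delta+3\delta>0$ and $\tau_\delta+\delta>0$ at every step, so the scheme is always solvable. The step needing most care is justifying that taking the multiplied-through form is the right reading at the singular final step. That form is exactly the continuous extension of the implicit Euler update as $\tau_\delta\downarrow0$.
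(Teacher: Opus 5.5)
Your proposal is correct and follows the paper's argument: the paper likewise applies the implicit Euler step to \eqref{eq;ncv-sde} with the drift evaluated at $t+\delta$ and the noise coefficient $\tau(t)$ taken at the left endpoint, multiplies through by $\tau_\delta$, and reads off $M_T^{\star\ang}=X_T^{\star\ang}=0$ when $\tau_\delta=0$. Your remark that the multiplied-through form is what defines the scheme at the singular final step is a sensible clarification that the paper leaves implicit.
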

This follows directly from the implicit Euler step for \eqref{eq;ncv-sde}, with the drift evaluated at $t+\delta$. 

\section{Guided process for the nonlinear model}\label{sec:nonlinear}

Let $q$ have full column rank, with left inverse $q^+:=(q^\T q)^{-1}q^\T$. Consider the nonlinear model \eqref{eq:Z},
in which the nonlinearity $qf$ lies in the range of the diffusion coefficient. For $f\neq 0$ it is a hard problem to sample from  the process conditioned to hit the ray. 
For a direction $u\in S^{p-1}$, write $g_u$ for the function $g$ of Definition~\ref{def:g}, and define the {\it guided process} as the process $Z^\circ$  solving the SDE in \eqref{eq:guided-nonlinear}.
In this section we show that this process ends on the ray at time $T$. Moreover, we derive the Radon-Nikodym derivative of the law of the conditioned process with respect to the law of the guided process. In case $f\equiv 0$ the guiding function $g_u$ is exact, so the guided process coincides with the
conditioned process $Z^\star$. We will make the following assumption on $f$.  

\begin{ass}\label{ass:dh}
The function $f\colon[0,T]\times\RR^d\to\RR^m$ is measurable, bounded, and locally Lipschitz in $z$, uniformly in $t$.
\end{ass}

\subsection{The guided process ends at the ray}
Let $\Omega=C([0,T],\RR^d)$ with coordinate process $Z$ and natural filtration $(\scr F_t)$. On $\Omega$, denote by $\PP$ the law of the unconditioned process \eqref{eq:Z} and by 
$\mathbb{L}$ the law of the unconditioned process when $f\equiv 0$. For these laws, and for the laws
introduced below, we use the same symbol for the expectation, for instance $\mathbb{L}\xi=\int\xi\dd\mathbb{L}$. Set
$U=X_T/\|X_T\|$. 
Let $\mathbb{L}_u$ be a regular version of the conditional law of $Z$ under $\mathbb{L}$ given
$U=u$; it exists because $\Omega$ is Polish. With $\varsigma$ the surface measure on $S^{p-1}$ (Section~\ref{sec:general}), by definition,
\begin{equation}\label{eq:disintegration}
  \mathbb{L}(A)=\int_{S^{p-1}}\mathbb{L}_u(A)\,g_u(0,z_0)\,\varsigma(\dd u),\qquad A\subseteq\Omega\ \text{Borel},
\end{equation}
and $\mathbb{L}_u$ is concentrated on $\{U=u\}=\{X_T\in\{Ru\colon R>0\}\}$ for $\varsigma$-almost every $u$. The following result relates $\mathbb{L}_u$ to the SDE for $Z^\star$ in Equation \eqref{eq:Zcirc}. 
\begin{lem}\label{lem:conditional-law}
For $\varsigma$-almost every $u$, under $\mathbb{L}_u$ the process
\[
  W_t^\star=q^+\Big(Z_t-z_0-\int_0^t\big[BZ_s+Q\nabla_z\log g_u(s,Z_s)\big]\dd s\Big),\qquad t<T,
\]
is an $(\scr F_t)_{t<T}$-Brownian motion on $[0,T)$, and $Z$ solves \eqref{eq:Zcirc} with driving process $W^\star$.
\end{lem}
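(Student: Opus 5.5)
The natural route is Doob's $h$-transform on $[0,t]$ for each fixed $t<T$, followed by Girsanov's theorem. Fix $u$ and $t<T$.

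\textbf{Step 1: the martingale.} Under $\mathbb{L}$, $g_u(s,Z_s)$ is the conditional density of $U$ given $\scr F_s$, with respect to $\varsigma$. This uses the Markov property together with the fact that $U$ depends only on $X_T$, and $X_T\mid Z_s$ is Gaussian by Lemma~\ref{lem:condmeancov}. Hence, for $A\in\scr F_t$ and Borel $\Gamma\subseteq S^{p-1}$,
\[
\mathbb{L}(A\cap\{U\in\Gamma\})=\mathbb{L}\Big[\ind_A\int_\Gamma g_{u}(t,Z_t)\,\varsigma(\dd u)\Big].
\]
Comparing with \eqref{eq:disintegration} gives, for $\varsigma$-a.e.\ $u$,
\[
\left.\frac{\dd\mathbb{L}_u}{\dd\mathbb{L}}\right|_{\scr F_t}=\frac{g_u(t,Z_t)}{g_u(0,z_0)}.
\]
The null set a priori depends on $t$. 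To remove this, take a countable dense set of rational $t$ and use a countable generating family of $\scr F_t$; both sides are continuous in $t$ (the left side as a martingale on $[0,T)$, and $g_u$ is continuous for $t<T$). This yields a single $\varsigma$-null set outside of which the identity holds for all $t<T$.

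\textbf{Step 2: Girsanov.} The function $g_u$ is smooth and strictly positive on $[0,T)\times\RR^d$, since $\Sigma(t)$ is positive definite and $M_{p-1}>0$. Being an expectation under a diffusion, it solves the backward Kolmogorov equation $\partial_s g+\scr L g=0$, where $\scr L$ is the generator of the linear model. By It\^o's formula, $g_u(s,Z_s)/g_u(0,z_0)=\scr E\big(\int_0^{\cdot}\nabla_z\log g_u(r,Z_r)^\T q\,\dd W_r\big)_s$, where $W=q^+(Z-z_0-\int BZ\,\dd s)$ is the $\mathbb{L}$-Brownian motion. Here $q$ has full column rank, and $\dd Z-BZ\dd s$ lies in the range of $q$, so $q^+$ recovers $W$. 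Girsanov's theorem on $[0,t]$ then shows that
\[
W_s-\int_0^s q^\T\nabla_z\log g_u(r,Z_r)\,\dd r
\]
is an $\mathbb{L}_u$-Brownian motion on $[0,t]$. Applying $q^+$ and using $q^+Q=q^\T$, this is exactly $W^\star$ as defined in the statement. Since $t<T$ is arbitrary and the definitions are consistent, $W^\star$ is a Brownian motion on $[0,T)$.

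\textbf{Step 3: the SDE.} It remains to check that $Z$ solves \eqref{eq:Zcirc}, i.e.\ $Z_t-z_0-\int_0^t(BZ_s+Qr)\dd s=qW_t^\star$. Under $\mathbb{L}$, hence also under $\mathbb{L}_u\ll\mathbb{L}$ on $\scr F_t$, the process $Z-z_0-\int BZ$ takes values in $\operatorname{ran}q$, and $Qr=q(q^\T r)$ also lies in $\operatorname{ran} q$. Since $qq^+$ is the identity on $\operatorname{ran} q$, the left side equals $qW^\star_t$.

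\textbf{Main obstacle.} I expect the delicate point to be Step 1: identifying the regular conditional law with the $h$-transform simultaneously for a.e.\ $u$ and all $t$. This needs the joint measurability of $(u,z)\mapsto g_u(t,z)$ and the countability argument sketched above. Girsanov itself is unproblematic, because we use the known density directly rather than needing Novikov's condition.
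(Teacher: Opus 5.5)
Your proposal is correct and follows the paper's proof essentially step for step. The Markov property plus the disintegration identify $\dd\mathbb{L}_u/\dd\mathbb{L}|_{\scr F_t}=g_u(t,Z_t)/g_u(0,z_0)$ off a single $\varsigma$-null set via countable generation and rational $t$, and the stochastic-exponential representation with Girsanov on each $[0,t]$ then yields $W^\star$. The only differences are minor: the paper shows directly, by Tonelli over $R$, that $g_u(t,Z_t)$ is an $\mathbb{L}$-martingale for every $u$, and uses this both to pass to irrational $t$ and to kill the finite-variation part in It\^o's formula, whereas you use the backward Kolmogorov equation and a continuity argument; this is fine once ``continuous in $t$'' is made precise as martingale convergence of the density process along rationals $s_n\uparrow t$ together with $\scr F_{t-}=\scr F_t$.
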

The proof is given in the appendix.

\begin{thm}\label{lem:strong-nonlinear}
Under Assumption~\ref{ass:dh}:
\begin{enumerate}
\item Equation \eqref{eq:Z} has a unique strong solution on $[0,T]$.
\item For $\varsigma$-almost every $u$, equation \eqref{eq:guided-nonlinear} has a unique strong solution on $[0,T)$. The limit
$Z^\circ_T:=\lim_{t\uparrow T}Z^\circ_t$ exists almost surely, and the law $\PP^\circ_u$ of $Z^\circ$ on $\Omega$
satisfies $\dd\PP^\circ_u/\dd\mathbb{L}_u=D_T$, where
\[
  D_t=\exp\Big(\int_0^tf(s,Z_s)^\T\dd W_s^\star-\frac12\int_0^t\|f(s,Z_s)\|^2\dd s\Big),
  \qquad D_T:=\lim_{t\uparrow T}D_t>0 .
\]
In particular, $\PP^\circ_u$ and $\mathbb{L}_u$ are equivalent, and the guided proposal ends on the ray:
$CZ^\circ_T\in\{Ru\colon R>0\}$ almost surely.
\end{enumerate}
\end{thm}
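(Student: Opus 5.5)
For part 1, I would use the standard result for SDEs with locally Lipschitz coefficients. The drift $z\mapsto Bz+qf(t,z)$ is locally Lipschitz, uniformly in $t$, and the diffusion coefficient $q$ is constant. So pathwise uniqueness and a local strong solution up to explosion follow from the usual localisation argument. Non-explosion follows because $f$ is bounded, so the drift grows at most linearly: Gronwall applied to $\|Z_t\|\le \|z_0\|+\int_0^t(\|B\|\|Z_s\|+\|q\|\|f\|_\infty)\dd s+\|q\|\sup_{s\le t}\|W_s\|$ gives a finite bound on $[0,T]$. Alternatively, Girsanov with the bounded integrand $f$ gives a weak solution, and Yamada--Watanabe then yields a strong one.

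For part 2, I would argue by Girsanov under $\mathbb{L}_u$, working on $[0,t]$ for fixed $t<T$. By Lemma~\ref{lem:conditional-law}, for $\varsigma$-a.e.\ $u$, under $\mathbb{L}_u$ the coordinate process solves \eqref{eq:Zcirc} with Brownian motion $W^\star$ on $[0,T)$. Since $f$ is bounded, Novikov holds on $[0,t]$, so $D$ restricted to $[0,t]$ is a true $\mathbb{L}_u$-martingale. Under $D_t\,\dd\mathbb{L}_u$, the process $W_s^\star-\int_0^s f(r,Z_r)\dd r$ is a Brownian motion on $[0,t]$. Hence $Z$ solves \eqref{eq:guided-nonlinear} weakly on $[0,t]$.

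Strong existence and uniqueness of \eqref{eq:guided-nonlinear} on $[0,t]$ would follow as in part 1. The added drift $Qr(s,z)$ is smooth in $z$ for $s<T$, because $\log g$ is smooth once $\Sigma(s)$ is positive definite and $M_{p-1}>0$. To exclude explosion before $t$, I would transfer non-explosion from $\mathbb{L}_u$ through the equivalence $D_t\,\dd\mathbb{L}_u\sim\mathbb{L}_u$ on $\mathscr F_t$. Combining weak existence with pathwise uniqueness via Yamada--Watanabe identifies $\PP^\circ_u|_{\mathscr F_t}=D_t\,\mathbb{L}_u|_{\mathscr F_t}$ for every $t<T$.

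\textbf{Main step: letting $t\uparrow T$.} I need $D_T=\lim_{t\uparrow T}D_t$ to exist, be positive, and satisfy $\mathbb{L}_u D_T=1$, since then $D$ is uniformly integrable. Bounded quadratic variation does this: $\langle \log D\rangle_t=\int_0^t\|f\|^2\dd s\le T\|f\|_\infty^2$. Hence the martingale part $\int_0^t f^\T\dd W^\star$ converges a.s.\ and in $L^2$ as $t\uparrow T$, and the compensator converges too. So $D_T$ exists and is strictly positive. Moreover, $\mathbb{L}_u D_t^2\le \exp(T\|f\|_\infty^2)$, by writing $D_t^2$ as an exponential martingale times a bounded factor. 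This gives $L^2$-boundedness, hence uniform integrability, so $D_t\to D_T$ in $L^1$ and $\mathbb{L}_uD_T=1$.

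Next I would define the measure $D_T\,\dd\mathbb{L}_u$ on $\Omega$. Its restriction to each $\mathscr F_t$, $t<T$, agrees with the law of $Z^\circ$ on $[0,t]$. Under $\mathbb{L}_u$ paths are continuous on $[0,T]$, so the limit $Z_T=\lim_{t\uparrow T}Z_t$ exists $\mathbb{L}_u$-a.s. By absolute continuity it exists $D_T\mathbb{L}_u$-a.s., giving existence of $Z^\circ_T$ and $\dd\PP^\circ_u/\dd\mathbb{L}_u=D_T$. Note that $\mathscr F_T=\sigma(\mathscr F_t,t<T)$ by continuity, so consistency on all $\mathscr F_t$ determines the measure. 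Equivalence follows from $D_T>0$. Finally, $\mathbb{L}_u$ is concentrated on $\{CZ_T\in\{Ru\colon R>0\}\}$ for a.e.\ $u$, so the same event has $\PP^\circ_u$-probability one.

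The delicate points I expect are the measurability and null-set bookkeeping in $u$, and showing that the uniform-integrability argument holds under $\mathbb{L}_u$, whose drift term $r$ is singular at $T$. The bound on $\mathbb{L}_uD_t^2$ uses only that $W^\star$ is an $\mathbb{L}_u$-Brownian motion on $[0,T)$ and $f$ is bounded, so the singularity of $r$ should not interfere.
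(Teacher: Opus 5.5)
Your proposal is correct and follows essentially the same route as the paper: Girsanov under $\mathbb{L}_u$, with $D_t^2$ written as a stochastic exponential times a factor bounded by $\exp(T\sup\|f\|^2)$ to get uniform integrability and $D_T>0$; pathwise uniqueness from the local Lipschitz property of the guiding drift plus Yamada--Watanabe for strong existence; and identification of $\PP^\circ_u$ with $D_T\,\dd\mathbb{L}_u$ via $\mathcal F_T=\sigma(\bigcup_{t<T}\mathcal F_t)$. The one point to tighten is that the Lipschitz constant must be uniform in $s\in[0,t]$, which needs joint smoothness of $\log g_u$ in $(s,z)$ on $[0,t]\times\RR^d$ rather than smoothness in $z$ alone; the paper isolates exactly this as Lemma~\ref{lem:g-lipschitz}.
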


\begin{proof}
(i) The drift $Bz+qf(t,z)$ is locally Lipschitz in $z$, uniformly in $t$. By boundedness of $f$,  there exists a $K>0$ such that 
\begin{equation}\label{eq:for strong solution}
	\langle x, B x+ q f(x)\rangle +\mbox{tr}(Q) \le K \left(1+\|x\|^2\right). 
\end{equation} 
Existence of a unique strong solution follows from Theorem 2.3.3 in \cite{reiss_sde_lecture}.

(ii) \emph{Existence of weak solution to \eqref{eq:guided-nonlinear}.} Work under $\mathbb{L}_u$. Since $f$ is bounded, $(D_t)_{t<T}$ is a martingale by
Novikov's condition. Moreover,
\[
  D_t^2=\exp\Big(\int_0^t(2f)^\T\dd W^\star_s-\frac12\int_0^t\|2f\|^2\dd s\Big)\,
        \exp\Big(\int_0^t\|f(s,Z_s)\|^2\dd s\Big),
\]
where we wrote $f$ for $f(s,Z_s)$ in the first factor. The first factor is the stochastic exponential of the bounded
process $2f$, hence a martingale with expectation one by Novikov's condition, and the second factor is at most
$\exp(T\sup\|f\|^2)$. Therefore $\mathbb{L}_u D_t^2\le\exp(T\sup\|f\|^2)$ for all $t<T$, and $(D_t)$ is uniformly integrable. The limit $D_T$
exists almost surely and in $L^1$, and $\mathbb{L}_u D_T=1$. Also $D_T>0$, because the $L^2$-bounded martingale
$\int_0^tf^\T\dd W^\star$ converges almost surely. Let $\QQ$ be the probability measure on $\Omega$ with $\dd\QQ/\dd\mathbb{L}_u=D_T$. By Girsanov's theorem, on
every $[0,t]$ with $t<T$, $\widetilde W=W^\star-\int_0^\cdot f(s,Z_s)\,\dd s$ is a $\QQ$-Brownian motion, and
\[
  \dd Z_s=\big[BZ_s+qf(s,Z_s)+Q\nabla_z\log g_u(s,Z_s)\big]\dd s+q\,\dd\widetilde W_s .
\]
So $(Z,\widetilde W)$ under $\QQ$ is a weak solution of \eqref{eq:guided-nonlinear}, with paths that are continuous
on $[0,T]$.

\emph{Pathwise uniqueness.} Fix $t<T$ and write
$b(s,z)=Bz+qf(s,z)+Q\nabla_z\log g_u(s,z)$ for the drift of \eqref{eq:guided-nonlinear}. By
Assumption~\ref{ass:dh} and Lemma~\ref{lem:g-lipschitz} in the appendix, for every $n$ there is a constant $K_n$ such that
$\|b(s,z)-b(s,z')\|\le K_n\|z-z'\|$ for all $s\in[0,t]$ and $\|z\|,\|z'\|\le n$. Let $Z$ and $Z'$ be two solutions
of \eqref{eq:guided-nonlinear} on $[0,t]$, driven by the same Brownian motion and with $Z_0=Z'_0=z_0$. Since the
diffusion coefficient $q$ is constant, the noise cancels in their difference:
\[
  Z_s-Z'_s=\int_0^s\big(b(r,Z_r)-b(r,Z'_r)\big)\dd r,\qquad s\le t .
\]
Let $\tau_n$ be the first time at which $Z$ or $Z'$ leaves the ball of radius $n$. Then
$\|Z_s-Z'_s\|\le K_n\int_0^s\|Z_r-Z'_r\|\dd r$ for $s\le t\wedge\tau_n$, so $Z=Z'$ on $[0,t\wedge\tau_n]$ by
Gronwall's inequality, path by path. Since $Z$ and $Z'$ have continuous paths, $\tau_n>t$ for all large $n$, and
hence $Z=Z'$ on $[0,t]$ almost surely. This is pathwise uniqueness for \eqref{eq:guided-nonlinear} on $[0,t]$.

\emph{Existence of a strong solution.} By the Yamada--Watanabe theorem \cite[Chapter 5, Corollary 3.23]{KaratzasShreve1991}, weak existence and
pathwise uniqueness give a unique strong solution on $[0,t]$ for every $t<T$. By uniqueness, these solutions are
consistent, so they define a unique strong solution $Z^\circ$ on $[0,T)$. By uniqueness in law, the law of
$(Z^\circ_s)_{s\le t}$ equals $\QQ$ on $\scr F_t$ for every $t<T$. Since $\QQ$-almost every path is continuous at $T$,
the limit $Z^\circ_T$ exists almost surely. Since $\scr F_T=\sigma\big(\bigcup_{t<T}\scr F_t\big)$, it follows that
$\PP^\circ_u=\QQ$. Since $D_T>0$, $\PP^\circ_u$ and $\mathbb{L}_u$ are equivalent. As
$\mathbb{L}_u$ is concentrated on $\{U=u\}=\{CZ_T\in\{Ru\colon R>0\}\}$, so is $\PP^\circ_u$.
\end{proof}

\subsection{Absolute continuity}
For $t<T$, $z\in\RR^d$ and Borel sets $B\subseteq S^{p-1}$, let the function $u\mapsto h_u(t,z)$ be
defined by
\[
  \int_B h_u(t,z)\,\varsigma(\dd u)=P\big(t,z;T,\{z'\colon Cz'/\|Cz'\|\in B\}\big),
\]
where $P$ is the transition kernel of \eqref{eq:Z}. That is, $h_u(t,z)$ is the density, with respect to
the surface measure $\varsigma$ on $S^{p-1}$, of the direction of $X_T$ given $Z_t=z$. It exists by the Radon--Nikodym theorem. Indeed, since $f$ is bounded, Girsanov's theorem shows that the law of
$(Z_s)_{s\in[t,T]}$ given $Z_t=z$ under \eqref{eq:Z} is equivalent to the corresponding law under the linear
model ($f\equiv0$). Hence the law of the direction of $X_T$ given $Z_t=z$ is absolutely continuous with respect to the
corresponding law under the linear model, which has density $g_u(t,z)$ with respect to $\varsigma$ by
Definition~\ref{def:g}.

Similar to our construction of $\mathbb{L}_u$, let $\PP_u$ be a regular version of the conditional law of $Z$ under $\PP$ given $U=u$. By definition,
\begin{equation}\label{eq:disintegration-nonlinear}
  \PP(A)=\int_{S^{p-1}}\PP_u(A)\,h_u(0,z_0)\,\varsigma(\dd u),\qquad A\subseteq\Omega\ \text{Borel}.
\end{equation}
We do not describe $\PP_u$ as the law of the solution of an SDE: its drift would involve $Q\nabla_z\log h_u$, and
nothing is known about the regularity of $h_u$. Instead, Theorem~\ref{prop:ac-nonlinear} characterises $\PP_u$
through its density with respect to the law of the guided process, which involves only the explicit guiding term
$Q\nabla_z\log g_u$ and the number $h_u(0,z_0)$.

\begin{thm}\label{prop:ac-nonlinear}
Under Assumption~\ref{ass:dh}, for $\varsigma$-almost every $u$, $\PP_u$ and $\PP^\circ_u$ are equivalent, and
\begin{equation}\label{eq:weight-nonlinear}
  \frac{\dd\PP_u}{\dd\PP^\circ_u}(Z)=\frac{g_u(0,z_0)}{h_u(0,z_0)}\,
  \exp\Big(\int_0^T f(s,Z_s)^\T q^\T\nabla_z\log g_u(s,Z_s)\,\dd s\Big),
\end{equation}
where $\int_0^T=\lim_{t\uparrow T}\int_0^t$, and the limit exists $\PP^\circ_u$-almost surely.
\end{thm}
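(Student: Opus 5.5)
We will obtain the density by composing three Radon--Nikodym derivatives: $\PP_u\to\PP$, then $\PP\to\mathbb{L}$, then $\mathbb{L}\to\mathbb{L}_u$, and finally using Theorem~\ref{lem:strong-nonlinear} for $\mathbb{L}_u\to\PP^\circ_u$.

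\emph{Step 1: unconditioned Girsanov.} Since $f$ is bounded, Novikov's condition shows that $\PP\ll\mathbb{L}$ on $\scr F_T$, with density
\[
\frac{\dd\PP}{\dd\mathbb{L}}=\Lambda_T:=\exp\Big(\int_0^Tf(s,Z_s)^\T\dd W_s-\tfrac12\int_0^T\|f(s,Z_s)\|^2\dd s\Big),
\]
where $W=q^+(Z-z_0-\int BZ\,\dd s)$ is the $\mathbb{L}$-Brownian motion. Disintegrating both sides along $U$ via \eqref{eq:disintegration} and \eqref{eq:disintegration-nonlinear} gives $\PP_u(A)h_u(0,z_0)=\mathbb{L}_u(\ind_A\Lambda_T)\,g_u(0,z_0)$ for $\varsigma$-a.e.\ $u$. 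To justify this, test against sets $A\cap\{U\in B'\}$ and use uniqueness of disintegrations, so that a countable generating class suffices. Hence
\[
\frac{\dd\PP_u}{\dd\mathbb{L}_u}=\frac{g_u(0,z_0)}{h_u(0,z_0)}\Lambda_T .
\]

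\emph{Step 2: rewrite $\Lambda_T$ under $\mathbb{L}_u$.} By Lemma~\ref{lem:conditional-law}, for $t<T$ we have $W_t=W^\star_t+\int_0^t q^\T\nabla_z\log g_u(s,Z_s)\dd s$, because $q^+Q=q^\T$. Substituting this identity gives, for $t<T$,
\[
\Lambda_t=D_t\exp\Big(\int_0^t f^\T q^\T\nabla_z\log g_u\,\dd s\Big).
\]
By Theorem~\ref{lem:strong-nonlinear}, $\dd\PP^\circ_u/\dd\mathbb{L}_u=D_T$ with $D_T>0$, and $D_t\to D_T$ almost surely. Also $\Lambda_t\to\Lambda_T$ $\mathbb{L}$-a.s. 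Since $\mathbb{L}=\int\mathbb{L}_u g_u\,\dd\varsigma$, this convergence holds $\mathbb{L}_u$-a.s.\ for a.e.\ $u$. Consequently $\int_0^t f^\T q^\T\nabla\log g_u\,\dd s$ converges $\mathbb{L}_u$-a.s., and by equivalence also $\PP^\circ_u$-a.s. Its limit equals $\log\Lambda_T-\log D_T$, and combining with Step 1 yields \eqref{eq:weight-nonlinear}.

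\emph{Step 3: equivalence.} The density \eqref{eq:weight-nonlinear} is finite and strictly positive almost surely. This requires $h_u(0,z_0)\in(0,\infty)$ for a.e.\ $u$ under consideration. The bound $h_u<\infty$ a.e.\ is automatic. Positivity follows because $h_u$ is the density of a law equivalent to the one with density $g_u>0$. Therefore $\PP_u\sim\PP^\circ_u$.

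The main obstacle is the interchange between the a.s.\ statements and the stochastic integrals. The integral $\int_0^T f^\T\dd W$ is defined under $\mathbb{L}$ on all of $[0,T]$, whereas the decomposition of $W$ under $\mathbb{L}_u$ only holds on $[0,T)$, where $\nabla\log g_u$ blows up as $t\uparrow T$. One also needs $\mathbb{L}_u$-versions of the $\mathbb{L}$-stochastic integral to agree. I would handle this by noting that stochastic integrals are limits in probability of Riemann sums along a fixed sequence of partitions. Passing to an a.s.\ convergent subsequence under $\mathbb{L}$ transfers, by the disintegration, to $\mathbb{L}_u$-a.s.\ convergence for a.e.\ $u$, which identifies the two integrals on each $[0,t]$ with $t<T$.
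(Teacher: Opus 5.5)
Your proposal is correct and follows essentially the same route as the paper. Both arguments use Girsanov for $\PP$ versus $\mathbb{L}$, disintegrate along $U$ to obtain $\dd\PP_u/\dd\mathbb{L}_u=(g_u(0,z_0)/h_u(0,z_0))\Lambda_T$, rewrite $\Lambda_t=D_t\exp\big(\int_0^t f^\T q^\T\nabla_z\log g_u\,\dd s\big)$ under $\mathbb{L}_u$, and divide by $\dd\PP^\circ_u/\dd\mathbb{L}_u=D_T$. There are two minor differences. First, the paper identifies the $\mathbb{L}$-stochastic integral with the $\mathbb{L}_u$-stochastic integral directly from the equivalence of $\mathbb{L}_u$ and $\mathbb{L}$ on each $\scr F_t$, $t<T$; this avoids your Riemann-sum argument, which is delicate because $f$ is only measurable in $t$. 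Second, the paper obtains $h_u(0,z_0)=g_u(0,z_0)\,\mathbb{L}_u[\Lambda_T]>0$ at once by taking $A=\Omega$ in the disintegration identity.
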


\begin{proof}
\emph{Step 1: Likelihood ratio between $\PP$ and $\mathbb{L}$.} By Theorem~1 of \cite{DelyonHu2006} with $b(t,z)=Bz$,
$\sigma=q$ and $h=f$ we obtain
 $\dd \PP = \Psi_T \dd \mathbb{L}$ on $\Omega$, where
\[
\Psi_t= \exp\Big(\int_0^t f(s,Z_s)^\T q^+\big(\dd Z_s-BZ_s\,\dd s\big)-\frac12\int_0^t\|f(s,Z_s)\|^2\dd s\Big)
\]
Here we used that $W=q^+(Z-z_0-\int_0^\cdot BZ_s\,\dd s)$ under $\mathbb{L}$. The stochastic integral is a
functional of the path, defined $\mathbb{L}$-almost surely as the limit in probability of Riemann sums, as in
the proof of Theorem~2 of \cite{DelyonHu2006}. We fix a version that is continuous in the upper limit of integration.

\emph{Step 2: Likelihood ratio between $\PP_u$ and $\mathbb{L}_u$.} Let $A\subseteq\Omega$ and $B\subseteq S^{p-1}$ be Borel sets. By \eqref{eq:disintegration-nonlinear}, Step~1 and \eqref{eq:disintegration},
\begin{align*}
	  \int_B\PP_u(A)\,h_u(0,z_0)\,\varsigma(\dd u)&=\PP(A\cap\{U\in B\})
  =\mathbb{L}\big[\Psi_T\ind_A\ind_B(U)\big]
 \\& =\int_B\mathbb{L}_u\big[\Psi_T\ind_A\big]\,g_u(0,z_0)\,\varsigma(\dd u).
\end{align*}
Taking $A=\Omega$ shows that $h_u(0,z_0)=g_u(0,z_0)\,\mathbb{L}_u[\Psi_T]$ for $\varsigma$-almost every $u$.
This is positive, because $g_u(0,z_0)>0$ by Theorem~\ref{thm:general case g and nabla log g} and $\Psi_T>0$. 
Since the Borel $\sigma$-field of $\Omega$ is countably generated, it follows that for $\varsigma$-almost every $u$
and all Borel sets $A\subseteq\Omega$
\[ \PP_u(A) = \frac{g_u(0,z_0)}{h_u(0,z_0)}\, \mathbb{L}_u\big[\Psi_T\ind_A\big] . \]
Therefore, 
\begin{equation}\label{eq:bayes-nonlinear}
  \frac{\dd\PP_u}{\dd\mathbb{L}_u}=\frac{g_u(0,z_0)}{h_u(0,z_0)}\,\Psi_T .
\end{equation}

\emph{Step 3: $\Psi_T$ under $\mathbb{L}_u$.} Fix $t<T$. On $\scr F_t$, $\mathbb{L}_u$ and
$\mathbb{L}$ are equivalent, so the stochastic integral in $\Psi_t$ is also the stochastic integral under
$\mathbb{L}_u$. Under $\mathbb{L}_u$,
$q^+(\dd Z_s-BZ_s\,\dd s)=\dd W_s^\star+q^\T\nabla_z\log g_u(s,Z_s)\,\dd s$, since $q^+Q=q^\T$. Hence
\[
  \Psi_t=\exp\Big(\int_0^tf^\T q^\T\nabla_z\log g_u\,\dd s\Big)\,D_t,\qquad
  \mathbb{L}_u\text{-almost surely}.
\]
Moreover, $\Psi_t\to\Psi_T$ outside a $\mathbb{L}$-null set. By \eqref{eq:disintegration}, this set is
$\mathbb{L}_u$-null for $\varsigma$-almost every $u$. Since $D_t\to D_T\in(0,\infty)$ by
Theorem~\ref{lem:strong-nonlinear}(2), and $\Psi_T\in(0,\infty)$, the limit
$E_T:=\lim_{t\uparrow T}\exp\big(\int_0^tf^\T q^\T\nabla_z\log g_u\,\dd s\big)$ exists in $(0,\infty)$, and
$\Psi_T=E_T\,D_T$, $\mathbb{L}_u$-almost surely.

\emph{Step 4: conclusion.} By \eqref{eq:bayes-nonlinear}, Step~3 and Theorem~\ref{lem:strong-nonlinear}(2),
\[
  \frac{\dd\PP_u}{\dd\mathbb{L}_u}=\frac{g_u(0,z_0)}{h_u(0,z_0)}\,E_T\,D_T
  \qquad\text{and}\qquad
  \frac{\dd\PP^\circ_u}{\dd\mathbb{L}_u}=D_T>0 .
\]
Hence $\PP_u$ and $\PP^\circ_u$ are equivalent, with density $g_u(0,z_0)E_T/h_u(0,z_0)$, which is
\eqref{eq:weight-nonlinear}.
\end{proof}

Since $f^\T q^\T\nabla_z\log g_u=(b-\tilde b)^\T\nabla_z\log g_u$, where $b$ is the drift of
\eqref{eq:Z} and $\tilde b$ its drift with $f\equiv0$, \eqref{eq:weight-nonlinear} is the familiar weight of a guided proposal
\cite{schauer2017guided}. Compared with the weight of \cite{DelyonHu2006}, who use the Gaussian bridge itself as the
proposal, adding $qf$ to the proposal replaces the stochastic integral by an ordinary integral. The estimator
$g_u(0,z_0)\exp\big(\int_0^T(b-\tilde b)^\T\nabla_z\log g_u\,\dd s\big)$ of the likelihood $h_u(0,z_0)$ is unbiased.

\begin{cor}\label{cor:linear-growth}
Theorem~\ref{lem:strong-nonlinear} and Theorem~\ref{prop:ac-nonlinear} remain valid if, in Assumption~\ref{ass:dh}, boundedness of $f$ is replaced by linear growth: there is a constant $K$ such that $\|f(t,z)\|\le K(1+\|z\|)$ for all $t\in[0,T]$ and $z\in\RR^d$.
\end{cor}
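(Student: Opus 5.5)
We go through the proofs of Theorem~\ref{lem:strong-nonlinear} and Theorem~\ref{prop:ac-nonlinear} and find each place where boundedness of $f$ is used. We then replace it by a localisation argument that only needs linear growth. Boundedness enters in four places:
\begin{enumerate}
\item the growth condition \eqref{eq:for strong solution} for strong existence of \eqref{eq:Z};
\item Novikov's condition, giving that $(D_t)_{t<T}$ is a martingale under $\mathbb{L}_u$;
\item the $L^2$ bound showing that $(D_t)$ is uniformly integrable and that $D_T>0$;
\item the application of Theorem~1 of \cite{DelyonHu2006}, where $\Psi_T$ must be a genuine density, and the Girsanov argument behind the existence of $h_u$.
\end{enumerate}

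Item (1) is immediate. Linear growth of $f$ gives $\langle x,Bx+qf(x)\rangle+\mathrm{tr}(Q)\le K(1+\|x\|^2)$, so Theorem~2.3.3 of \cite{reiss_sde_lecture} still applies. For items (2) and (4) on the unconditioned laws, we use the standard Bene\v{s}-type result for linear-growth drifts. Let $Y$ be a Gaussian process whose drift is linear, here $\mathbb{L}$. Then for a drift perturbation $f$ of linear growth, the exponential $\exp(\int_0^t f^\T\dd W-\frac12\int_0^t\|f\|^2\dd s)$ is a true martingale on $[0,T]$. The proof applies Novikov on short subintervals $[t_i,t_{i+1}]$, where $\mathbb{L}\exp(c\,\delta\sup_s\|Z_s\|^2)<\infty$ for $\delta$ small by Fernique, and then chains with the Markov property. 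This yields $\dd\PP=\Psi_T\,\dd\mathbb{L}$ with $\mathbb{L}\Psi_T=1$, which is all Step~1 and the definition of $h_u$ require. Steps~2--4 of Theorem~\ref{prop:ac-nonlinear} are then purely algebraic given these facts.

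The main obstacle is item (3), the claim that $\mathbb{L}_uD_T=1$ under the conditioned law. Here $Z$ has the singular guiding drift near $T$, so we cannot apply Fernique to the Gaussian process directly. We would first establish Gaussian-type tails for $\sup_{s\le T}\|Z_s\|$ under $\mathbb{L}_u$, using the disintegration \eqref{eq:disintegration}. Explicitly, $\mathbb{L}[\exp(c\sup\|Z\|^2)\mid U]$ is finite for $\varsigma$-a.e.\ $u$ because its $g_u(0,z_0)\varsigma$-average is finite. With that bound, Novikov on short subintervals shows that $D$ is a martingale on each $[0,t]$, $t<T$. The weak solution $\QQ$ is then defined consistently on $\bigcup_{t<T}\scr F_t$.

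To pass to $t=T$ without the $L^2$ bound, we avoid uniform integrability altogether and localise. Let $\sigma_n=\inf\{s:\|Z_s\|\ge n\}\wedge T$. The stopped process $D_{\cdot\wedge\sigma_n}$ has a bounded integrand, so the original argument applies verbatim and gives measures $\QQ_n$. These measures are consistent and coincide with the law of the guided process stopped at $\sigma_n$. Pathwise uniqueness is unchanged, since Lemma~\ref{lem:g-lipschitz} and local Lipschitz continuity still hold. It follows that $\PP^\circ_u(\sigma_n<T)=\QQ_n(\sigma_n<T)$.

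Non-explosion of the guided process then gives $\QQ_n(\sigma_n<T)\to0$, and the equality above is equivalent to $\mathbb{L}_uD_T=1$. Failing a direct non-explosion argument, we would instead deduce non-explosion from Step~2. That step gives $\PP_u\ll\mathbb{L}_u$ with density $\propto\Psi_T=E_TD_T$, and $\PP_u$ is a probability measure on continuous paths. Running Steps~2--4 on $\{\sigma_n=T\}$ and letting $n\to\infty$ then identifies the mass of $\PP^\circ_u$ on continuous paths as $1$. Positivity $D_T>0$ follows because $\int_0^t f^\T\dd W^\star$ has a.s.\ finite quadratic variation $\int_0^T\|f(s,Z_s)\|^2\dd s\le K^2T(1+\sup\|Z\|)^2<\infty$ and therefore converges.
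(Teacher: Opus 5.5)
\textbf{The gap is the passage to $t=T$ in item (3).}

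Several parts of your proposal are fine. Items (1) and (4) go through. So does your disintegration bound: applying \eqref{eq:disintegration} to $F=\exp(c\sup_{s\le T}\|Z_s\|^2)$, with $\mathbb{L}F<\infty$ by Fernique, gives $\mathbb{L}_uF<\infty$ for $\varsigma$-a.e.\ $u$. The Bene\v{s}/Fernique chaining is also a legitimate substitute for the paper's appeal to \cite{DelyonHu2006} or \cite{LiptserShiryaev2001} in Step~1.

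Your localisation is correct, but it only reformulates the claim. By monotone convergence,
$\mathbb{L}_uD_T=\lim_n\mathbb{L}_u[D_T\ind\{\sigma_n=T\}]=\lim_n\QQ_n(\sigma_n=T)=1-\lim_n\PP^\circ_u(\sigma_n<T)$.
So $\mathbb{L}_uD_T=1$ is \emph{equivalent} to $\sup_{s<T}\|Z^\circ_s\|<\infty$ almost surely. You never prove this non-explosion. It is not a routine consequence of linear growth, because the coefficients of the guiding term blow up as $t\uparrow T$.

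Your fallback is circular. On $\{\sigma_n=T\}$, Steps~2--3 give $\dd\PP_u=c\,E_T\,\dd\PP^\circ_u$ with $c=g_u(0,z_0)/h_u(0,z_0)$. Letting $n\to\infty$ then yields only
$\PP^\circ_u\big(\sup_{s<T}\|Z_s\|<\infty\big)=\PP_u\big[(cE_T)^{-1}\big]$.
That $\PP_u$ is a probability measure on continuous paths does not force this number to be $1$. Step~4 takes $\dd\PP^\circ_u/\dd\mathbb{L}_u=D_T$, i.e.\ $\mathbb{L}_uD_T=1$, as an input rather than producing it.

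The repair is already in your hands. Take a grid of mesh $\delta$ with $K^2\delta\le c$, and apply Novikov on \emph{every} subinterval, including the last one, $[t_{N-1},T)$. Your sup-bound gives $\mathbb{L}_u\exp\big(\tfrac12\int_{t_{N-1}}^T\|f(s,Z_s)\|^2\dd s\big)<\infty$. After a deterministic time change, or after extending $W^\star$ to a Brownian motion on $[0,T]$ as the paper does, the stochastic exponential on that interval is therefore a uniformly integrable martingale closed at $T$. The tower property then gives $\mathbb{L}_uD_T=1$ directly. No localisation or non-explosion argument is needed.

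The paper reaches the same point differently. It uses the criterion of Example~3 in Section~6.2 of \cite{LiptserShiryaev2001}, $\sup_{t\le T}\mathbb{L}_u\exp(\delta\|f(t,Z_t)\|^2)<\infty$, which needs only \emph{marginal} exponential moments. It obtains these for every $u$ by writing the law of $Z_t$ under $\mathbb{L}_u$ as a Gaussian mixture over the radius $R$, whose mixing density has Gaussian tails. Your route gives a stronger, pathwise bound with less computation, but only for $\varsigma$-a.e.\ $u$, which is all the statement requires.
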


The proof is given in the appendix.

\begin{rem}
The requirement that the diffusivity is constant and  the nonlinearity lies in the range of $q$ is essential. Under these assumptions, Girsanov's theorem allows to derive $\PP_u \ll \mathbb{L}_u$ and $\PP^\circ_u \ll \mathbb{L}_u$ separately. 
However, a nonlinearity acting on the components
without noise, or a state-dependent diffusion coefficient, cannot be removed by Girsanov's theorem. In those cases
matching conditions at the terminal time are needed \cite{schauer2017guided,bierkens2020}, and for conditioning on a
ray they would have to hold along the ray. Thus, for the SDE $\dd Z_t = \left(BZ_t + f(Z_t)\right) \dd t + q(Z_t)\dd W_t$ the problem is much harder.
\end{rem}

\section{Incorporating a prior on the initial state}\label{sec:prior}

Up to this point we have assumed $z_0$ to be known, but this is a rather unrealistic assumption. It turns out that the
posterior of $Z_0$, i.e.\ the distribution of $Z_0$ conditional on the observed direction $u$, can be sampled exactly
for any linear model, i.e.\ \eqref{eq:Z} with $f\equiv0$: it is a mixture of Gaussian distributions over the radius $R$ of $X_T=Ru$, and the
mixing distribution is one-dimensional. Let $L(0)$ and $\Sigma(0)$ be as in Lemma~\ref{lem:condmeancov}, so that
$X_T\mid Z_0=z_0\sim N(L(0)z_0,\Sigma(0))$.

\begin{thm}\label{thm:prior}
Assume $Z_0\sim N(\nu,A)$ with $A\in\RR^{d\times d}$ positive semi-definite. Define
\[
  \bar m=L(0)\nu,\qquad S=L(0)AL(0)^\T+\Sigma(0),\qquad K=AL(0)^\T S^{-1},
\]
and
\[
  \xi=\big(u^\T S^{-1}u\big)^{-1/2},\qquad \mu=\xi^2\,u^\T S^{-1}\bar m .
\]
A sample from $Z_0$ conditional on the observed direction $u$ is obtained by
\begin{enumerate}
\item sampling $R$ from the density $R^{p-1}\phi(R;\mu,\xi^2)/M_{p-1}(\mu,\xi)$ on $(0,\infty)$;
\item sampling $Z_0\sim N\big(\nu+K(Ru-\bar m),\ A-KL(0)A\big)$.
\end{enumerate}
\end{thm}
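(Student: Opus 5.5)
The plan is to augment the state with the terminal position and condition in two stages: first on $X_T$, then on its direction. Under the linear model with $Z_0\sim N(\nu,A)$, the pair $(Z_0,X_T)$ is jointly Gaussian. By Lemma~\ref{lem:condmeancov}, $X_T=L(0)Z_0+\eta$ with $\eta\sim N(0,\Sigma(0))$ independent of $Z_0$. Hence $X_T\sim N(\bar m,S)$ and $\mathrm{Cov}(Z_0,X_T)=AL(0)^\T$. Standard Gaussian conditioning gives
\[ Z_0\mid X_T=x\sim N\big(\nu+K(x-\bar m),\,A-KL(0)A\big), \]
with $K=AL(0)^\T S^{-1}$. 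This is exactly step 2 once $x=Ru$. The conditional covariance does not depend on $x$, and $S$ is positive definite because $\Sigma(0)$ is, so $S^{-1}$ is well defined even if $A$ is singular.

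Next, observe that the direction $U=X_T/\|X_T\|$ is a function of $X_T$. So, given $U=u$, the law of $Z_0$ is obtained by first drawing $X_T$ from its conditional law given $U=u$ and then drawing $Z_0$ from $Z_0\mid X_T$. Since $U$ is $\sigma(X_T)$-measurable, $Z_0\mid(X_T,U)=Z_0\mid X_T$. Given $U=u$, $X_T$ lies on the ray, $X_T=Ru$, so only the conditional law of the radius $R=\|X_T\|$ given $U=u$ is needed.

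To find that law, write $X_T$ in polar coordinates $x=Ru$, with $\dd x=R^{p-1}\dd R\,\varsigma(\dd u)$ as in Section~\ref{sec:general}. The joint density of $(R,U)$ is $R^{p-1}\phi(Ru;\bar m,S)$ with respect to $\dd R\,\varsigma(\dd u)$. For fixed $u$, the conditional density of $R$ is therefore proportional to $R^{p-1}\phi(Ru;\bar m,S)$. Now reuse the Pythagorean decomposition from the proof of Theorem~\ref{thm:general case g and nabla log g}, with $\Sigma(t)$ replaced by $S$ and $m$ replaced by $\bar m$:
\[ (Ru-\bar m)^\T S^{-1}(Ru-\bar m)=(R-\mu)^2/\xi^2+\text{const}, \]
where $\xi$ and $\mu$ are as in the statement. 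So the density is proportional to $R^{p-1}\phi(R;\mu,\xi^2)$ on $(0,\infty)$, with normalising constant $M_{p-1}(\mu,\xi)$. This gives step 1.

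The only point needing care is making the two-stage conditioning rigorous, since $\{U=u\}$ is a null event. I would handle this by disintegration. For Borel sets $D\subseteq\RR^d$ and $E\subseteq S^{p-1}$, write $P(Z_0\in D,U\in E)$ as an integral over $x$ of $P(Z_0\in D\mid X_T=x)\ind_E(x/\|x\|)\phi(x;\bar m,S)\dd x$, then pass to polar coordinates. Reading off the integrand in $u$ shows that the two-step sampler yields a regular version of the conditional law of $Z_0$ given $U=u$, for $\varsigma$-almost every $u$. This is a routine Fubini argument, so I expect no real obstacle beyond bookkeeping.
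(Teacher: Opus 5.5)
Your proposal is correct and follows essentially the paper's proof. Both arguments factorise the joint Gaussian density of $(Z_0,X_T)$ at $(z_0,Ru)$ into the marginal $\phi(Ru;\bar m,S)$ times the Gaussian conditional of $Z_0$ given $X_T=Ru$, and then apply the Pythagorean decomposition of Theorem~\ref{thm:general case g and nabla log g} with $\Sigma(t)$ replaced by $S$. The only difference is the entry point: the paper starts from the posterior density $\phi(z_0;\nu,A)\,g(0,z_0)$ and rewrites it as a mixture over $R$, whereas you go through the conditional law of $R$ given $U=u$ and the tower property, which has the small advantage of never needing a density in $z_0$ when $A$ is singular.
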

\begin{proof}
The matrix $S$ is positive definite, since $\Sigma(0)$ is. By Definition~\ref{def:g}, the posterior density of $Z_0$
is proportional to
\[
  \phi(z_0;\nu,A)\,g(0,z_0)=\int_0^\infty R^{p-1}\,\phi(z_0;\nu,A)\,\phi\big(Ru;L(0)z_0,\Sigma(0)\big)\dd R .
\]
(If $A$ is singular, densities in $z_0$ are with respect to Lebesgue measure on the support of $N(\nu,A)$.) The
integrand is $R^{p-1}$ times the joint density of $(Z_0,X_T)$ at $(z_0,Ru)$, where $(Z_0,X_T)$ is jointly Gaussian with
$X_T\sim N(\bar m,S)$ and $\mathrm{Cov}(Z_0,X_T)=AL(0)^\T$. Factorising this joint density into the marginal of
$X_T$ and the conditional of $Z_0$ given $X_T$ gives
\[
  \phi(z_0;\nu,A)\,g(0,z_0)=\int_0^\infty R^{p-1}\,\phi(Ru;\bar m,S)\,
  \phi\big(z_0;\nu+K(Ru-\bar m),A-KL(0)A\big)\dd R .
\]
As in the proof of Theorem~\ref{thm:general case g and nabla log g}, with $\Sigma(t)$ replaced by $S$,
$\phi(Ru;\bar m,S)$ is proportional to $\phi(R;\mu,\xi^2)$ as a function of $R$. Hence the posterior of $Z_0$ is the
mixture of the Gaussian distributions in (2) over $R$, with mixing density proportional to
$R^{p-1}\phi(R;\mu,\xi^2)$ on $(0,\infty)$, whose normalising constant is $M_{p-1}(\mu,\xi)$.
\end{proof}

The variable $R$ is the distance of $X_T$ from the origin along the observed ray: jointly, $(Z_0,Ru)$ is a draw of
$(Z_0,X_T)$ conditional on $X_T/\|X_T\|=u$. Step (i) is a one-dimensional draw, which can be done exactly by rejection
sampling, since the density is log-concave on $(0,\infty)$. Step (ii) is a draw from a multivariate normal distribution. Starting the
conditioned process \eqref{eq:Zcirc} from the sampled $Z_0$ gives a draw from the smoothing distribution of the path
given the bearing.

For models with a block structure and a prior that respects it, the posterior factorises further. Note that by
Lemma~\ref{lem:kron}(1), $L(0)=\hat c(T)^\T\otimes I_p$.

\begin{lem}\label{lem:ind mu0 Pim0} Assume Assumption~\ref{ass:kron} and $Z_0\sim N(\nu, \hat{A} \otimes I_p)$ with $\hat A\in\RR^{k\times k}$ positive definite. If $m_0=m(0,Z_0)$ and $\mu_0=\langle u, m_0\rangle$, then  $\mu_0$ and $\Pi m_0$ are independent. Moreover
\[ \mu_0\sim N(u^\T L(0)\nu, \gamma) \qquad \Pi m_0 \sim N(\Pi L(0) \nu, \gamma \Pi), \]
where $\gamma=\hat{c}(T)^\T \hat{A} \hat{c}(T)$ and $\Pi=I_p- uu^\T$. The distribution of $\Pi m_0$ is a degenerate normal distribution on $\RR^p$, supported on $u^\perp$.
\end{lem}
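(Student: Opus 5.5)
By Lemma~\ref{lem:kron}(1), $m_0=L(0)Z_0=(\hat c(T)^\T\otimes I_p)Z_0$. This is a linear image of a Gaussian vector, so $m_0$ is Gaussian. The plan is to compute its mean and covariance, then read off the joint law of $(\mu_0,\Pi m_0)$ as a further linear image. Since these two quantities are jointly Gaussian, independence reduces to vanishing cross-covariance.

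\emph{Step 1: law of $m_0$.} The mean is $L(0)\nu$. For the covariance I will use the mixed-product rule for Kronecker products:
\[
(\hat c(T)^\T\otimes I_p)(\hat A\otimes I_p)(\hat c(T)\otimes I_p)=(\hat c(T)^\T\hat A\hat c(T))\otimes I_p=\gamma I_p .
\]
Hence $m_0\sim N(L(0)\nu,\gamma I_p)$, an isotropic Gaussian in $\RR^p$. Positive definiteness of $\hat A$ guarantees $\gamma>0$, because $\hat c(T)\neq0$: its first entry is $\hat c_1(0)$-type, namely $e_1^\T e^{\hat B T}$, a nonzero row vector since $e^{\hat BT}$ is invertible.

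\emph{Step 2: joint law.} I will write $(\mu_0,\Pi m_0)=(u^\T m_0,\Pi m_0)$, which is the image of $m_0$ under the linear map $x\mapsto(u^\T x,\Pi x)$, and is therefore jointly Gaussian. The three moments follow directly from Step 1:
\begin{itemize}
\item $\mathrm{Var}(\mu_0)=\gamma u^\T u=\gamma$, and the mean is $u^\T L(0)\nu$;
\item $\mathrm{Cov}(\Pi m_0)=\gamma\Pi\Pi^\T=\gamma\Pi$, using that $\Pi$ is symmetric and idempotent, and the mean is $\Pi L(0)\nu$;
\item $\mathrm{Cov}(\mu_0,\Pi m_0)=\gamma u^\T\Pi^\T=\gamma(\Pi u)^\T=0$.
\end{itemize}
Since the components are jointly Gaussian with zero cross-covariance, they are independent.

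The covariance $\gamma\Pi$ has rank $p-1$ and range $u^\perp$, so the law of $\Pi m_0$ is degenerate and supported on $u^\perp$.

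The only step needing care is the Kronecker computation in Step 1; the rest is routine. The point where the block structure is genuinely used is that the prior covariance $\hat A\otimes I_p$ pushes forward to an isotropic covariance. Without isotropy, the cross-covariance $u^\T\Pi^\T$ would be replaced by $u^\T S\Pi^\T$ for some general $S$, which need not vanish.
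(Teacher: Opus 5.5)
Your proof is correct and follows the paper's argument essentially step for step. The mixed-product identity gives $L(0)(\hat A\otimes I_p)L(0)^\T=\gamma I_p$; then $(\mu_0,\Pi m_0)$ is a linear image of $L(0)Z_0$ with cross-covariance $\gamma u^\T\Pi=0$, so joint normality gives independence. Your extra check that $\gamma>0$ is not in the paper but is valid, since $\hat c(T)^\T=e_1^\T e^{\hat BT}\neq 0$ by invertibility of $e^{\hat BT}$; the phrase about a ``$\hat c_1(0)$-type'' first entry is muddled and can be dropped.
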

\begin{proof}
By the mixed-product identity for Kronecker products,  using the expression for $L(0)$ from Lemma \ref{lem:kron}(1),
\begin{align*}
	L(0) \mbox{Cov}(Z_0)L(0)^\T &=   L(0)(\hat A\otimes I_p)L(0)^\T\\ &
	  = (\hat c(T)^\top\otimes I_p)(\hat A\otimes I_p)(\hat c(T)\otimes I_p)
\\&  = \big(\hat c(T)^\top\hat A\,\hat c(T)\big)\otimes I_p = \gamma I_p.
\end{align*}
This implies
$L(0) Z_0 \sim N\left(L(0) \nu, \gamma I_p\right)$.
The result now follows upon noting

\[ \Bm \mu_0 \\ \Pi m_0\Em = \Bm u^\T \\ \Pi\Em L(0) Z_0 \sim N\left(\Bm u^\T L(0)\nu \\  \Pi L(0)\nu \Em, \gamma \Bm u^\T  u & u^\T \Pi \\ \Pi^\T u & \Pi \Pi^\T \Em\right). 	\]
By symmetry, $\Pi=\Pi^\T$ and $\Pi \Pi^\T = \Pi$. Since $u$ is a unit vector $u^\T u=1$. Finally $\Pi u = 0$.
We conclude $\mbox{Cov}(\mu_0, \Pi m_0)=0$ which, by joint normality, implies independence.
\end{proof}

\begin{cor}\label{cor:prior-block}
Under the assumptions of Lemma~\ref{lem:ind mu0 Pim0}, a sample from $Z_0$ conditional on the observation can be obtained by
\begin{enumerate}
	\item sampling $\mu_0$ from $p(\mu_0) \propto M_{p-1}(\mu_0, \sigma(0)) \phi(\mu_0; u^\T L(0)\nu, \gamma)$;
	\item sampling $\Pi m_0$ from $p(\Pi m_0) \propto \exp\left(-\frac1{2\sigma(0)^2} \|\Pi m_0\|^2\right) \phi(\Pi m_0; \Pi L(0)\nu, \gamma \Pi)$, a density with respect to Lebesgue measure on $u^\perp$;
	\item sampling $Z_0 \mid L(0)Z_0 = m_0$, where $m_0=\mu_0 u + \Pi m_0$.
\end{enumerate}
In the notation of Theorem~\ref{thm:prior}, $S=(\gamma+\sigma(0)^2)I_p$, so that $\xi^2=\gamma+\sigma(0)^2$ and
$\mu=u^\T L(0)\nu$.
\end{cor}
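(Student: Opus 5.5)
The plan is to start from the posterior density of $Z_0$, which is proportional to $\phi(z_0;\nu,\hat A\otimes I_p)\,g(0,z_0)$, and factor it through the map $z_0\mapsto m_0=L(0)z_0$. By Corollary \ref{thm:gradlog_g}, $g(0,z_0)$ depends on $z_0$ only through $m_0$:
\[
g(0,z_0)=e^{C(0)}\exp\Big(-\tfrac{1}{2\sigma(0)^2}\|\Pi m_0\|^2\Big)M_{p-1}(\mu_0,\sigma(0)),\qquad \mu_0=\langle u,m_0\rangle .
\]
The likelihood is therefore a function of $m_0$ alone. Consequently the posterior of $Z_0$ given the observation splits into two pieces. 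The first is the posterior of $m_0$, equal to the prior law of $L(0)Z_0$ reweighted by $g$. The second is the conditional law of $Z_0$ given $L(0)Z_0=m_0$, which is left unchanged by the observation. This unchanged conditional is step (3). It is a Gaussian conditioning, $N\big(\nu+A L(0)^\T(\gamma I_p)^{-1}(m_0-L(0)\nu),\,A-AL(0)^\T L(0)A/\gamma\big)$ with $A=\hat A\otimes I_p$, and is well defined since $\gamma>0$ by positive definiteness of $\hat A$ and $\hat c_1(T)=1$.

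Next I treat the posterior of $m_0$. Write $m_0=\mu_0u+\Pi m_0$, an orthogonal decomposition of $\RR^p$ into $\mathrm{span}(u)\oplus u^\perp$. Lemma \ref{lem:ind mu0 Pim0} says that under the prior, $\mu_0$ and $\Pi m_0$ are independent Gaussians with the stated means and covariances. The prior density of $m_0$, with respect to Lebesgue measure on $\RR\times u^\perp$, is thus the product $\phi(\mu_0;u^\T L(0)\nu,\gamma)\,\phi(\Pi m_0;\Pi L(0)\nu,\gamma\Pi)$. The likelihood is also a product: one factor depends only on $\mu_0$, the other only on $\Pi m_0$. Hence the posterior factorises, $\mu_0$ and $\Pi m_0$ remain independent, and their marginals are exactly the densities in steps (1) and (2). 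Sampling them independently and then applying step (3) yields an exact posterior draw of $Z_0$.

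For the final remark, Lemma \ref{lem:kron}(3) gives $\Sigma(0)=\sigma(0)^2I_p$, and the Kronecker computation in the proof of Lemma \ref{lem:ind mu0 Pim0} gives $L(0)AL(0)^\T=\gamma I_p$. Therefore $S=(\gamma+\sigma(0)^2)I_p$, so $\xi^2=(u^\T S^{-1}u)^{-1}=\gamma+\sigma(0)^2$ and $\mu=\xi^2u^\T S^{-1}\bar m=u^\T L(0)\nu$.

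The main obstacle is justifying the disintegration through the non-injective linear map $L(0)$, that is, the claim that reweighting by a function of $m_0$ leaves the conditional of $Z_0$ given $m_0$ unchanged. I would argue this via the joint Gaussian law of $(Z_0,m_0)$. For bounded $\varphi$,
\[
\EE[\varphi(Z_0)g(0,Z_0)]=\EE\big[g\,\EE[\varphi(Z_0)\mid m_0]\big],
\]
which shows that the posterior is the mixture over the reweighted law of $m_0$ of the prior conditionals.
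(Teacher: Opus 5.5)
Your proposal is correct and follows the paper's argument. The likelihood $g(0,z_0)$ depends on $z_0$ only through $m_0$ and factorises into a $\mu_0$-part and a $\Pi m_0$-part; together with the prior independence from Lemma~\ref{lem:ind mu0 Pim0} this gives steps (1)--(2). Step (3) is the unchanged prior conditional given $L(0)Z_0=m_0$, and your identity $\EE[\varphi(Z_0)g(0,Z_0)]=\EE[g\,\EE[\varphi(Z_0)\mid m_0]]$ makes explicit what the paper only asserts.

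One small slip: $\hat c_1(T)=1$ need not hold under Assumption~\ref{ass:kron} (it can fail, e.g.\ when $\hat B_{k1}\neq0$). However, $\gamma>0$ still follows, because $\hat c(T)^\T=e_1^\T e^{\hat BT}\neq0$ by invertibility of $e^{\hat BT}$.
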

\begin{proof}
The density of the observation conditional on $Z_0=z_0$ is given by
\[ g(0,z_0) \propto M_{p-1}(\mu_0, \sigma(0))
\exp\left(-\frac1{2\sigma(0)^2} \|\Pi m_0\|^2\right),\]
by Corollary \ref{thm:gradlog_g}.
By Lemma \ref{lem:ind mu0 Pim0} the prior on $Z_0$ induces that $\mu_0$ and $\Pi m_0$ are independent and Gaussian; (i) and (ii) follow directly. $\Pi m_0$ and $\mu_0$ constrain $Z_0$ to $L(0)Z_0 = m_0$, which shows (3). The
expression for $S$ follows from $L(0)(\hat A\otimes I_p)L(0)^\T=\gamma I_p$ (see the proof of
Lemma~\ref{lem:ind mu0 Pim0}) and $\Sigma(0)=\sigma(0)^2I_p$.
\end{proof}

Here, $\mu_0$ and $\Pi m_0$ are independent a posteriori, and steps (ii) and (iii) are Gaussian draws. Step (i) is a
one-dimensional draw; it reduces to step (i) of Theorem~\ref{thm:prior} by introducing $R$ as an auxiliary variable.
Indeed, with $a=u^\T L(0)\nu$ and $\sigma=\sigma(0)$, the density of $\mu_0$ in step (1) is the marginal of
$(\mu_0,R)\mapsto R^{p-1}\phi(R;\mu_0,\sigma^2)\phi(\mu_0;a,\gamma)$ on $\RR\times(0,\infty)$. The marginal of $R$ is
proportional to $R^{p-1}\phi(R;a,\sigma^2+\gamma)$, which is step (i) of Theorem~\ref{thm:prior}, and
$\mu_0\mid R\sim N\big((\gamma R+\sigma^2a)/(\sigma^2+\gamma),\ \sigma^2\gamma/(\sigma^2+\gamma)\big)$.

\section{Sequential Monte Carlo for filtering}\label{sec:smc}

We now combine the results of the previous sections into a particle filter for the setting of the introduction. The latent process follows the model \eqref{eq:Z}, with $f$ satisfying Assumption~\ref{ass:dh}. The initial state has a Gaussian prior $Z_0\sim N(\nu,A)$. At times $0<t_1<\dots<t_{n_{\mathrm{obs}}}$ the directions $u_n=X_{t_n}/\|X_{t_n}\|$ are observed exactly. Let $\pi_n$ denote the filtering distribution of $Z_{t_n}$ given $u_{1:n}$, with $\pi_0=N(\nu,A)$. We wish to approximate $\pi_1,\dots,\pi_{n_{\mathrm{obs}}}$ and the likelihood $p(u_{1:n_{\mathrm{obs}}})=\prod_np(u_n\mid u_{1:n-1})$.

\subsection{One observation interval} By the Markov property, the interval $[t_{n-1},t_n]$ only involves the state $z$ at $t_{n-1}$ and the next direction $u_n$. Let $h_n(z)=h_{u_n}(t_{n-1},z)$ be the density of the direction of $X_{t_n}$ given $Z_{t_{n-1}}=z$ (Section~\ref{sec:nonlinear}, with $T=t_n$), and let $g_n(s,z)=g_{u_n}(s,z)$, $s\in[t_{n-1},t_n)$, be the guiding function of the linear model for the direction $u_n$ at time $t_n$. Then
$p(u_n\mid u_{1:n-1})=\int h_n(z)\,\pi_{n-1}(\dd z)$, 
but $h_n$ is intractable when $f\not\equiv0$. Theorem~\ref{prop:ac-nonlinear}, applied on $[t_{n-1},t_n]$, gives a way around this. Let $Z^\circ$ be the guided process \eqref{eq:guided-nonlinear} on $[t_{n-1},t_n]$ for the direction $u_n$, started at $Z^\circ_{t_{n-1}}=z$, with law $\PP^\circ_{u_n}$, and let
\begin{equation}\label{eq:Psi-n}
\Psi_n=\exp\Big(\int_{t_{n-1}}^{t_n}f(s,Z^\circ_s)^\T q^\T\nabla_z\log g_n(s,Z^\circ_s)\,\dd s\Big).
\end{equation}
The likelihood ratio \eqref{eq:weight-nonlinear} has expectation one under the law of the guided process, so that
\begin{equation}\label{eq:h-gPsi}
h_n(z)=g_n(t_{n-1},z)\,\PP^\circ_{u_n}\big[\Psi_n\big],
\end{equation}
and a path of the guided process with weight $\Psi_n$ is a weighted sample from the process conditioned on $u_n$. The guiding function $g_n(t_{n-1},z)$ is thus a tractable approximation of $h_n(z)$, and $\Psi_n$ corrects for its error. 

The guiding function need not come from the linear part $Bz$ of \eqref{eq:Z}. If $\tilde Bz$ is another linear drift with $(B-\tilde B)z$ in the range of $q$, then \eqref{eq:Z} is of the same form with $B$ replaced by $\tilde B$ and $f(t,z)$ by $f(t,z)+q^+(B-\tilde B)z$. The added term grows linearly in $z$, so that Theorem~\ref{prop:ac-nonlinear} applies by Corollary~\ref{cor:linear-growth} whenever $f$ satisfies Assumption~\ref{ass:dh}; a constant term in the linear drift is handled in the same way. As everything is conditional on $Z_{t_{n-1}}=z$, the linear model may moreover be chosen as a function of $z$. 

\subsection{Auxiliary particle filter} Identity \eqref{eq:h-gPsi} fits the auxiliary particle filter of \citet{PittShephard1999}, which we use in the formulation of \citet[Chapter~10]{Chopin2020introduction}. Suppose the particles $z^i$ with normalised weights $W^i$, $i=1,\dots,N$, approximate $\pi_{n-1}$. The guiding function serves as the auxiliary function, an approximation of the predictive density $h_n$ of the next direction, and twists the resampling: particle $i$ gets the first-stage weight $\lambda^i=W^ig_n(t_{n-1},z^i)$, and the ancestors are drawn with probabilities proportional to $\lambda^i$. Each resampled particle is then propagated by the guided process, which ends on the observed ray, and gets the second-stage weight $\Psi_n^i$. The particles with weights proportional to $\Psi_n^i$ approximate $\pi_n$, and
\[ \hat\ell_n=\Big(\sum_i\lambda^i\Big)\frac1N\sum_i\Psi_n^i \]
estimates $p(u_n\mid u_{1:n-1})$: given the particles at $t_{n-1}$, its expectation is $\sum_iW^ih_n(z^i)$ by \eqref{eq:h-gPsi}, and the product of the $\hat\ell_n$ is an unbiased estimate of the likelihood. Resampling before propagation spends the simulation of guided paths on particles that are compatible with $u_n$, and gives copies of a selected particle independent paths, so that they end at different points of the ray. For $f\equiv0$ the filter is fully adapted: the resampling uses the exact predictive density, the particles are propagated by the exact conditioned process, and $\Psi_n=1$. A bootstrap filter cannot be used with exact directions, as almost surely no particle propagated by the unconditioned dynamics lands on the observed ray.

\subsection{Exact first update}\label{sec:smc-first}
At the first observation the prior on the initial state is Gaussian, so that Theorem~\ref{thm:prior} can be used with the linear model. Instead of starting from draws from the prior, we draw $Z_0$ exactly from its posterior given $u_1$ under the linear model: the radius of $X_{t_1}=Ru_1$ is drawn from its one-dimensional density, and $Z_0$ from its Gaussian conditional distribution given $X_{t_1}=Ru_1$. The particles are then propagated by the guided process towards $u_1$ and weighted by $\Psi_1$, and the first likelihood increment is estimated by $\tilde p(u_1)$ times the average of $\Psi_1$, where $\tilde p(u_1)$ is the density of the first direction under the linear model, i.e.\ the normalising constant in Theorem~\ref{thm:prior}. This is again unbiased. The exact first update avoids reweighting the broad prior by an exact bearing in a single step.

\subsection{Algorithm}
Algorithm~\ref{alg:guided-filter} summarises the filter. For simplicity it discretises the guided process by the Euler scheme and $\log\Psi_n$ by a left-point Riemann sum, on a grid $t_{n-1}=s_0<s_1<\dots<s_M=t_n$. Any other scheme can be used; Section~\ref{sec:num-ncs} treats a stiff drift by splitting. If the linear model is chosen per particle, $g_n$ and $f$ in \textsc{GuidedProp} are those for the linear model of the ancestor. 

\begin{algorithm}[!t]
\caption{Guided auxiliary particle filter for the model \eqref{eq:Z} with prior $Z_0\sim N(\nu,A)$, exact directions $u_1,\dots,u_{n_{\mathrm{obs}}}$ observed at times $t_1<\dots<t_{n_{\mathrm{obs}}}$, and $N$ particles.}
\label{alg:guided-filter}
\begin{algorithmic}[1]
\Procedure{GuidedProp}{$z,n$} \Comment{guided process on $[t_{n-1},t_n]$ towards $u_n$}
  \State $\log\Psi\gets0$
  \For{$j=0,\dots,M-1$}
    \State $\delta\gets s_{j+1}-s_j$; \quad $r\gets\nabla_z\log g_n(s_j,z)$ \Comment{Theorem~\ref{thm:general case g and nabla log g}}
    \State $\log\Psi\gets\log\Psi+f(s_j,z)^\T q^\T r\,\delta$
    \State $z\gets z+\big[Bz+qf(s_j,z)+Qr\big]\delta+q\sqrt{\delta}\,\zeta$, \quad $\zeta\sim N(0,I_m)$
  \EndFor
  \State \Return $(z,\Psi)$
\EndProcedure
\Statex
\Statex \textbf{First observation} (Section~\ref{sec:smc-first})
\State compute $\bar m$, $S$, $\xi$, $\mu$ of Theorem~\ref{thm:prior} for the direction $u_1$ and the linear model on $[0,t_1]$, and $\tilde p(u_1)=\int_0^\infty R^{p-1}\phi(Ru_1;\bar m,S)\dd R$
\For{$i=1,\dots,N$}
  \State draw $z_0^i$ from the posterior of $Z_0$ given $u_1$ \Comment{Theorem~\ref{thm:prior}}
  \State $(z^i,\Psi^i)\gets\Call{GuidedProp}{z_0^i,1}$
\EndFor
\State $\hat\ell_1\gets\tilde p(u_1)\,\frac1N\sum_{i}\Psi^i$; \quad $W^i\gets\Psi^i/\sum_{k}\Psi^k$
\Statex
\Statex \textbf{Further observations}
\For{$n=2,\dots,n_{\mathrm{obs}}$}
  \State $\lambda^i\gets W^i\,g_n(t_{n-1},z^i)$, $i=1,\dots,N$ \Comment{first-stage weights}
  \State draw ancestors $a^1,\dots,a^N$ by systematic resampling with weights proportional to $\lambda^1,\dots,\lambda^N$
  \For{$i=1,\dots,N$}
     \State $(\hat z^i,\Psi^i)\gets\Call{GuidedProp}{z^{a^i},n}$ \Comment{second-stage weights}
  \EndFor
  \State $z^i\gets\hat z^i$; \quad $\hat\ell_n\gets\big(\sum_{i}\lambda^i\big)\frac1N\sum_{i}\Psi^i$; \quad $W^i\gets\Psi^i/\sum_{k}\Psi^k$
\EndFor
\State \Return the log-likelihood estimate $\sum_{n}\log\hat\ell_n$, and the weighted particles $(z^i,W^i)$ approximating $\pi_n$, $n=1,\dots,n_{\mathrm{obs}}$
\end{algorithmic}
\end{algorithm}

\section{Numerical illustrations}\label{sec:numerics}

\subsection{The discretised conditioned process ends on the ray}\label{sec:num-paths}

We simulate the conditioned process $Z^\star$ of \eqref{eq:Zcirc} for the nearly-constant-velocity model
(Example \ref{ex:Nearly-constant-velocity}, $p=2$) with  $T=1$, $x_0=(1,0)$, $v_0=(0,1.5)$ and observed angle
$y_T=-\pi/5$. For discretisation we use the implicit scheme from Lemma \ref{lem:impl_euler_ncv} with 
$\delta=T/M$.
Figure \ref{fig:paths}(a) shows $25$ sample paths for $M=400$. All paths end on the ray, in agreement with
Theorem \ref{lem:strong-nonlinear}(2) with $f\equiv0$; numerically, the distance $\|\Pi X_T\|$ to the ray at time $T$, averaged over the paths, is $2\cdot10^{-17}$, i.e.\ zero up to rounding.

With $f\equiv0$, the law of the conditioned process is the conditional law given $U=u$ (Theorem \ref{lem:strong-nonlinear}(2)). Hence, writing $X_T=Ru$ in polar coordinates and using $\Sigma(0)=\si(0)^2I_p$, the terminal radius $R=\langle X_T^\star,u\rangle$ has density proportional to $r\,\phi(r;\mu(0,z_0),\si(0)^2)$ on $(0,\infty)$. Figure \ref{fig:paths}(b) compares a histogram of $R$ from $2\cdot 10^4$ discretised paths ($M=200$) with this density. The agreement is excellent. 

\begin{figure}[t]
\centering
\includegraphics[width=\textwidth]{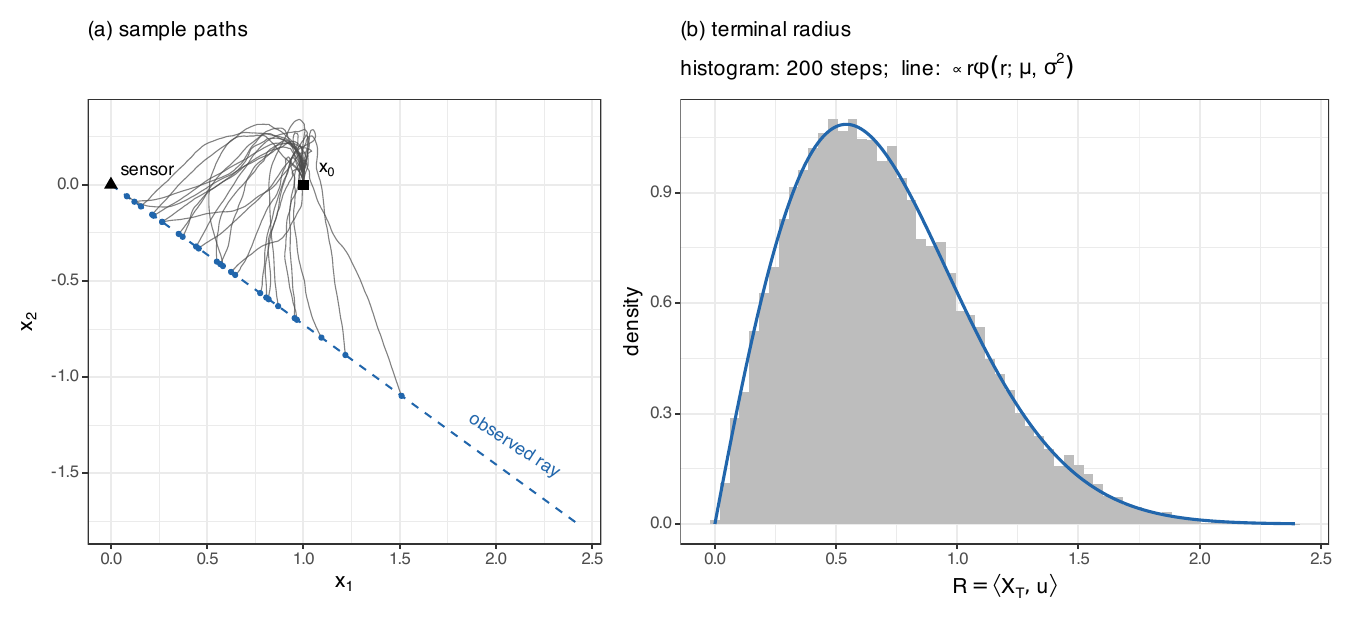}
\caption{Conditioned nearly-constant-velocity process obtained by discretising \eqref{eq:Zcirc}. (a) $25$ paths; dots are the endpoints. (b) Histogram of the terminal radius $\langle X_T,u\rangle$ from $2\cdot10^4$ discretised paths, and the exact density $\propto r\,\phi(r;\mu(0,z_0),\si(0)^2)$.}
\label{fig:paths}
\end{figure}

\subsection{A nonlinear drift: nearly-constant-speed target}\label{sec:num-ncs}

 As an example of practical relevance we consider a target that moves at a nearly constant speed while its heading changes randomly. Such a model is natural for ships and aircraft in bearings-only target motion analysis:
\begin{equation}\label{eq:ncs}
\dd X_t=V_t\dd t,\qquad \dd V_t=b(V_t)\dd t+\eta\dd W_t,\qquad b(v)=-\ka\left(\|v\|^2-v_*^2\right)v,
\end{equation}
with $X_t,V_t\in\RR^2$. The drift pulls the speed $\|V_t\|$ towards $v_*$ at rate $2\ka v_*^2$ and leaves the heading unaffected. 

 For \eqref{eq:ncs} the $h$-function is intractable. Let $\tilde g$ be the guiding function (Definition \ref{def:g}) of a linear auxiliary model
\[ \dd X_t=V_t\dd t,\qquad \dd V_t=(\tilde JV_t+\tilde\beta)\dd t+\eta\dd W_t, \]
with $\tilde J\in\RR^{2\times2}$ symmetric and $\tilde\beta\in\RR^2$. This auxiliary model does not satisfy Assumption \ref{ass:kron} (block structure) unless $\tilde J$ is a multiple of the identity. Still, $X_T\mid Z_t=z$ is Gaussian with explicitly computable mean and (in general non-isotropic) covariance: $\tilde g$ and $\tilde r=\nabla_z\log\tilde g$ are given in closed form by Theorem \ref{thm:general case g and nabla log g}. The guided process has velocity evolving according to the SDE
\[ \dd V^\circ_t=\left[b(V^\circ_t)+\eta^2\tilde r_V(t,Z^\circ_t)\right]\dd t+\eta\dd W^\circ_t, \]
where $\tilde r_V$ denotes the velocity block of $\tilde r$; $\PP^\circ$ denotes the law of $Z^\circ$ started at $Z^\circ_0=z$. We cannot apply Theorem \ref{prop:ac-nonlinear} as $b$ is not bounded. To resolve this, we fix a large constant $K$ and redefine $b$ to $b(v)=\bar{b}(P_K v)$, where $\bar{b}(v)=-\ka\left(\|v\|^2-v_*^2\right)v$ and $P_K v= v \min(1,K/\|v\|)$. Thus, we evaluate the original drift after first  projecting $v$ onto a ball of radius $K$. A large value of $K$ will make the projection step irrelevant for numerical simulations. With this redefinition $b$ is bounded, and the auxiliary drift $\tilde Jv+\tilde\beta$ adds a term of linear growth, so that Theorem \ref{prop:ac-nonlinear} applies by Corollary \ref{cor:linear-growth} and gives
\begin{equation}\label{eq:girsanov}
h(0,z)=\tilde g(0,z)\,\PP^\circ\left[\Psi\right],\qquad \log\Psi=\int_0^T\left(b(V^\circ_s)-\tilde JV^\circ_s-\tilde\beta\right)^\T\tilde r_V(s,Z^\circ_s)\dd s.
\end{equation}
 We consider three auxiliary models:
\begin{itemize}
\item the \emph{NCV auxiliary} (NCV=``Nearly Constant Velocity''), $\tilde J=0$, $\tilde\beta=0$, for which $\tilde g$ is given by Corollary \ref{thm:gradlog_g};
\item the \emph{speed-circle auxiliary}: the linearisation of $b$ at $\bar v=v_*v_0/\|v_0\|$, the point on the circle $\{v \colon \|v\|=v_*\}$ in the current heading, where $v_0$ is the velocity at the start of the interval. This gives $\tilde J=-2\ka v_*^2\,ee^\T$ with $e=v_0/\|v_0\|$, and $\tilde\beta=-\tilde J\bar v$: the speed along the current heading relaxes to $v_*$, and there is no drift orthogonal to $e$; 
\item the \emph{Taylor auxiliary}: the first-order Taylor expansion of $b$ at $v_0$ itself, $\tilde J=Db(v_0)=-\ka\big[(\|v_0\|^2-v_*^2)I+2v_0v_0^\T\big]$ and $\tilde\beta=b(v_0)-\tilde Jv_0$. 
\end{itemize}
If $\kappa$ is large, mean reversion of the speed to $v_*$ is strong, so that the speed is close to $v_*$ during most of each observation interval. In that case the speed-circle auxiliary, which linearises $b$ on the speed circle, is likely to perform better than the Taylor auxiliary, which linearises $b$ at the initial speed: its drift is closer to $b$, so that the guided process is closer to the true conditioned process and the weights $\Psi$ vary less.

\subsubsection{Discretisation.} The guided process is simulated on the grid $t_j=h(1-(1-j/M)^2)$ with $M=200$ steps per interval of length $h$. As in Section \ref{sec:discr_ncv}, we simulate the processes by simulation of the angular and radial processes, which are dependent unless $\kappa=0$.  As the flow of $\dot v=b(v)$ can be  solved exactly,
\begin{equation}
	\label{eq:exactflow}
	\Phi_t(v(0)):= v(t) = v(0)\left(\frac{\alpha_0}{1-(1-\alpha_0)e^{-2\kappa v_*^2 t}}\right)^{1/2}, \quad \text{with} \quad \alpha_0 = \frac{v_*^2}{\|v(0)\|^2}, 
\end{equation} 
we use a splitting scheme. The guiding term and the noise are added by an explicit and implicit Euler step for the radial and angular parts respectively. Details are given in Section \ref{sec:discretisation_nsc_model} of the appendix. The integral in \eqref{eq:girsanov} is approximated by a left-point Riemann sum.

\subsubsection{Validation.} Table \ref{tab:ncs-h} compares, over a single interval, $\tilde g(0,z)\PP^\circ[\Psi]$ with a brute-force estimate of $h(0,z)$. Here $h(0,z)$ is the density of the angle of $X_h$ at a fixed $\psi$. The brute-force estimate uses a histogram of $10^6$ forward simulations of \eqref{eq:ncs}. The setting is $\ka=400$, $v_*=0.05$, $\eta=0.01$, $h=1$ and $x_0=(-0.05,0.7)$. When $v_0$ is close to the speed circle all auxiliary models work well. When $v_0=(0.02,-0.06)$ is not, the NCV guiding function alone is off by $25\%$ ($35.6$ versus $47.1$), and the Taylor guiding function by $11\%$ in the other direction ($52.5$), but the correction $\PP^\circ[\Psi]$ removes this bias in both cases. The speed-circle auxiliary gives $\tilde g$ close to $h$, and weights with $\mathrm{ESS}/n=0.99$, against $0.81$ and $0.73$ for the NCV and Taylor auxiliaries.

\begin{table}[t]
\centering
\begin{tabular}{llrrr}
\toprule
$v_0$ & method & $\tilde g$ & estimate of $h$ & ESS$/n$ \\
\midrule
$(0.000, -0.051)$ & brute force & -- & $44.0 \pm 0.22$ & -- \\
$(0.000, -0.051)$ & NCV auxiliary & 43.2 & $43.7 \pm 0.06$ & 0.97 \\
$(0.000, -0.051)$ & linearised at velocity & 43.8 & $43.7 \pm 0.03$ & 0.99 \\
$(0.000, -0.051)$ & speed-circle auxiliary & 43.2 & $43.7 \pm 0.03$ & 0.99 \\
\midrule
$(0.020, -0.060)$ & brute force & -- & $47.1 \pm 0.24$ & -- \\
$(0.020, -0.060)$ & NCV auxiliary & 35.6 & $47.4 \pm 0.16$ & 0.81 \\
$(0.020, -0.060)$ & linearised at velocity & 52.5 & $47.5 \pm 0.20$ & 0.73 \\
$(0.020, -0.060)$ & speed-circle auxiliary & 43.6 & $47.1 \pm 0.04$ & 0.99 \\
\bottomrule
\end{tabular}
\caption{Density $h(0,z)$ of the direction of $X_1$ at a fixed angle for the nearly-constant-speed model ($\ka=400$), for two initial velocities. The brute-force estimate uses $10^6$ forward simulations; the guided estimates $\tilde g(0,z)\PP^\circ[\Psi]$ use $2\cdot10^4$ guided paths, simulated with the implicit scheme with $M=200$. $\mathrm{ESS}/n$ is the relative effective sample size of the weights $\Psi$.}
\label{tab:ncs-h}
\end{table}

\subsubsection{Filtering application.} We use the classical bearings-only scenario of \citet{gordon1993novel}, formulated in continuous time, with dynamics \eqref{eq:ncs}.
\begin{itemize}
\item \emph{Model}: \eqref{eq:ncs} with $v_*=0.05$, $\ka\in\{100,400,1600\}$ and $\eta\in\{0.01,0.02\}$, observed at times $1,2,\dots,24$. For each $(\ka,\eta)$ the true path is simulated with the same driving noise.
\item \emph{True initial state}: $x_0=[-0.05, 0.7]$, $v_0=[0.001, -0.055]$, as in \citet{gordon1993novel}.
\item \emph{Prior}: we impose independent priors on the initial position and velocity where 	\[ X_0 \sim N_2\left([0, 0.4]^\T, \mbox{diag}(0.5^2, 0.3^2)\right) \quad \text{and}\quad  V_0\sim N_2\left([0, 0]^\T, 0.05^2I_2\right).\] The prior on the initial position is mildly informative. The prior on the initial velocity is centred at zero, with a spread comparable to the regulated speed $v_*$ in every direction, so that it carries hardly any information on the heading.
\item \emph{Observations}: the exact bearings $u_n=X_n/\|X_n\|$.
\end{itemize}
Figure \ref{fig:ncs-data} shows the observed bearings together with a visualisation of the prior on the initial position. The true initial state and initial velocity vector of the data are displayed in black. 

\begin{figure}[t]
\centering
\includegraphics[width=0.75\textwidth]{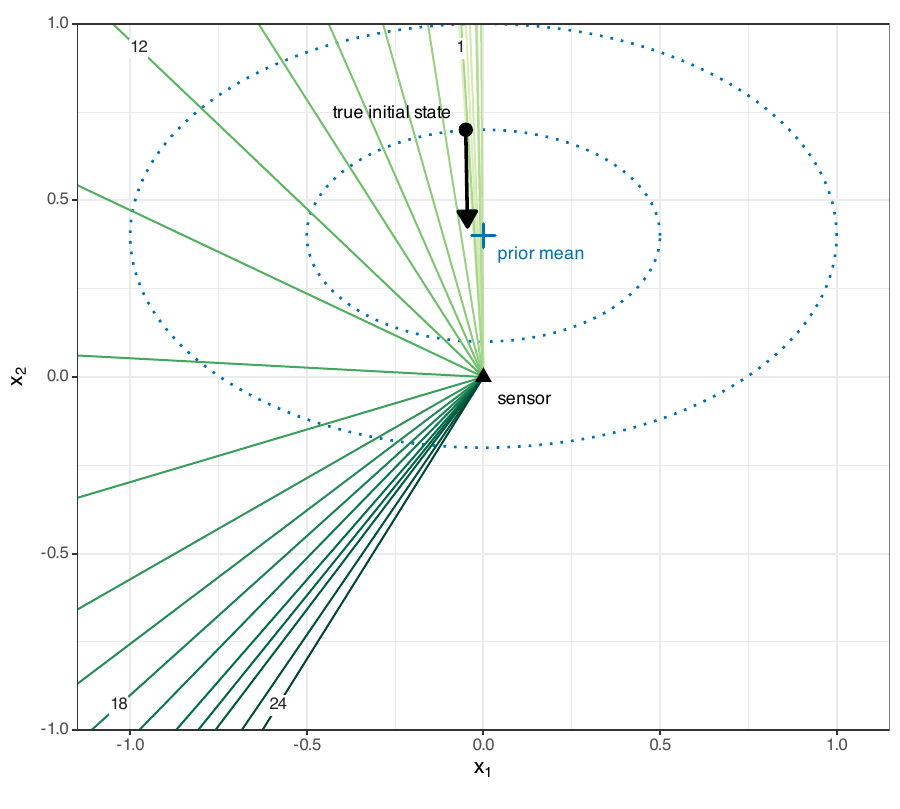}
\caption{Data and prior for the nearly-constant-speed target ($\ka=400$, $\eta=0.01$). Observed bearings from the sensor at the origin, shaded from light green (first observation) to dark green (last observation), some labelled with their observation number; prior on the initial position (mean and dotted $1$- and $2$-standard-deviation ellipses); true initial position and velocity (the arrow shows the displacement over $5$ units of time). The prior on the initial velocity, centred at zero, is not shown.}
\label{fig:ncs-data}
\end{figure}
We call $\|X_t\|$, the distance between the target and the sensor at the origin, the \emph{range}. As the bearings are exact, the direction $X_{t_n}/\|X_{t_n}\|=u_n$ is known at each observation time, so that the range is the only unknown component of the position there. 

We apply Algorithm~\ref{alg:guided-filter}. In the notation of \eqref{eq:Z}, $Z=(X,V)$, the linear part is the NCV model, $q=[0,\eta I_2]^\T$ and $f=b/\eta$, so that $\Psi_n$ in \eqref{eq:Psi-n} is $\Psi$ in \eqref{eq:girsanov} with the NCV auxiliary. At the first observation all filters use the exact first update with the NCV auxiliary. From the second observation onwards, each filter guides with its own auxiliary model, chosen per particle from its velocity at $t_{n-1}$, and $\Psi$ is that of \eqref{eq:girsanov}. 

\subsubsection{Results for filtering application.}

We simulated a trajectory of the nearly constant speed model with $\kappa=400$ and $\eta=0.01$. We simulated the data for $t\in[0,24]$ by the splitting scheme (exact flow of $b$, then an Euler step for the noise) with step size $10^{-3}$. From the simulated trajectory we retained as observations the bearings at times $1,2,\ldots, 24$. We apply all three guided particle filters using $N=5000$ particles and $M=200$ for the discretisation of the guided process.

In Figure \ref{fig:ncs-scenario} we show the results for the three guided particle filters. We show the mean of the filtering distribution and bars with endpoints determined by the $10\%$ and $90\%$ quantiles of the filtering distribution of the range. The three filters give nearly identical results, with the Taylor auxiliary performing slightly better. One can see that uncertainty is large at about the first $7$ time instances and decreases afterwards; the true range lies outside the band only at observations $5$ and $11$, and for the speed-circle auxiliary also at observation $4$.  We also ran the filter  without specific handling of the first state, initialising particles from the prior, and observed that in that case  the three filters give different, worse, results.

Since the prior on the initial velocity carries hardly any information on the heading, the range is poorly determined during the first observations, until the bearing starts to turn. Figure \ref{fig:ncs-clouds} shows the particles of single runs with $N=5000$ particles at observations $4$, $8$, $12$ and $15$. As the bearings are exact, the propagated particles lie on the observed rays, so that the filtering distribution of the position at an observation time is that of the range along the ray. The three filters give practically the same, broad filtering distribution. The exact first update is essential here. When the filters start from draws from the prior instead, a single run often represents the broad filtering distribution of the early phase by a few narrow clusters, whose location varies from run to run and between the filters.

\subsubsection{Comparison with shifted Rayleigh filter.}
We compare with the shifted Rayleigh filter from  \citep{Clark2007SRF} which approximates the filtering distribution by a Gaussian. For exact bearings its update is the exact conditioning of the Gaussian prediction on the observed direction, as in Theorem~\ref{thm:prior}, followed by moment matching. As the filter needs linear dynamics, the prediction uses the NCV model, or the speed-circle or Taylor linearisation of $b$ at the filtered mean velocity.

 Figure~\ref{fig:ncs-srf} shows the result for the data of Figure~\ref{fig:ncs-scenario}. In the early phase all three variants place the target too far from the sensor, with errors of up to $0.45$. Their $10$--$90\%$ bands no longer contain the true range from observation $9$ (speed-circle), $10$ (Taylor) or $13$ (NCV) onwards, and those of the speed-circle and Taylor predictions already miss it at most of the observations $4$--$7$. With the NCV prediction the range is not observable, and the filtered range drifts away from the truth. With the speed-circle and Taylor predictions the filter follows the target from observation $15$ onwards, but with errors of $0.02$--$0.04$ (Figure~\ref{fig:ncs-srf}(c)), about ten times larger than those of the guided filters in Figure~\ref{fig:ncs-scenario}(c), and with bands that do not contain the true range at any of the observations $15$--$24$. A Gaussian can represent neither the broad, skewed filtering distribution of the range in the early phase nor the dependence between position and velocity induced by the speed regulation. The shifted Rayleigh filter is very fast (about $0.3$\,ms per run of $24$ observations, against $3$--$5$\,s for the guided filters with $N=5000$ and $M=200$ on a single core), but this speed comes at the cost of the errors described above.

\subsubsection{A small study for comparing guided and shifted Rayleigh filters.}
To compare the two filters beyond a single data set, we simulated $30$ data sets for each $\ka\in\{250,500,1000\}$, with $\eta=0.01$. For each data set the true initial position and velocity are drawn from the prior, and the same driving noise and initial state are used for the three values of $\ka$. Both filters use the speed-circle approximation: the guided filter ($N=5000$) as its auxiliary model, the shifted Rayleigh filter for its prediction. As the exact filtering distribution and likelihood are not available, we use the guided filter with the speed-circle auxiliary and $N=10^5$ particles as a reference. The results are summarised in Figure~\ref{fig:ncs-study}.
\begin{itemize}
\item The filtered mean range of the guided filter is $4$--$9$ times closer to that of the reference than the one of the shifted Rayleigh filter (median distance $0.007$--$0.008$, against $0.035$--$0.056$).
\item The log-likelihood of the shifted Rayleigh filter is biased downwards, with median error $-4.7$, $-5.4$ and $-8.8$ for $\ka=250$, $500$ and $1000$, and errors of $30$--$120$ for targets that pass close to the sensor. For the guided filter the median absolute error is $0.24$--$0.96$, and all errors lie between $-9.1$ and $4.4$.
\item As the data are drawn from the prior, the $10$--$90\%$ bands of the exact filter contain the true range with probability $0.8$. The fraction of observations at which the band contains the true range is $0.76$--$0.78$ for the reference, $0.70$--$0.77$ for the guided filter and $0.58$--$0.60$ for the shifted Rayleigh filter, whose bands are too narrow. From observation $13$ onwards the RMSE of the filtered range is about that of the reference for the guided filter, and $2$--$3$ times larger for the shifted Rayleigh filter.
\end{itemize}

\subsubsection{Moving sensor.}
Our approach carries through, with minor modifications, when the sensor is moving. We provide details in Appendix~\ref{sec:moving-sensor}.

\subsubsection{Estimating the loglikelihood.}
Figure \ref{fig:ncs-perf} shows the effective sample size and the RMSE of the filtered position over $20$ runs for each of the six combinations of $\ka$ and $\eta$, with $N=5000$ and $N=10\,000$ particles. The log-likelihood estimates are shown in Figure \ref{fig:ncs-loglik}, and below we summarise their spread by the interquartile range; we call a run collapsed if its log-likelihood estimate is more than $20$ below the median. 
\begin{itemize}
\item The guided filters are stable in all settings. Only $1$ of the $720$ runs collapses: a run of the Taylor auxiliary with $N=10\,000$ for $\ka=1600$ and $\eta=0.01$. For $\ka\le400$ the interquartile range of the log-likelihood estimates is at most $0.61$ with $N=5000$ and at most $0.52$ with $N=10\,000$; for $\ka=1600$ it is between $0.78$ and $1.8$.
\item The effective sample size decreases with $\ka$ and $\eta$, from $\mathrm{ESS}/N=0.42$--$0.55$ at $\ka=100$ to $0.20$--$0.36$ at $\ka=1600$, $\eta=0.01$ and $0.10$--$0.19$ at $\ka=1600$, $\eta=0.02$. A larger $\eta$ makes the path between observations less predictable, so that the weights $\Psi$ vary more.
\item The auxiliary models differ mainly in the effective sample size. For $\ka=100$ the Taylor auxiliary is best ($\mathrm{ESS}/N\approx0.52$--$0.55$ with $N=5000$). For $\ka\ge400$ the speed-circle auxiliary has the largest effective sample size, and for $\ka=1600$, $\eta=0.02$ the Taylor auxiliary falls back to the level of the NCV auxiliary.
\item The median RMSE of the filtered position is between $0.11$ and $0.14$ in all settings. It is dominated by the first observations, where the range is poorly determined (Figure \ref{fig:ncs-scenario}), and hardly distinguishes the filters.
\end{itemize}

\emph{Reference values and time discretisation.} As the likelihood of the exact bearings is not available in closed form, we use as a reference the median of $5$ runs of the guided filter with the speed-circle auxiliary and $N=10^6$ particles (dashed lines in Figure~\ref{fig:ncs-loglik}). For $\ka\le400$ the three filters agree with this reference to within $0.3$. For $\ka=1600$ they differ by up to about $1.5$. As each filter is unbiased for the likelihood of its own discretised guided process, this reflects the time discretisation: refining the grid from $M=200$ to $M=800$ steps per interval for $\ka=1600$ and $\eta=0.02$ shows that with $M=200$ the log-likelihood is overestimated by about $0.5$ with the speed-circle auxiliary, $0.1$ with the Taylor auxiliary and $1.1$ with the NCV auxiliary, whose drift differs most from $b$. A common part of this bias arises in the first interval, where particles whose initial speed is far from $v_*$ relax to it within the first time step of the grid. These biases do not affect the comparison of the variability of the estimates above, but for strong speed regulation a finer grid or the speed-circle auxiliary should be used when accurate likelihood values are needed.

\begin{figure}[t]
\centering
\includegraphics[width=\textwidth]{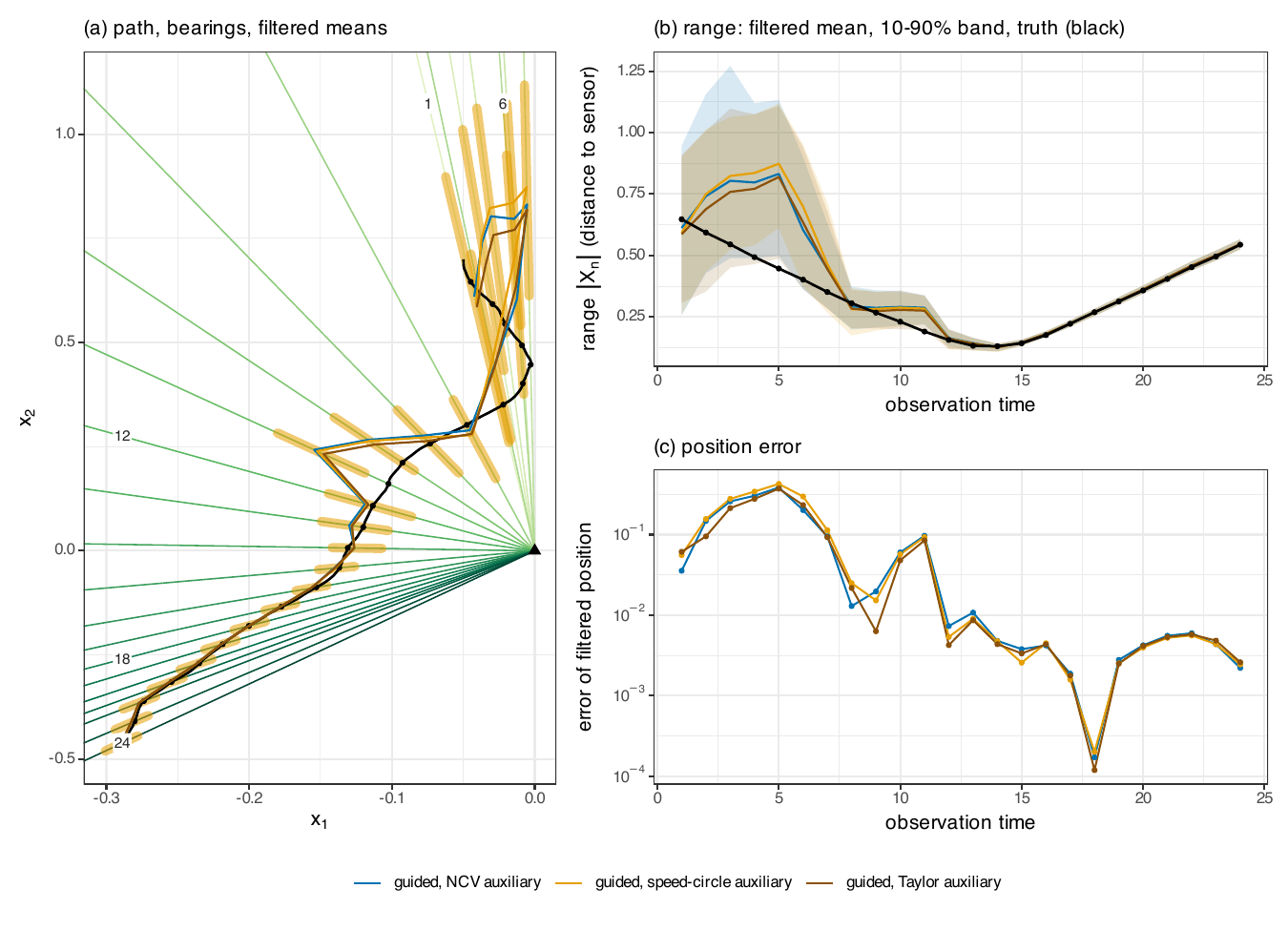}
\caption{Nearly-constant-speed target with exact bearings ($\ka=400$, $\eta=0.01$, $N=5000$). (a) True path (black, dots at observation times), observed bearings and filtered means; the shaded segments along the bearings are the intervals between the $10\%$ and $90\%$ quantiles of the filtering distribution of the range for the speed-circle auxiliary (the other filters give practically the same intervals); the bearings are shaded from light green (first observation) to dark green (last observation), and some are labelled with their observation number.  (b) Filtered mean of the range $\|X_{t_n}\|$ (the distance to the sensor) with the band between its $10\%$ and $90\%$ quantiles, and the true range (black). (c) Distance between filtered mean and true position.}
\label{fig:ncs-scenario}
\end{figure}

\begin{figure}[t]
\centering
\includegraphics[width=\textwidth]{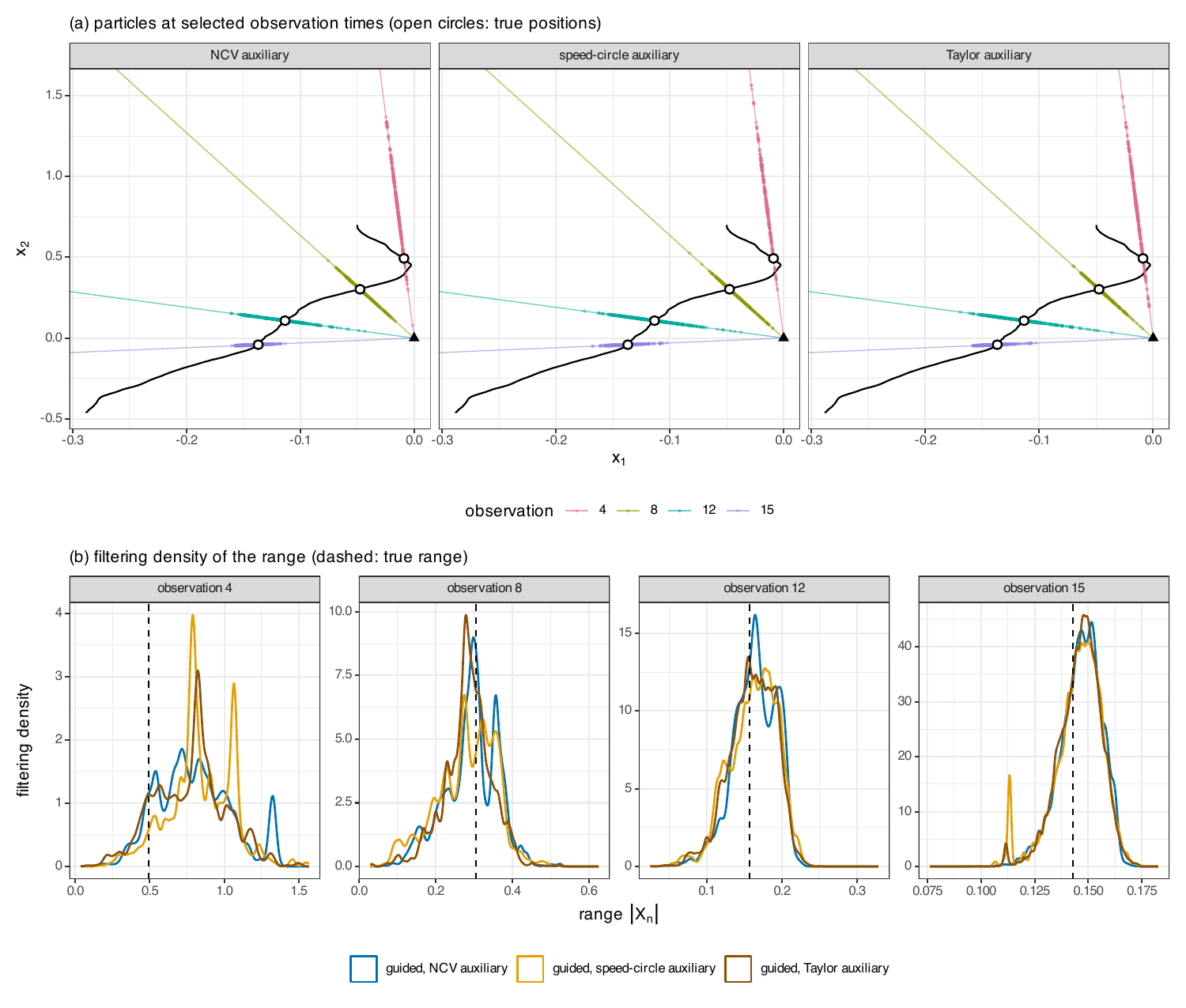}
\caption{Particles of single runs of the three guided filters with $N=5000$ particles ($\ka=400$, $\eta=0.01$, same data as in Figure \ref{fig:ncs-scenario}). (a) Particle positions at observations $4$, $8$, $12$ and $15$ (an equally weighted subsample of $400$ particles), the corresponding bearings, the true path and the true positions at these times (open circles). (b) Weighted density of the range $\|X_n\|$ of the particles, with the true range (dashed).}
\label{fig:ncs-clouds}
\end{figure}

\begin{figure}[t]
\centering
\includegraphics[width=\textwidth]{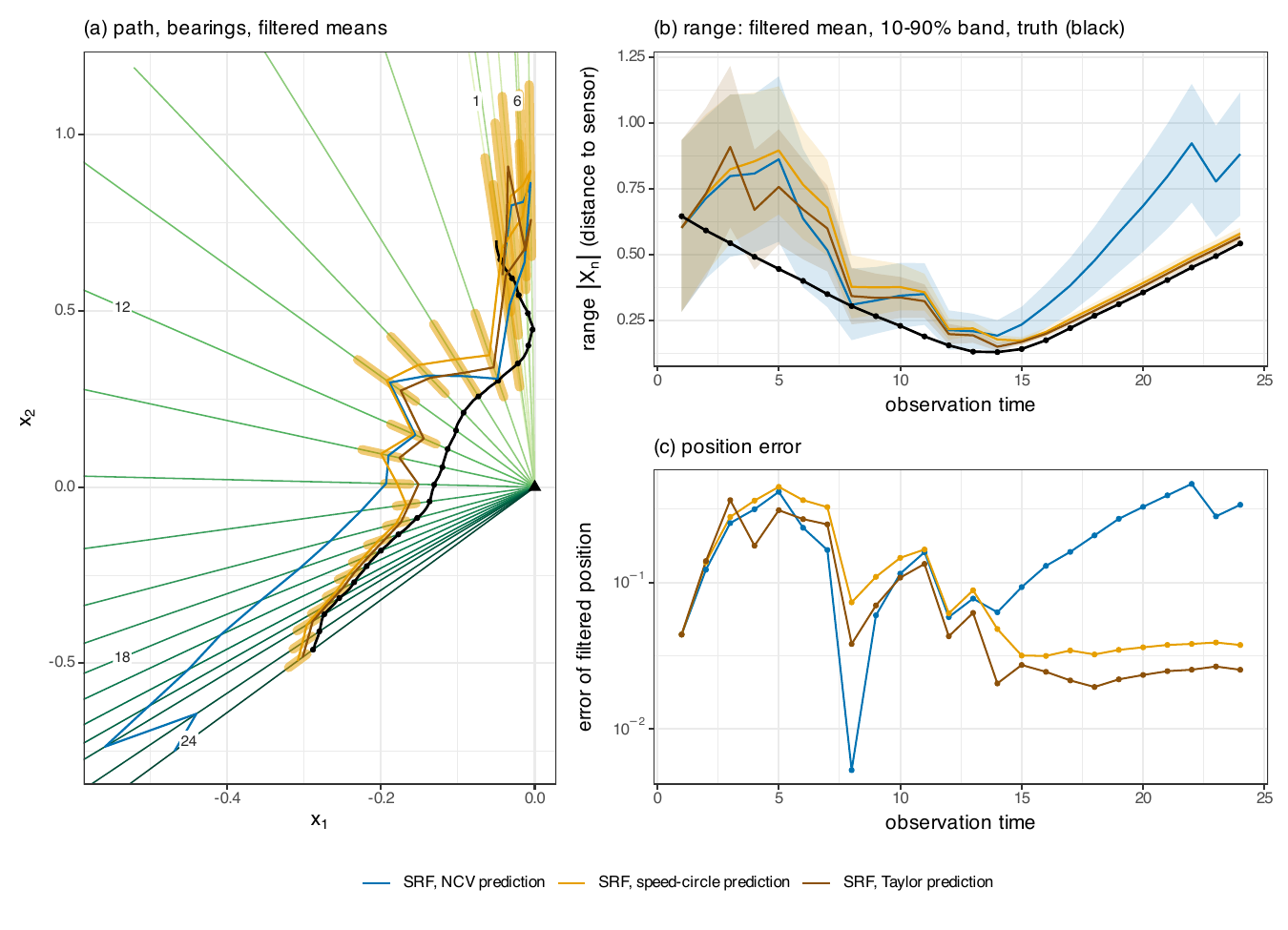}
\caption{As Figure~\ref{fig:ncs-scenario}, for the shifted Rayleigh filter with three linear models for the prediction: the NCV model, and the speed-circle and Taylor linearisations of $b$ at the filtered mean velocity. In (a), the shaded segments are the $10$--$90\%$ intervals of the range for the speed-circle prediction. Note that subfigure (a) has extended axes compared to Figure~\ref{fig:ncs-scenario}.}
\label{fig:ncs-srf}
\end{figure}

\begin{figure}[t]
\centering
\includegraphics[width=0.9\textwidth]{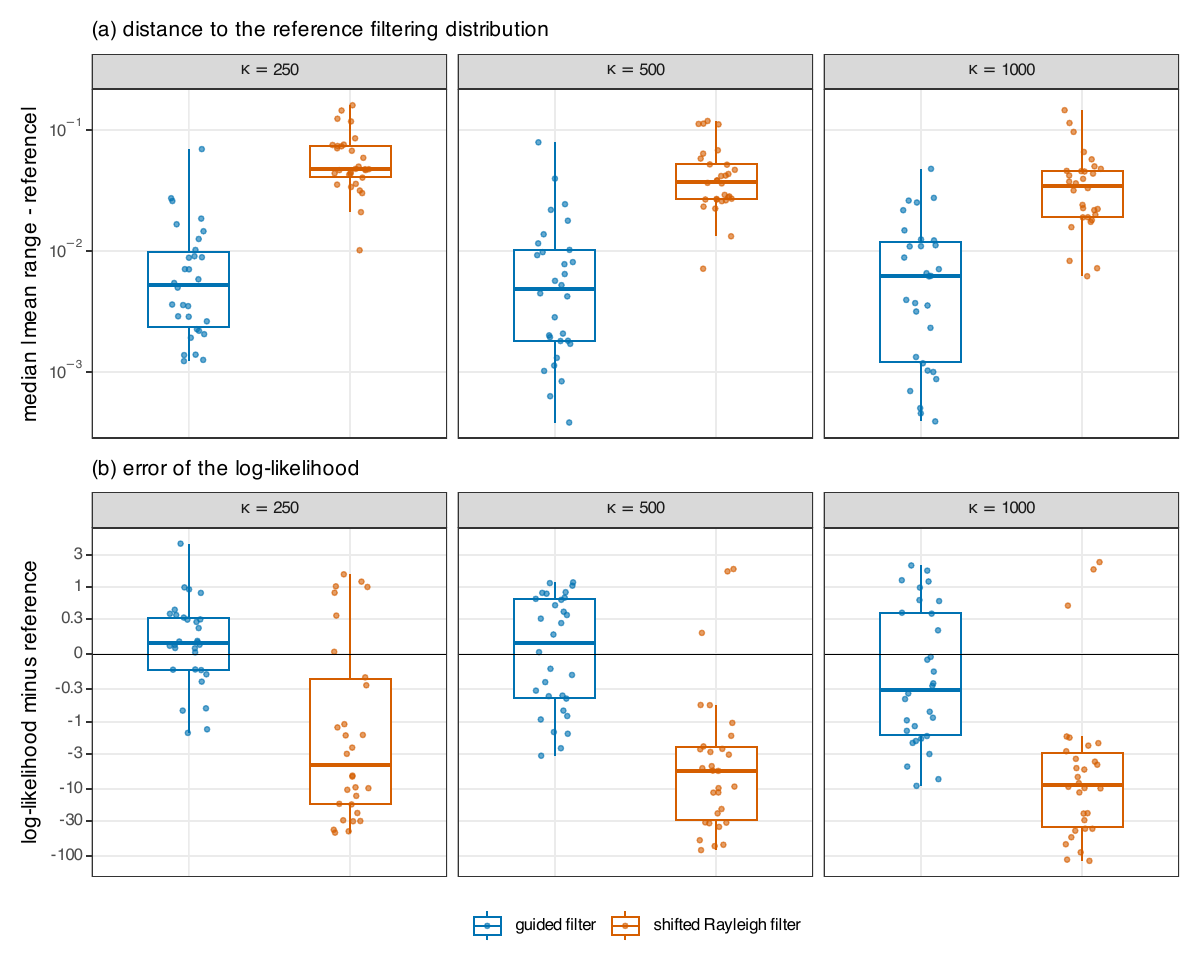}
\caption{Simulation study comparing the guided filter ($N=5000$) and the shifted Rayleigh filter, both with the speed-circle approximation, for $\eta=0.01$ and $\ka\in\{250,500,1000\}$; $30$ data sets per $\ka$ with the initial state drawn from the prior, one dot per data set. (a) Median over the $24$ observations of the distance between the filtered mean range and that of the reference (guided filter with $N=10^5$ particles). (b) Log-likelihood estimate minus that of the reference (pseudo-logarithmic axis).}
\label{fig:ncs-study}
\end{figure}

\begin{figure}[t]
\centering
\includegraphics[width=0.85\textwidth]{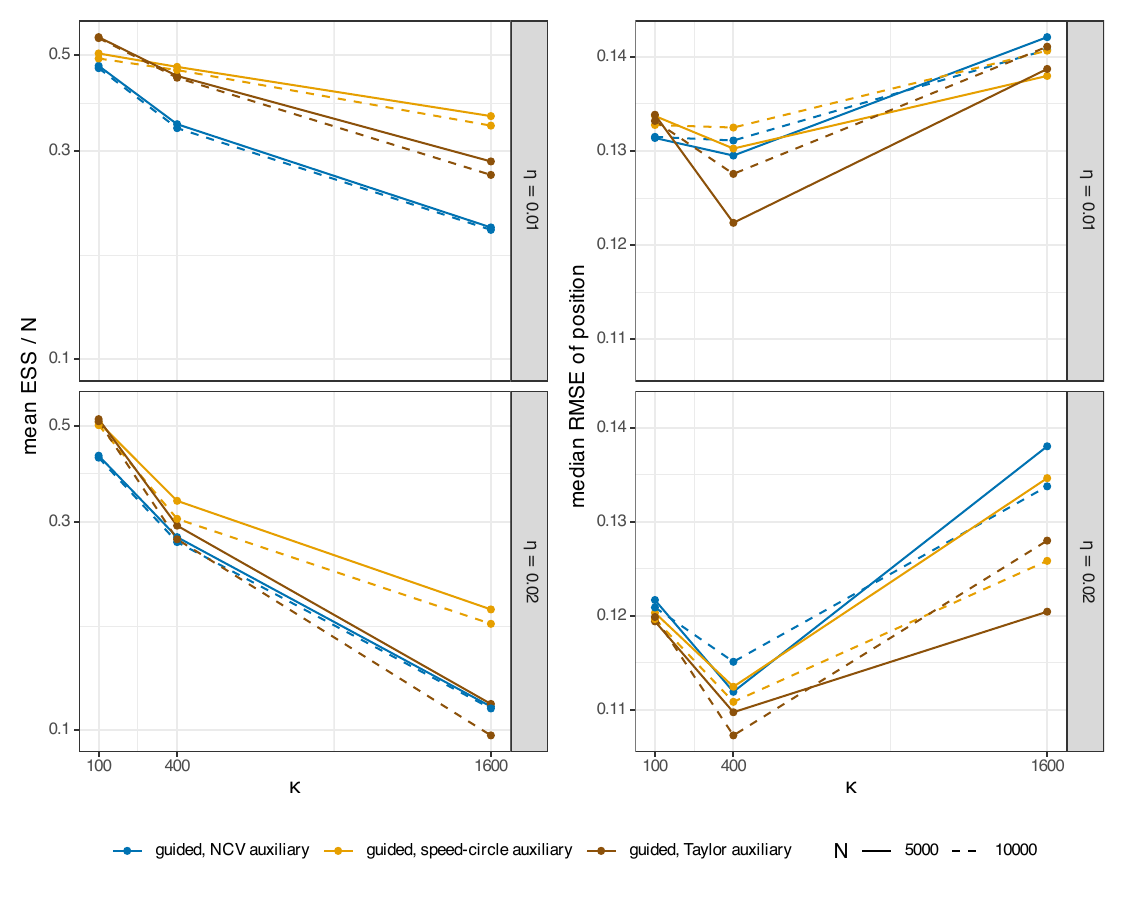}
\caption{Nearly-constant-speed target with exact bearings, $20$ runs for each $\ka\in\{100,400,1600\}$ and $\eta=0.01$ (top) or $\eta=0.02$ (bottom), with $N=5000$ (solid) and $N=10\,000$ (dashed) particles, for the three auxiliary models. Left: time-averaged $\mathrm{ESS}/N$. Right: median RMSE of the filtered position.}
\label{fig:ncs-perf}
\end{figure}

\begin{figure}[t]
\centering
\includegraphics[width=\textwidth]{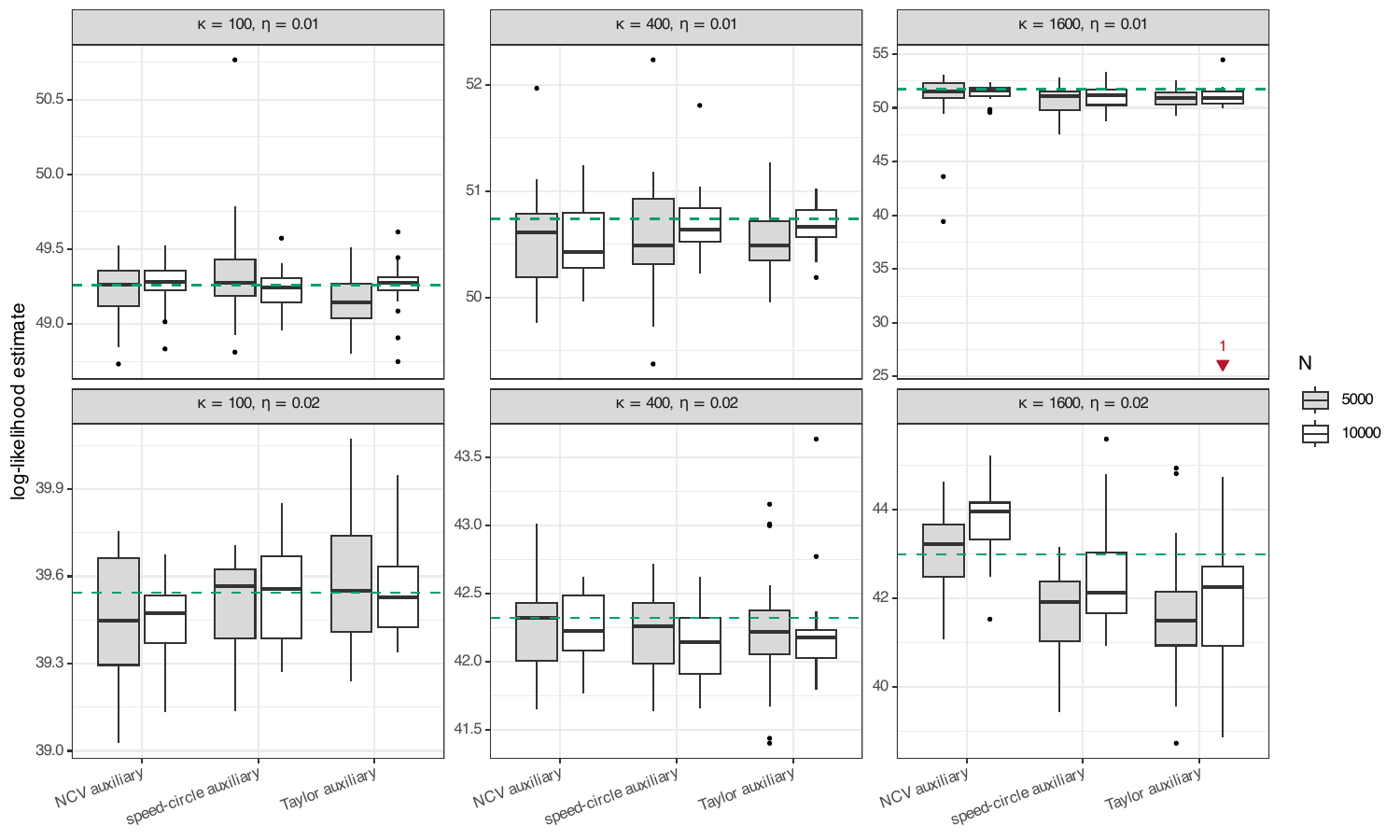}
\caption{Log-likelihood estimates of the guided filters for the three auxiliary models, $20$ runs for each $(\ka,\eta)$, with $N=5000$ (grey) and $N=10\,000$ (white) particles. In each panel the vertical axis is cut $25$ below the median of all estimates in the panel; estimates below the cut (collapsed runs) are drawn as triangles at the lower edge, with their number. The dashed line is a reference value: the median of $5$ runs with the speed-circle auxiliary and $N=10^6$ particles.}
\label{fig:ncs-loglik}
\end{figure}

\appendix

\section{Some results on $M_n$}

\begin{lem}[Recursion]\label{lem:recursion}
\[
  M_0(\mu,\sigma)=\Phi(\mu/\sigma), \qquad
  M_1(\mu,\sigma)=\mu\,\Phi(\mu/\sigma)+\sigma\,\varphi(\mu/\sigma),
\]
\[
  M_n(\mu,\sigma) = \mu\,M_{n-1}(\mu,\sigma) + (n-1)\sigma^2 M_{n-2}(\mu,\sigma), \qquad n\ge 2 .
\]
\end{lem}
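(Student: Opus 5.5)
We prove Lemma \ref{lem:recursion} by direct computation, using the Gaussian identity
\[
\frac{\partial}{\partial z}\phi(z;\mu,\sigma^2)=-\frac{z-\mu}{\sigma^2}\,\phi(z;\mu,\sigma^2),
\qquad\text{that is,}\qquad
(z-\mu)\phi(z;\mu,\sigma^2)=-\sigma^2\,\frac{\partial}{\partial z}\phi(z;\mu,\sigma^2),
\]
followed by integration by parts on $(0,\infty)$.

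\emph{Base cases.} For $M_0$, substitute $w=(z-\mu)/\sigma$. This gives $M_0(\mu,\sigma)=\int_{-\mu/\sigma}^\infty\phi(w)\dd w=1-\Phi(-\mu/\sigma)=\Phi(\mu/\sigma)$.

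For $M_1$, write $z=\mu+(z-\mu)$. Then
\[
M_1=\mu M_0+\int_0^\infty(z-\mu)\phi(z;\mu,\sigma^2)\dd z .
\]
By the identity above, the last integral equals
\[
-\sigma^2\big[\phi(z;\mu,\sigma^2)\big]_0^\infty=\sigma^2\phi(0;\mu,\sigma^2)=\sigma\,\varphi(\mu/\sigma).
\]
Here we used $\phi(0;\mu,\sigma^2)=\sigma^{-1}\varphi(\mu/\sigma)$ and the symmetry of $\varphi$. This gives the stated expression for $M_1$.

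\emph{Recursion for $n\ge2$.} Split $z^n=\mu z^{n-1}+z^{n-1}(z-\mu)$, so that
\[
M_n=\mu M_{n-1}-\sigma^2\int_0^\infty z^{n-1}\frac{\partial}{\partial z}\phi(z;\mu,\sigma^2)\dd z .
\]
Integrate by parts. The boundary term $\big[z^{n-1}\phi(z;\mu,\sigma^2)\big]_0^\infty$ vanishes: at infinity by Gaussian decay, and at $0$ because $n-1\ge1$. What remains is $\sigma^2(n-1)\int_0^\infty z^{n-2}\phi(z;\mu,\sigma^2)\dd z=(n-1)\sigma^2M_{n-2}$, which is the claim.

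The only point needing care is the boundary term at $z=0$. It vanishes precisely when $n\ge2$, whereas for $n=1$ it survives and produces the $\sigma\varphi(\mu/\sigma)$ term in $M_1$. This is why $M_0$ and $M_1$ are treated separately. All integrals converge absolutely by Gaussian tails, so the integration by parts is justified.
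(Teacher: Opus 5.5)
Your proposal is correct and follows the paper's own approach: the paper's one-line proof is integration by parts on $z\varphi(z;\mu,\sigma^2)=\mu\varphi(z;\mu,\sigma^2)-\sigma^2\varphi'(z;\mu,\sigma^2)$, which is exactly your splitting $z^n=\mu z^{n-1}+z^{n-1}(z-\mu)$. Your explicit treatment of $M_0$, of $M_1$, and of the boundary term at $z=0$ fills in details that the paper leaves to the reader.
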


\begin{proof}
Integration by parts on $z\varphi(z;\mu,\sigma^2)=\mu\varphi(z;\mu,\sigma^2)-\sigma^2\varphi'(z;\mu,\sigma^2)$.
\end{proof}

\begin{lem}[Score]\label{lem:score}
\[
  \frac{\partial M_n}{\partial \mu}(\mu,\sigma) = n\, M_{n-1}(\mu,\sigma), \qquad n\ge 1 .
\]
Consequently $\dfrac{\partial \log M_n}{\partial \mu} = n\,\dfrac{M_{n-1}}{M_n}$.
\end{lem}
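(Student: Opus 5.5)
The plan is to differentiate under the integral sign in the definition $M_n(\mu,\sigma)=\int_0^\infty z^n\phi(z;\mu,\sigma^2)\dd z$, and then integrate by parts. First, since $\phi(z;\mu,\sigma^2)=\phi((z-\mu)/\sigma)/\sigma$, we have
\[ \frac{\partial}{\partial\mu}\phi(z;\mu,\sigma^2)=\frac{z-\mu}{\sigma^2}\phi(z;\mu,\sigma^2)=-\frac{\partial}{\partial z}\phi(z;\mu,\sigma^2). \]
Interchanging derivative and integral is justified by dominated convergence. For $\mu$ in a compact interval, $|z^n(z-\mu)\phi(z;\mu,\sigma^2)|/\sigma^2$ is bounded by an integrable Gaussian-tailed envelope on $(0,\infty)$. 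This gives
\[ \frac{\partial M_n}{\partial\mu}=-\int_0^\infty z^n\,\frac{\partial}{\partial z}\phi(z;\mu,\sigma^2)\dd z. \]

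Next, integrate by parts:
\[ -\big[z^n\phi(z;\mu,\sigma^2)\big]_0^\infty+n\int_0^\infty z^{n-1}\phi(z;\mu,\sigma^2)\dd z. \]
The boundary term vanishes at infinity by Gaussian decay, and at $0$ because $z^n=0$ for $n\ge1$. The remaining integral is $nM_{n-1}(\mu,\sigma)$, which is the first claim.

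The second claim follows from the chain rule, $\partial_\mu\log M_n=(\partial_\mu M_n)/M_n$, once we know $M_n>0$. This holds because the integrand is strictly positive on $(0,\infty)$.

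The only point needing care is the boundary term at $0$. For $n=0$ it would contribute $\phi(0;\mu,\sigma^2)$, which is why the statement is restricted to $n\ge1$. As a consistency check, for $n=1$ the claim follows directly from \eqref{eq: M0 and M1}: differentiating $\mu\Phi(\mu/\sigma)+\sigma\phi(\mu/\sigma)$ gives $\Phi(\mu/\sigma)+(\mu/\sigma)\phi(\mu/\sigma)-(\mu/\sigma)\phi(\mu/\sigma)=M_0$. I expect no real obstacle beyond justifying the interchange of differentiation and integration.
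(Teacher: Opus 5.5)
Your proof is correct and is essentially the paper's argument: both differentiate under the integral sign using $\partial_\mu\varphi(z;\mu,\sigma^2)=(z-\mu)\sigma^{-2}\varphi(z;\mu,\sigma^2)$. The only difference is that the paper writes the result as $(M_{n+1}-\mu M_n)/\sigma^2$ and then invokes the recursion of Lemma~\ref{lem:recursion}, which is itself proved by the same integration by parts, whereas you integrate by parts directly; your justification of the interchange and of $M_n>0$ fills in details the paper leaves implicit.
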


\begin{proof}
Differentiate under the integral sign: $\partial_\mu\varphi(z;\mu,\sigma^2) =
(z-\mu)/\sigma^2\,\varphi(z;\mu,\sigma^2)$, so
$\partial_\mu M_n = (M_{n+1}-\mu M_n)/\sigma^2$; substitute the recursion
for $M_{n+1}$ to collapse this to $n M_{n-1}$.
\end{proof}

\section{Proofs}

\subsection{Proof of Lemma \ref{lem:kron}}
\begin{proof}[Proof of Lemma \ref{lem:kron}]
Throughout, write $\tau=\tau(t)$. By Assumption~\ref{ass:kron},
$B=\hat B\otimes I_p$, $C=e_1^\T\otimes I_p$, $q=e_k\otimes I_p$ and hence
$Q=qq^\T=(e_ke_k^\T)\otimes I_p$. We use the mixed-product identity
$(A\otimes B)(C\otimes D)=(AC)\otimes(BD)$ and
$e^{(\hat B\otimes I_p)s}=e^{\hat Bs}\otimes I_p$, the latter being immediate
from the power series since $(\hat B\otimes I_p)^n=\hat B^n\otimes I_p$.

\emph{(1)} We have
\[
  L(t) = Ce^{B\tau} = (e_1^\T\otimes I_p)(e^{\hat B\tau}\otimes I_p)
  = (e_1^\T e^{\hat B\tau})\otimes I_p = \hat c(\tau)^\T\otimes I_p .
\]

\emph{(2)} Since $L(t)^\T=\hat c(\tau)\otimes I_p$,
\[
  QL(t)^\T = \big(e_ke_k^\T\otimes I_p\big)\big(\hat c(\tau)\otimes I_p\big)
  = \big(e_k\,e_k^\T\hat c(\tau)\big)\otimes I_p
  = \hat c_k(\tau)\,(e_k\otimes I_p) = \hat c_k(\tau)\,q,
\]
as $e_k^\T\hat c(\tau)=\hat c_k(\tau)$ is a scalar.

\emph{(3)} Since $q^\T q=(e_k^\T e_k)\otimes I_p=I_p$, the matrix $Q=qq^\T$ is idempotent. Hence, by (2)
applied at time $s$,
\[
  L(s)QL(s)^\T = \big(QL(s)^\T\big)^\T\big(QL(s)^\T\big) = \hat c_k(\tau(s))^2\, q^\T q = \hat c_k(\tau(s))^2 I_p .
\]
By Lemma \ref{lem:condmeancov}, and substituting $r=T-s$,
\[ \Sigma(t)=\int_t^T \hat c_k(T-s)^2\dd s\; I_p=\int_0^{\tau(t)}\hat c_k(r)^2\dd r\; I_p=\sigma(t)^2I_p. \]
Moreover, $\hat c_k(s)=e_1^\T e^{\hat Bs}e_k=\sum_{j\ge0}e_1^\T\hat B^je_k\,s^j/j!$ is analytic, and by Assumption~\ref{ass:kron} at least one of its Taylor coefficients $e_1^\T\hat B^je_k$, $j\le k-1$, is nonzero. Hence $\hat c_k$ is not identically zero, so its zeros are isolated, and $\sigma(t)^2=\int_0^{\tau(t)}\hat c_k(r)^2\dd r>0$ for all $t<T$. For a chain of integrators, $e_i^\T\hat B=e_{i+1}^\T$ for $i<k$, so that $e_1^\T\hat B^je_k=0$ for $j<k-1$ and $e_1^\T\hat B^{k-1}e_k=1$, whatever the last row $\hat B_{k,:}$; then $\hat c_k(s)=s^{k-1}/(k-1)!+O(s^k)$ as $s\downarrow0$.

\emph{(4)} By Assumption \ref{ass:simplifying}, which holds by (3), $\mu(t,z)=u^\T L(t)z$, and
\[
  u^\T L(t) = (1\otimes u^\T)\big(\hat c(\tau)^\T\otimes I_p\big)
  = \hat c(\tau)^\T\otimes u^\T
  = \big(\hat c_1(\tau)u^\T,\ \dots,\ \hat c_k(\tau)u^\T\big).
\]
Applying this row vector to $z=(z^{(1)},\dots,z^{(k)})$ block-wise gives
\[ \mu(t,z)=\sum_{j=1}^k\hat c_j(\tau)\,u^\T z^{(j)}
=\sum_{j=1}^k\hat c_j(\tau)\langle u,z^{(j)}\rangle. \] Since $u^\T z^{(j)}=u^\T (uu^\T z^{(j)})$, $\mu(t,z)$ depends on $z$ only via $(I_k\otimes uu^\T)z$.

\emph{(5)} By the mixed-product identity,
\[
  (I_k\otimes P)(\hat B\otimes I_p) = \hat B\otimes P
  = (\hat B\otimes I_p)(I_k\otimes P)
\]
and
\[ (I_k\otimes P)(e_k\otimes I_p) = e_k\otimes P = (e_k\otimes I_p)P. \]
\end{proof}

\subsection{Proofs on limiting behaviour}
\begin{proof}[Proof of Theorem \ref{thm:hits-ray}]
Write $x_t:=\langle u_\perp, X_t^{\star\perp}\rangle$, $v_t:=\langle u_\perp, V_t^{\star\perp}\rangle$, $\tau=\tau(t)=T-t$,
and let $w_t$ denote the standard $1$-dimensional Brownian motion
$\langle u_\perp, W^{\star\perp}_t\rangle=\langle u_\perp, W_t^\star\rangle$, so that $X^{\star\perp}_t=x_tu_\perp$ and $\|X^{\star\perp}_t- \tfrac12\tau(W^{\star\perp}_t+C_0)\|=|x_t-\tfrac12\tau(w_t+c_0)|$ with $c_0:=\langle u_\perp,C_0\rangle$. By Corollary \ref{cor:ang radial processes},
\[
  \mathrm dx_t = v_t\,\mathrm dt, \qquad
  \mathrm dv_t = \mathrm dw_t - \frac{3}{\tau^2}x_t\,\mathrm dt - \frac3\tau v_t\,\mathrm dt.
\]

\emph{Step 1: decoupling.} Define $z_t=x_t/\tau(t)$. We have
\[ \dd z_t = \frac1{\tau^2} x_t \dd  t + \frac1{\tau} \dd x_t = \frac{s_t}{\tau} \,\dd t, \]
where $s_t := z_t+v_t$. Then
\[ \dd s_t = \dd z_t + \dd v_t= \frac{s_t}{\tau}\,\dd t+ \dd w_t -\frac{3}{\tau}s_t\, \dd t = -\frac{2}{\tau} s_t\, \dd t + \dd w_t, \]
so $s_t$ is a generalised ($\alpha=2$) Wiener bridge. Consequently
\[ \dd  \left(\frac{s_t}{\tau^2}\right) = \frac{1}{\tau^2} \,\dd s_t + \frac{2}{\tau^3} s_t\, \dd t = \frac{1}{\tau^2}\Big[-\frac2\tau s_t\,\dd t+\dd w_t\Big] + \frac2{\tau^3}s_t\,\dd t = \frac1{\tau^2} \,\dd w_t. \]

\emph{Step 2: rate for $s_t$ via the law of the iterated logarithm.}
Define the continuous martingale $M_t=\int_0^t \tau(s)^{-2} \,\dd w_s$, so that $s_t=\tau(t)^2\big[s_0\tau(0)^{-2}+M_t\big]$. Its quadratic variation is
\[ [M]_t = \int_0^t (T-s)^{-4} \,\dd s = \frac13\big(\tau(t)^{-3}-\tau(0)^{-3}\big) \sim \frac{\tau(t)^{-3}}{3} \quad\text{as } \tau(t)\to0. \]
By the Dubins--Schwarz theorem, $M_t=B_{[M]_t}$ for a standard Brownian motion $B$, and by Khinchin's law of the iterated logarithm,
\[ \limsup_{u\to\infty}\frac{|B_u|}{\sqrt{2u\log\log u}} = 1 \quad\text{a.s.} \]
Since $[M]_t\to\infty$ monotonically as $t\uparrow T$, this gives
\[ \limsup_{t\uparrow T}\frac{|M_t|}{\sqrt{2[M]_t\log\log[M]_t}} = 1 \quad\text{a.s.} \]
Now $\log[M]_t = 3\log(1/\tau(t)) + O(1)$ as $\tau(t)\to0$, so $\log\log[M]_t = \log\big(3\log(1/\tau(t))+O(1)\big) = \log\log(1/\tau(t)) + O(1)$, whence $\log\log[M]_t/\log\log(1/\tau(t))\to1$. Combined with $[M]_t\sim \tau(t)^{-3}/3$,
\[ \sqrt{2[M]_t\log\log[M]_t} \sim \sqrt{\tfrac23}\,\tau(t)^{-3/2}\sqrt{\log\log(1/\tau(t))} \quad\text{as }\tau(t)\to0, \]
and therefore
\[ \limsup_{t\uparrow T} \frac{\tau(t)^{3/2}|M_t|}{ \sqrt{\log\log(1/\tau(t))}} = \sqrt{\frac23} \quad\text{a.s.} \]
Since $\tau(t)^2 s_0\tau(0)^{-2}=O(\tau(t)^2)$ is of strictly smaller order than $\xi(t):=\sqrt{\tau(t)\log\log(1/\tau(t))}$, this yields
\begin{equation}\label{eq:limsup s_t}
	\limsup_{t\uparrow T} \frac{|s_t|}{\xi(t)} = \limsup_{t\uparrow T}\frac{\tau(t)^{3/2}|M_t|}{\sqrt{\log\log(1/\tau(t))}} = \sqrt{\frac23} \quad\text{a.s.}
\end{equation}

\emph{Step 3: the driftless coordinate $p_t$.} Define $p_t=v_t+3z_t$. Then
\[ \dd p_t = \dd v_t+3\,\dd z_t = \dd w_t -\frac{3}{\tau} s_t\, \dd t + \frac{3s_t}{\tau}\, \dd t = \dd w_t, \]
so $p_t=p_0+w_t$ with $p_0=v_0+3z_0=c_0$. From $p_t-s_t=2z_t=2x_t/\tau$,
\[ x_t - \frac\tau2(w_t+c_0) = -\frac\tau2 s_t. \]

\emph{Step 4: conclusion.} Take absolute values on both sides of the preceding display and use \eqref{eq:limsup s_t} to get
\[ \limsup_{t\uparrow T} \frac{\big|x_t-\tfrac\tau2(w_t+c_0)\big|}{\tau(t)\,\xi(t)} = \frac12\limsup_{t\uparrow T}\frac{|s_t|}{\xi(t)} = \frac12\sqrt{\frac23} = \frac{1}{\sqrt6} \quad\text{a.s.} \]
Since $\tau(t)\xi(t)\to0$ and $\tfrac\tau2(w_t+c_0)\to0$ (as $w_t+c_0\to w_T+c_0$, finite a.s.\ by continuity of $w$), it follows that $x_t\to0$ a.s.\ as $t\uparrow T$; by continuity of the sample paths of $Z_t^\star$ on $[0,T]$, $u_\perp^\top X_T^\star=0$, i.e.\ $X_T^\star\in\mathrm{span}(u)$.
\end{proof}

\subsection{Proofs for the angular and radial process}

\begin{proof}[Proof of Theorem \ref{thm:closed-system}, angular part]
We apply $I_k\otimes\Pi$ to the SDE for $Z^\star$ in Corollary \ref{cor:guided-block}, written as
\[ \dd Z_t^\star=BZ_t^\star\,\dd t+\hat c_k(\tau)\,q\,\alpha(t,Z_t^\star)\,\dd t+q\,\dd W_t^\star. \]
By
Lemma~\ref{lem:kron}(5), $(I_k\otimes\Pi)BZ_t^\star=B(I_k\otimes\Pi)Z_t^\star=BA^{\star}_t$. Again by Lemma~\ref{lem:kron}(5),
\[
  (I_k\otimes\Pi)\,\hat c_k(\tau)\,q\,\alpha(t,Z_t^\star)= \hat c_k(\tau)\,q\,\Pi\alpha(t,Z_t^\star)
  = -\frac{\hat c_k(\tau)}{\sigma(t)^2}\,q\,\Pi m(t,Z_t^\star),
\]
where the final equality follows from $\Pi u=0$ and $\Pi^2=\Pi$. Since
\[\Pi m(t,Z_t^\star)=\Pi L(t)Z_t^\star=(\hat c(\tau)^\top\otimes\Pi)Z_t^\star=(\hat c(\tau)^\top\otimes I_p)(I_k\otimes\Pi)Z_t^\star=\hat c(\tau)^\top A^{\star}_t\]
(using Lemma~\ref{lem:kron}(1)), this gives the stated singular term. Finally, by
Lemma~\ref{lem:kron}(5),
$(I_k\otimes\Pi)q\,\dd W_t^\star = q\,\Pi\,\dd W_t^\star=q\,\dd W^{\star\ang}_t$.
\end{proof}

\begin{proof}[Proof of Theorem \ref{thm:closed-system}, radial part]
Identical to the proof of the angular part, with $\Pi$
replaced throughout by $\Pi^{\rad}=uu^\top$.  The only step that differs is
\[
  \Pi^{\rad}\alpha(t,z)=\Pi^{\rad}\left[(p-1)\frac{M_{p-2}(\mu,\sigma)}{M_{p-1}(\mu,\sigma)}\,u \;-\; \frac{\|\Pi m\|}{\sigma^2}\,e_\perp\right]
  = (p-1)\frac{M_{p-2}(\mu,\sigma)}{M_{p-1}(\mu,\sigma)}\,u,
\]
since $\Pi^\rad u=u$ and $\Pi^\rad e_\perp=0$.
By Lemma \ref{lem:kron}(4), $\mu(t,Z_t^\star)=\mu(t,R^{\star}_t)$ is recoverable from $R^{\star}_t$ alone. The noise term becomes
$(I_k\otimes\Pi^{\rad})q\,\mathrm dW_t^\star=q\,\Pi^\rad\mathrm dW_t^\star=q\,\mathrm dW_t^{\star\rad}$, by Lemma~\ref{lem:kron}(5).
Independence of $W_t^{\star\rad}$ and $W^{\star\ang}_t$ follows since
they are jointly Gaussian with $\mathrm{Cov}(W^{\star\rad}_t,W^{\star\ang}_t)=\Pi^\rad\Pi^\T\, t=uu^\T(I_p-uu^\T)\,t=0$; they are projections of the same standard Brownian motion $W_t^\star$
onto the orthogonal subspaces $\mathrm{span}(u)$ and $u^\perp$.
\end{proof}

\subsection{Proof of Lemma \ref{lem:conditional-law}}
Let $t<T$, $A\in\scr F_t$ and $B\subseteq S^{p-1}$ Borel. By the Markov property,
$\mathbb{L}(U\in B\mid\scr F_t)=\int_Bg_u(t,Z_t)\,\varsigma(\dd u)$, and therefore, as $A\in\scr F_t$,
\begin{align*}
  \int_B\mathbb{L}_u(A)\,g_u(0,z_0)\,\varsigma(\dd u)&=\mathbb{L}(A\cap\{U\in B\})\\ &	
  = \mathbb{L}\big[\mathbb{L}(A\cap\{U\in B\}\mid \scr{F}_t)\big]
  = \mathbb{L}\big[\ind_A\,\mathbb{L}(U\in B\mid \scr{F}_t)\big]\\
  &= \mathbb{L}\Big[\ind_A\int_B g_u(t,Z_t)\,\varsigma(\dd u)\Big]
  =\int_B\mathbb{L}\big[\ind_A\,g_u(t,Z_t)\big]\,\varsigma(\dd u).
\end{align*}
Since $\scr F_t$ is countably generated, there is a $\varsigma$-null set $N$ such that for all $u\notin N$, all
rational $t<T$ and all $A\in\scr F_t$,
\[ \mathbb{L}_u(A)\,g_u(0,z_0)=\mathbb{L}\big[\ind_A\,g_u(t,Z_t)\big], \]
that is,
\begin{equation}\label{eq:PPlin_u PPlin}
  \frac{\dd\mathbb{L}_u}{\dd\mathbb{L}}\bigg|_{\scr F_t}(Z)=\frac{g_u(t,Z_t)}{g_u(0,z_0)} .
\end{equation}
This extends to all $t<T$, because $g_u(t,Z_t)$ is a $\mathbb{L}$-martingale on $[0,T)$: for $t<s<T$ with $s$
rational, the density on $\scr F_t$ is $\mathbb{L}[g_u(s,Z_s)\mid\scr F_t]/g_u(0,z_0)=g_u(t,Z_t)/g_u(0,z_0)$.
The martingale property holds for every $u$: writing $\ell_t(x\mid z)$ for the Gaussian density of $X_T$ given
$Z_t=z$, we have $g_u(t,z)=\int_0^\infty R^{p-1}\ell_t(Ru\mid z)\,\dd R$, and for every $x$ the process
$\ell_t(x\mid Z_t)$ is a $\mathbb{L}$-martingale by the Markov property, so the claim follows by Tonelli's theorem.

Under $\mathbb{L}$, $W_t=q^+\big(Z_t-z_0-\int_0^tBZ_s\,\dd s\big)$ is an $(\scr F_t)$-Brownian motion. Let $u$ be such that \eqref{eq:PPlin_u PPlin} holds and let
$L_t=g_u(t,Z_t)/g_u(0,z_0)$, $t<T$. Since $g_u$ is smooth on $[0,T)\times\RR^d$ by Theorem~\ref{thm:general case g and nabla log g}, $L$ is a continuous semimartingale
by It\^o's formula, and since it is a $\mathbb{L}$-martingale its finite-variation part vanishes. Hence
$\dd L_t=L_t\,\theta_t^\T\dd W_t$ with $\theta_t=q^\T\nabla_z\log g_u(t,Z_t)$, that is, $L_t=\mathcal{E}\left(\int_0^t \theta_s^\T \dd W_s\right)$, with $\mathcal{E}$ denoting the stochastic exponential. 
By Girsanov's theorem, applied on $[0,t]$ for each $t<T$, $W_t-\int_0^t\theta_s\,\dd s$ is an $(\scr F_t)$-Brownian motion on
$[0,T)$ under $\mathbb{L}_u$. Since $Q=qq^\T$ and $q^+q=I$, this process equals $W^\star_t$, and
\[
  Z_t=z_0+\int_0^t\big[BZ_s+Q\nabla_z\log g_u(s,Z_s)\big]\dd s+q\,W^\star_t ,
\]
so $Z$ solves \eqref{eq:Zcirc} with driving process $W^\star$.

\subsection{Local Lipschitz property of the guiding term}

The following regularity of the guiding term is used in the proof of Theorem \ref{lem:strong-nonlinear}.

\begin{lem}\label{lem:g-lipschitz}
Let $u\in S^{p-1}$ and $t<T$. For every $r>0$ there is a constant $K$ such that
\[
  \big\|\nabla_z\log g_u(s,z)-\nabla_z\log g_u(s,z')\big\|\le K\|z-z'\|
  \qquad\text{for all } s\in[0,t] \text{ and } \|z\|,\|z'\|\le r .
\]
That is, $z\mapsto\nabla_z\log g_u(s,z)$ is locally Lipschitz, uniformly in $s\in[0,t]$.
\end{lem}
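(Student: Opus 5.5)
The plan is to show that $(s,z)\mapsto\nabla_z\log g_u(s,z)$ is $C^1$ in $z$ with a second derivative $D_z^2\log g_u(s,z)$ that is jointly continuous on $[0,t]\times\RR^d$. Then on the compact set $[0,t]\times\{\|z\|\le r\}$ this Hessian is bounded in operator norm by some $K$. The mean value inequality along the segment from $z'$ to $z$, which stays in the convex ball, gives the claimed Lipschitz bound uniformly in $s\in[0,t]$.

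To get the Hessian, I would start from the closed form in Theorem~\ref{thm:general case g and nabla log g}:
\[
\nabla_z\log g_u(s,z)=L(s)^\T\Sigma(s)^{-1}\Big[(p-1)\xi_s^2\,\frac{M_{p-2}(\mu(s,z),\xi_s)}{M_{p-1}(\mu(s,z),\xi_s)}\,u-\Pi_s L(s)z\Big].
\]
The second term is linear in $z$ with coefficient matrix $L(s)^\T\Sigma(s)^{-1}\Pi_sL(s)$. Since $s\le t<T$, $\Sigma(s)$ is positive definite, and $s\mapsto\Sigma(s)$ is continuous by Lemma~\ref{lem:condmeancov}. Hence $\Sigma(s)^{-1}$, $\|u\|_s$, $\xi_s$ and $\Pi_s$ are continuous on $[0,t]$, and so is $L(s)=Ce^{B(T-s)}$. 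This coefficient is therefore bounded on $[0,t]$, which gives Lipschitz continuity of the linear term directly.

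For the first term, note that $\mu(s,z)=\xi_s^2u^\T\Sigma(s)^{-1}L(s)z$ is linear in $z$ with a coefficient continuous in $s$. It therefore suffices to show that $F(\mu,\xi):=M_{p-2}(\mu,\xi)/M_{p-1}(\mu,\xi)$ is $C^1$ in $\mu$, with $\partial_\mu F$ jointly continuous in $(\mu,\xi)$ on $\RR\times(0,\infty)$. Indeed, $\xi_s$ ranges over a compact subset of $(0,\infty)$ for $s\in[0,t]$, and $\mu(s,z)$ ranges over a bounded set when $\|z\|\le r$. By Lemma~\ref{lem:score},
\[
\partial_\mu F=(p-2)\frac{M_{p-3}}{M_{p-1}}-(p-1)\frac{M_{p-2}^2}{M_{p-1}^2}
\]
for $p\ge3$, and for $p=2$ the first term is absent. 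Each $M_n$ is continuous in $(\mu,\xi)$: this follows from the explicit formulas \eqref{eq: M0 and M1} and the recursion in Lemma~\ref{lem:recursion}, or from dominated convergence.

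The main point to check is that the denominator $M_{p-1}(\mu,\xi)$ does not vanish. It is strictly positive for every $\mu\in\RR$ and $\xi>0$, being the integral of a strictly positive integrand over $(0,\infty)$. By joint continuity it is therefore bounded away from zero on the relevant compact set of $(\mu,\xi)$. Hence $\partial_\mu F$ is bounded there, and the chain rule yields a bounded Hessian of the first term on $[0,t]\times\{\|z\|\le r\}$. Combining this with the bound for the linear term completes the argument.
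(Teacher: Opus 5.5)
Your argument is essentially the paper's. You split off the linear term $-L(s)^\T\Sigma(s)^{-1}\Pi_sL(s)z$, and reduce the remaining term to $C^1$-regularity in $\mu$ of $M_{p-2}/M_{p-1}$, using $M_{p-1}>0$ and continuity of all coefficients on $[0,t]$. You then bound the derivative on a compact set and apply the mean value theorem inside the convex ball. Bounding the Hessian on all of $[0,t]\times\{\|z\|\le r\}$ plays the role of the paper's Step 3.

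One claim needs correcting: for $p=2$ the first term of $\partial_\mu F$ is \emph{not} absent. Lemma~\ref{lem:score} only covers $n\ge1$, and $\partial_\mu M_0(\mu,\xi)=\xi^{-1}\phi(\mu/\xi)\neq0$. Hence for $p=2$ one has $\partial_\mu F=\phi(\mu/\xi)/(\xi M_1)-M_0^2/M_1^2$, not $-M_0^2/M_1^2$. Since $p=2$ is the main case in the paper, this should be fixed. The conclusion is unaffected, however: the extra term is continuous on $\RR\times(0,\infty)$ because $\xi>0$ and $M_1>0$, so it is bounded on your compact set. This is exactly how the paper handles it.
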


\begin{proof}[Proof of Lemma \ref{lem:g-lipschitz}]
Since $m(s,z)=L(s)z$, Theorem~\ref{thm:general case g and nabla log g} gives
\[
  \nabla_z\log g_u(s,z)=(p-1)\,\xi_s^2\,\rho\big(\mu(s,z),\xi_s\big)\,L(s)^\T\Sigma(s)^{-1}u
  -L(s)^\T\Sigma(s)^{-1}\Pi_sL(s)\,z,
\]
with
\[  \rho(\mu,\xi):=\frac{M_{p-2}(\mu,\xi)}{M_{p-1}(\mu,\xi)} \qquad \text{and}\qquad 
 \mu(s,z)=\xi_s^2\,u^\T\Sigma(s)^{-1}L(s)z.\] The matrices $L(s)=Ce^{B(T-s)}$ and
$\Sigma(s)=\int_s^TL(r)QL(r)^\T\dd r$ are continuous in $s$, and $\Sigma(s)$ is positive definite for $s\le t<T$.
Hence $\Sigma(s)^{-1}$, $\xi_s=(u^\T\Sigma(s)^{-1}u)^{-1/2}$, $\Pi_s$ and the coefficients of the linear maps
$z\mapsto\mu(s,z)$ and $z\mapsto L(s)^\T\Sigma(s)^{-1}\Pi_sL(s)z$ are continuous on the compact interval $[0,t]$.
In particular they are bounded there, and $\xi_s\ge\xi_{\min}>0$. The second term above is therefore Lipschitz in $z$,
with a constant that does not depend on $s\in[0,t]$.

For the first term, $M_n(\mu,\xi)=\int_{-\mu/\xi}^\infty(\mu+\xi y)^n\phi(y)\dd y$ is positive and continuous on
$\RR\times(0,\infty)$. Moreover $\partial_\mu M_n=nM_{n-1}$ for $n\ge1$ by Lemma~\ref{lem:score}, and
$\partial_\mu M_0(\mu,\xi)=\xi^{-1}\phi(\mu/\xi)$. Hence $\rho$ is continuously differentiable in $\mu$, and
$\partial_\mu\rho$ is continuous on $\RR\times(0,\infty)$. The set
$\{(\mu(s,z),\xi_s)\colon s\in[0,t],\ \|z\|\le r\}$ is compact, as a continuous image of $[0,t]\times\{\|z\|\le r\}$, and
it is contained in $\RR\times(0,\infty)$. Denote this set by $K$. As $\partial_\mu\rho$ is continuous on $\RR\times(0,\infty)\supseteq K$ and $K$ is compact,
$c:=\max_{K}|\partial_\mu\rho|<\infty$. In particular, the continuous and positive function $M_{p-1}$ attains a positive minimum on $K$, so the denominators of $\rho$ and $\partial_\mu\rho$ are bounded away from zero there.

Now fix $s\in[0,t]$ and $z,z'$ with $\|z\|,\|z'\|\le r$. We show that
$|\rho(\mu(s,z),\xi_s)-\rho(\mu(s,z'),\xi_s)|\le c\,c'\|z-z'\|$, with $c$ and $c'$ not depending on $s$, in three steps.

\emph{Step 1: $\mu(s,\cdot)$ is linear with bounded norm.} We have $\mu(s,z)=a(s)^\T z$ with
$a(s)=\xi_s^2L(s)^\T\Sigma(s)^{-1}u$. Hence $\mu(s,z)-\mu(s,z')=a(s)^\T(z-z')$, and by the Cauchy--Schwarz inequality
\[ |\mu(s,z)-\mu(s,z')|\le\|a(s)\|\,\|z-z'\|\le c'\,\|z-z'\|,\qquad c':=\sup_{s\in[0,t]}\|a(s)\|. \]
Here $c'<\infty$, since $a$ is continuous on the compact interval $[0,t]$.

\emph{Step 2: mean value theorem in $\mu$.} Since $\xi_s$ does not depend on $z$, only the first argument of $\rho$
varies. The function $\mu\mapsto\rho(\mu,\xi_s)$ is continuously differentiable on $\RR$, so by the mean value theorem
there is a $\tilde\mu$ between $\mu(s,z')$ and $\mu(s,z)$ such that
\[ \rho(\mu(s,z),\xi_s)-\rho(\mu(s,z'),\xi_s)=\partial_\mu\rho(\tilde\mu,\xi_s)\,\big(\mu(s,z)-\mu(s,z')\big). \]

\emph{Step 3: the intermediate point lies in $K$.} The bound $|\partial_\mu\rho|\le c$ is only available on $K$, so
we need $(\tilde\mu,\xi_s)\in K$. Let $z_\lambda=z'+\lambda(z-z')$ for $\lambda\in[0,1]$. Since the ball
$\{\|z\|\le r\}$ is convex, $\|z_\lambda\|\le r$ for all $\lambda\in[0,1]$. By linearity of $\mu(s,\cdot)$,
\[ \mu(s,z_\lambda)=\mu(s,z')+\lambda\big(\mu(s,z)-\mu(s,z')\big),\qquad \lambda\in[0,1], \]
which takes every value between $\mu(s,z')$ and $\mu(s,z)$. Hence $\tilde\mu=\mu(s,z_{\tilde\lambda})$ for some
$\tilde\lambda\in[0,1]$, so that $(\tilde\mu,\xi_s)\in K$ and $|\partial_\mu\rho(\tilde\mu,\xi_s)|\le c$.

Combining the three steps,
\[ |\rho(\mu(s,z),\xi_s)-\rho(\mu(s,z'),\xi_s)|\le c\,|\mu(s,z)-\mu(s,z')|\le c\,c'\,\|z-z'\|, \]
for all $s\in[0,t]$ and $\|z\|,\|z'\|\le r$. Multiplying by the bounded coefficient $(p-1)\xi_s^2L(s)^\T\Sigma(s)^{-1}u$ shows that the
first term is Lipschitz on the ball as well, with a constant that does not depend on $s\in[0,t]$.
\end{proof}

\subsection{Proof of Corollary \ref{cor:linear-growth}}

We verify that boundedness of $f$ can be replaced by linear growth in the proofs of Theorem \ref{lem:strong-nonlinear} and Theorem \ref{prop:ac-nonlinear}. Boundedness is used in three places.

\emph{(i)} It is easy to see that \eqref{eq:for strong solution} applies under a linear growth condition as well, establishing part (1) of Theorem \ref{lem:strong-nonlinear}.

\emph{(ii)} In part (2) of that theorem, boundedness is used to verify that $(D_t)_{t<T}$ is a mean one martingale for which the limit $D_T$ exists, with $\mathbb{L}_uD_T=1$ and $D_T>0$. As $W^\star$ is an $L^2$-bounded martingale on $[0,T)$, it extends to a Brownian motion on $[0,T]$. By Example~3 in Section~6.2 of \cite{LiptserShiryaev2001}, $\mathbb{L}_uD_T=1$ holds as soon as $\sup_{t\le T}\mathbb{L}_u\exp\big(\delta\|f(t,Z_t)\|^2\big)<\infty$ for some $\delta>0$, and by linear growth of $f$ it suffices that $\sup_{t\le T}\mathbb{L}_u\exp\big(\delta\|Z_t\|^2\big)<\infty$. 
Under $\mathbb{L}$ the vector $(Z_t,X_T)$ is Gaussian. Hence, writing $X_T=Ru$, under $\mathbb{L}_u$ the law of $Z_t$ is a mixture over $R$ of Gaussian laws $N(\alpha_t+\Gamma_tRu,\Lambda_t)$, with $\alpha_t$, $\Gamma_t$ and $\Lambda_t$ bounded on $[0,T]$, and the mixing density of $R$, proportional to $R^{p-1}\phi\big(Ru;L(0)z_0,\Sigma(0)\big)$ on $(0,\infty)$, has Gaussian tails. Therefore $\sup_{t\le T}\mathbb{L}_u\exp\big(\delta\|Z_t\|^2\big)<\infty$ for $\delta>0$ small enough. Finally, $D_T>0$, as $\int_0^T\|f(s,Z_s)\|^2\dd s<\infty$ by continuity of the paths of $Z$ on $[0,T]$.

\emph{(iii)} The final part where boundedness of $f$ is used is in Step~1 of the proof of Theorem \ref{prop:ac-nonlinear}, together with the existence of $h_u$. If $f$ has linear growth, it is bounded on compact sets, and \eqref{eq:Z} has a unique strong solution by (i). Therefore, Theorem~1 in \cite{DelyonHu2006} remains applicable. Alternatively, $\mathbb{L}\Psi_T=1$ follows from case~(b) of Example~3 in Section~6.2 of \cite{LiptserShiryaev2001}, since under $\mathbb{L}$ the process $Z$ solves $\dd Z_t=BZ_t\dd t+q\dd W_t$, with a drift of linear growth and a constant diffusion coefficient. As $\Psi_T>0$, $\PP$ and $\mathbb{L}$ are equivalent, and the same argument, started at time $t$ in state $z$, gives the existence of $h_u(t,z)$ as before.

\section{Discretisation of the nearly constant speed model}\label{sec:discretisation_nsc_model}

Consider the planar model $\dd X_t=V_t\dd t$, $\dd V_t=b(V_t)\dd t+\dd W_t$ with
$b\colon\RR^2\to\RR^2$, which is \eqref{eq:Z} with $f=b$. Its guided process $Z^\circ=(X^\circ,V^\circ)$ is
defined by \eqref{eq:guided-nonlinear}, with the guiding function $g$ of the nearly-constant-velocity model. Recall the definition of the flow $\Phi_t(v)$ in Equation \eqref{eq:exactflow}.

\begin{lem}\label{lem:impl_split_ncv}
Let $M_t^{\circ\ang} = \Pi\big(X_t^\circ + \tau V_t^\circ\big)$. 
\begin{equation}\label{eq:ncv-sde-b}
\begin{split}
	\dd X_t^{\circ\ang} & = \frac{M_t^{\circ\ang} - X_t^{\circ\ang}}{\tau} \dd t, \qquad
	\dd M_t^{\circ\ang} = -3\frac{M_t^{\circ\ang}}{\tau} \dd t + \tau\,\Pi b(V^\circ_t)\dd t + \tau \dd W^{\circ\ang}_t, \\
	\dd X_t^{\circ\rad} & = V_t^{\circ\rad}\dd t, \qquad
	\dd V_t^{\circ\rad} = \Big[\Pi^\rad b(V^\circ_t)+\tau F\big(\langle u,X^{\circ\rad}_t+\tau V^{\circ\rad}_t\rangle,\sigma(t)\big)u\Big]\dd t + \dd W^{\circ\rad}_t .
\end{split}
\end{equation}
One step from $t$ to $t+\delta$ of the scheme that combines the exact flow of $b$ with the implicit treatment of the
term $-3M^{\circ\ang}_t/\tau$ is as follows. Let $\tau_\delta:=\tau(t+\delta)$ and
\[ \Delta=\Phi_\delta(V^\circ_t)-V^\circ_t+\delta\,\tau(t)F\big(\langle u,X^{\circ\rad}_t+\tau(t)V^{\circ\rad}_t\rangle,\sigma(t)\big)u+\big(W^\circ_{t+\delta}-W^\circ_t\big). \]
Then
\[
\begin{split}
 X^{\circ\rad}_{t+\delta}&=X^{\circ\rad}_t+\delta\,V^{\circ\rad}_t, \\   V^{\circ\rad}_{t+\delta}&=V^{\circ\rad}_t+\Pi^\rad\Delta,\\
 (\tau_\delta+3\delta)\,M_{t+\delta}^{\circ\ang} &= \tau_\delta\big(M_t^{\circ\ang} +\tau(t)\,\Pi\Delta\big),\\
 (\tau_\delta+\delta)\,X_{t+\delta}^{\circ\ang} &= \tau_\delta X_{t}^{\circ\ang} + \delta\, M_{t+\delta}^{\circ\ang},
\end{split}
\]
and $V^{\circ\ang}_{t+\delta}=(M^{\circ\ang}_{t+\delta}-X^{\circ\ang}_t)/(\tau_\delta+\delta)$, which equals $(M^{\circ\ang}_{t+\delta}-X^{\circ\ang}_{t+\delta})/\tau_\delta$ if $\tau_\delta>0$. 
\end{lem}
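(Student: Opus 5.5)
The proof has two parts: first derive the SDE system \eqref{eq:ncv-sde-b} from \eqref{eq:guided-nonlinear}, then check the one-step scheme. For the NCV guiding function the guided velocity equation is $\dd V^\circ_t=[b(V^\circ_t)+\tau\alpha(t,Z^\circ_t)]\dd t+\dd W^\circ_t$, with $\alpha$ as in Corollary \ref{cor:guided-block} and $\hat c_k(\tau)=\tau$, while $\dd X^\circ_t=V^\circ_t\dd t$. I would apply $\Pi$ and $\Pi^\rad$ to these two equations, exactly as in the proof of Theorem~\ref{thm:closed-system}. The only difference is the extra drift $b(V^\circ_t)$, and it splits as $\Pi b+\Pi^\rad b$. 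The radial equations then follow immediately, using $\Pi^\rad\alpha=F(\mu,\sigma)u$ and Lemma~\ref{lem:kron}(4) for $\mu$. For the angular part I would write $\Pi\alpha=-(3/\tau^3)\Pi m$, with $\Pi m=M^{\circ\ang}$, so that
\[
\dd V^{\circ\ang}_t=\Big[\Pi b-\tfrac{3}{\tau^2}M^{\circ\ang}_t\Big]\dd t+\dd W^{\circ\ang}_t .
\]
Next I would repeat the computation of Lemma~\ref{lem:discretisation}. Put $V^{\circ\ang}=(M^{\circ\ang}-X^{\circ\ang})/\tau$; this gives the $X^{\circ\ang}$ equation and $\dd\big((M-X)/\tau\big)=\tau^{-1}\dd M$. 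Equating with the display above gives $\dd M^{\circ\ang}=-3M^{\circ\ang}/\tau\,\dd t+\tau\Pi b\,\dd t+\tau\,\dd W^{\circ\ang}$. This settles \eqref{eq:ncv-sde-b}.

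For the scheme I would interpret a step as Lie splitting. First, the velocity increment $\Delta$ collects three contributions: the exact flow of $b$ over $[t,t+\delta]$, the explicit Euler step of the radial guiding term, and the noise increment. Since the angular guiding term is carried by $M$, it is excluded from $\Delta$. Applying $\Pi^\rad$ gives $V^{\circ\rad}_{t+\delta}$, and an explicit Euler step gives $X^{\circ\rad}_{t+\delta}$. For the angular part, the non-stiff forcing $\Pi b\,\dd t+\dd W^{\circ\ang}$ enters the $M$ equation multiplied by $\tau(t)$, and its increment is approximated by $\Pi\Delta$. Treating $-3M/\tau$ implicitly at $t+\delta$ gives
\[
M_{t+\delta}=M_t+\tau(t)\Pi\Delta-3\delta M_{t+\delta}/\tau_\delta .
\]
Multiplying by $\tau_\delta$ yields the stated formula. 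The $X^{\circ\ang}$ update is the implicit Euler step for $\dd X=(M-X)/\tau\,\dd t$, evaluated at $t+\delta$ and multiplied by $\tau_\delta$, exactly as in Lemma~\ref{lem:impl_euler_ncv}.

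Finally I would verify the formula for $V^{\circ\ang}$. The $X$ update rearranges to
\[
\tau_\delta\big(X_{t+\delta}-X_t\big)=\delta\big(M_{t+\delta}-X_{t+\delta}\big).
\]
Algebra then gives $(M_{t+\delta}-X_{t+\delta})/\tau_\delta=(M_{t+\delta}-X_t)/(\tau_\delta+\delta)$ whenever $\tau_\delta>0$. Both sides equal $(M_{t+\delta}-X_{t+\delta})(\tau_\delta+\delta)=\tau_\delta M_{t+\delta}-\tau_\delta X_t+\delta M_{t+\delta}-\dots$; I would check this directly by substituting $X_{t+\delta}=(\tau_\delta X_t+\delta M_{t+\delta})/(\tau_\delta+\delta)$. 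The main obstacle is not analytic. It is making precise that ``the scheme'' means this particular splitting, in particular that $\Pi\Delta$ replaces the angular forcing and the radial guiding term is applied explicitly. I would state that convention explicitly and then verify each line algebraically.
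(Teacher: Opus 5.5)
Your proposal is correct and follows the paper's proof. You project the guided velocity equation with $\Pi$ and $\Pi^\rad$, then read the scheme as the exact flow of $b$ plus explicit Euler for the remaining increments collected in $\Delta$, with $-3M^{\circ\ang}/\tau$ and the $X^{\circ\ang}$ drift treated implicitly; the differences are cosmetic: you get the $M^{\circ\ang}$ equation via the two-expressions argument of Lemma~\ref{lem:discretisation} where the paper computes directly $\dd M^{\circ\ang}_t=\Pi(\dd X^\circ_t+\tau\dd V^\circ_t-V^\circ_t\dd t)=\tau\Pi\dd V^\circ_t$, and your half-written identity for $V^{\circ\ang}_{t+\delta}$ becomes complete once you do the substitution you propose, which gives $M^{\circ\ang}_{t+\delta}-X^{\circ\ang}_{t+\delta}=\tau_\delta(M^{\circ\ang}_{t+\delta}-X^{\circ\ang}_t)/(\tau_\delta+\delta)$.
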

\begin{proof}
By \eqref{eq:guided-nonlinear}, $\dd V^\circ_t=\big[b(V^\circ_t)+\nabla_v\log g(t,Z^\circ_t)\big]\dd t+\dd W^\circ_t$, and by
Corollary~\ref{cor:ang radial processes} the guiding term is
$\nabla_v\log g=\tau F(\langle u,X+\tau V\rangle,\sigma)u-3\Pi(X+\tau V)/\tau^2$. Projecting with $\Pi^\rad$ gives the radial
equations. For the angular part, $\dd M^{\circ\ang}_t=\Pi\big(\dd X^\circ_t+\tau\dd V^\circ_t-V^\circ_t\dd t\big)=\tau\,\Pi\dd V^\circ_t$,
which gives the equation for $M^{\circ\ang}$, and the equation for $X^{\circ\ang}$ follows as in Lemma~\ref{lem:discretisation}.
In the scheme, $\Phi_\delta(V^\circ_t)-V^\circ_t$ replaces $\int_t^{t+\delta}b(V^\circ_s)\dd s$, and $\Delta$ collects all
increments of $V^\circ$ except the one due to $-3M^{\circ\ang}/\tau^2$, which has no radial component. The radial part is
an explicit Euler step. The angular equations are discretised implicitly in the term $-3M^{\circ\ang}/\tau$ and in the
drift of $X^{\circ\ang}$, as in Lemma~\ref{lem:impl_euler_ncv}, and explicitly in the other terms, which enter through
$\tau(t)\Pi\Delta$.
\end{proof}
Unlike in Corollary~\ref{cor:ang radial processes}, the radial and angular parts are coupled through $b$, which depends on the full velocity $V^\circ_t$; the guiding terms still separate. In the last step,
$\tau_\delta=0$, we have $M^{\circ\ang}_T=X^{\circ\ang}_T=0$, so that the discretised guided process ends exactly on the
ray; the formula for the velocity remains defined and gives $V^{\circ\ang}_T=-X^{\circ\ang}_t/\delta$, so that $X^{\circ\ang}_T=X^{\circ\ang}_t+\delta V^{\circ\ang}_T$ also holds in the last step.
For $b\equiv0$ the angular part reduces to the step of Lemma~\ref{lem:impl_euler_ncv}.

The weight \eqref{eq:weight-nonlinear} is accumulated along the same grid by the left-point rule,
\[ \int_t^{t+\delta}b(V^\circ_s)^\T\nabla_v\log g(s,Z^\circ_s)\dd s\approx\delta\, b(V^\circ_t)^\T\nabla_v\log g(t,Z^\circ_t). \]

\section{Bearings from a moving sensor}\label{sec:moving-sensor}

Throughout the paper the sensor is at the origin. Suppose instead that it moves along a known deterministic path
$s(t)\in\RR^p$, so that the bearing at time $T$ is $U=(X_T-s(T))/\|X_T-s(T)\|$ and $X_T$ is conditioned to lie on the
ray $\{s(T)+Ru\colon R>0\}$. Since $X_T-s(T)\mid Z_t=z\sim N\big(m(t,z)-s(T),\Sigma(t)\big)$, the guiding function and
its gradient are obtained from Theorem~\ref{thm:general case g and nabla log g} and Corollary~\ref{thm:gradlog_g} upon
replacing $m(t,z)$ by $m(t,z)-s(T)$; as $s(T)$ does not depend on $z$, $\Sigma(t)$, $\xi_t$, $\Pi_t$ and
$\nabla_z m(t,z)=L(t)$ are unchanged. The results of Section~\ref{sec:nonlinear} only involve $f$ and $g_u$ and therefore
hold without change. In the filter of Section~\ref{sec:smc}, the guided path on $[t_{n-1},t_n]$ is directed towards the
ray with apex $s(t_n)$, so that only the sensor positions at the observation times enter; for the exact first update the
prior mean of the position is shifted by $-s(t_1)$. If the position of the sensor at time $T$ is itself uncertain,
$S_T\sim N(\bar s,\Sigma_S)$ independent of $Z$, the same formulas apply with $s(T)$ replaced by $\bar s$ and $\Sigma(t)$ by
$\Sigma(t)+\Sigma_S$; as this covariance does not vanish when $t\uparrow T$, the conditioned process then no longer ends
on a ray.

A sensor manoeuvre is the classical way to make the range observable for a target with constant velocity
\citep{NardoneAidala1981}. Figure~\ref{fig:moving-sensor} is the analogue of Figure~\ref{fig:ncs-scenario} for a moving
sensor: the same true path ($\ka=400$, $\eta=0.01$) is observed from a sensor that starts at the origin, moves east at
the target speed $v_*=0.05$ for $t\le12$, across the initial line of sight of the target, and south afterwards. Compared
with Figure~\ref{fig:ncs-scenario}, the range is better determined during the first observations, as the motion of the
sensor across the line of sight gives parallax; later it is less precise, as the sensor ends far from the target and
moves nearly parallel to it, whereas in Figure~\ref{fig:ncs-scenario} the target passes the sensor at close range.

\begin{figure}[!ht]
\centering
\includegraphics[width=\textwidth]{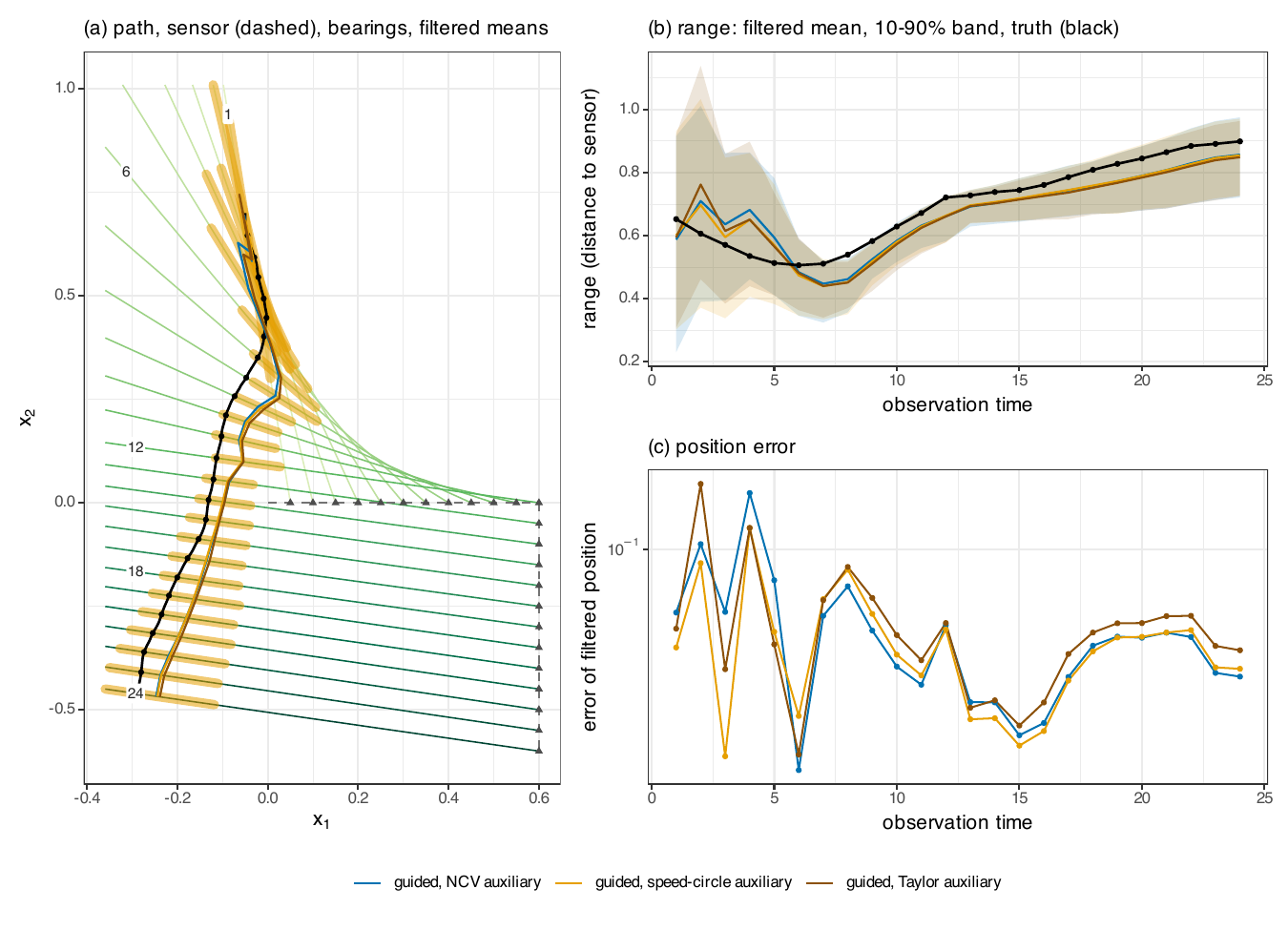}
\caption{As Figure~\ref{fig:ncs-scenario}, with the bearings observed from a moving sensor ($\ka=400$, $\eta=0.01$,
$N=5000$). (a) True path (black, dots at observation times), sensor path (dashed, triangles at the observation times),
observed bearings drawn from the sensor positions, filtered means, and the $10$--$90\%$ intervals of the filtering
distribution of the range for the speed-circle auxiliary. (b) Filtered mean of the range $\|X_{t_n}-s(t_n)\|$ (the
distance to the sensor) with the band between its $10\%$ and $90\%$ quantiles, and the true range (black). (c) Distance
between filtered mean and true position.}
\label{fig:moving-sensor}
\end{figure}
\clearpage   %

\acks Claude (Anthropic), including the model Claude Opus 5.5, was used to check derivations and proofs of the stated mathematical results for errors. The proof of Lemma~\ref{lem:g-lipschitz} was initially proposed by Claude and subsequently verified and revised by the author. The Julia code for the numerical experiments was written by Claude following the author's specification of the algorithms; Claude also wrote test scripts to check its correctness, and the author reviewed both the code and the tests. The choice and design of the numerical illustrations were made by the author. Claude was used for rewording small blocks of text of the manuscript. The author has checked all results and proofs and takes full responsibility for the content of the paper, including its mathematical correctness.
\competing There were no competing interests to declare which arose during the preparation or publication process of this article.

\bibliography{lit}

\begin{thebibliography}{10}

\bibitem{aidala1979kalman}
{\sc Aidala, V.~J.} (1979).
\newblock Kalman filter behavior in bearings-only tracking applications.
\newblock {\em IEEE Transactions on Aerospace and Electronic Systems\/} {\bf
  AES-15,} 29--39.

\bibitem{Baudoin2002}
{\sc Baudoin, F.} (2002).
\newblock Conditioned stochastic differential equations: theory, examples and
  application to finance.
\newblock {\em Stochastic Processes and their Applications\/} {\bf 100,}
  109--145.

\bibitem{bierkens2020}
{\sc Bierkens, J., van~der Meulen, F. and Schauer, M.} (2020).
\newblock Simulation of elliptic and hypo-elliptic conditional diffusions.
\newblock {\em Advances in Applied Probability\/} {\bf 52,} 173--212.

\bibitem{chopin2023}
{\sc Chopin, N., Fulop, A., Heng, J. and Thiery, A.~H.} (2023).
\newblock Computational doob's h-transforms for online filtering of discretely
  observed diffusions.
\newblock In {\em Proceedings of the 40th International Conference on Machine
  Learning}.
\newblock PMLR.
\newblock Honolulu, HI.
\newblock pp.~5904--5923.

\bibitem{Chopin2020introduction}
{\sc Chopin, N. and Papaspiliopoulos, O.} (2020).
\newblock {\em An Introduction to Sequential Monte Carlo}.
\newblock Springer series in statistics. Springer, Cham.

\bibitem{Clark2007SRF}
{\sc Clark, J. M.~C., Vinter, R.~B. and Yaqoob, M.~M.} (2007).
\newblock Shifted {R}ayleigh filter: A new algorithm for bearings-only
  tracking.
\newblock {\em IEEE Transactions on Aerospace and Electronic Systems\/} {\bf
  43,} 1373--1384.

\bibitem{DelyonHu2006}
{\sc Delyon, B. and Hu, Y.} (2006).
\newblock Simulation of conditioned diffusion and application to parameter
  estimation.
\newblock {\em Stochastic Processes and their Applications\/} {\bf 116,}
  1660--1675.

\bibitem{Farina1999}
{\sc Farina, A.} (1999).
\newblock Target tracking with bearings-only measurements.
\newblock {\em Signal Processing\/} {\bf 78,} 61--78.

\bibitem{Fearnhead2002}
{\sc Fearnhead, P.} (2002).
\newblock Markov chain monte carlo, sufficient statistics, and particle
  filters.
\newblock {\em Journal of Computational and Graphical Statistics\/} {\bf 11,}
  848--862.

\bibitem{FitzsimmonsPitmanYor1993}
{\sc Fitzsimmons, P., Pitman, J. and Yor, M.} (1993).
\newblock Markovian bridges: Construction, {P}alm interpretation, and splicing.
\newblock In {\em Seminar on Stochastic Processes, 1992}.
\newblock vol.~33 of {\em Progress in Probability}. Birkh{\"a}user, Boston
  pp.~101--134.

\bibitem{gordon1993novel}
{\sc Gordon, N.~J., Salmond, D.~J. and Smith, A. F.~M.} (1993).
\newblock Novel approach to nonlinear/non-{G}aussian {B}ayesian state
  estimation.
\newblock {\em IEE Proceedings F (Radar and Signal Processing)\/} {\bf 140,}
  107--113.

\bibitem{HermesLaSalle1969}
{\sc Hermes, H. and LaSalle, J.~P.} (1969).
\newblock {\em Functional Analysis and Time Optimal Control} vol.~56 of {\em
  Mathematics in Science and Engineering}.
\newblock Academic Press, New York.

\bibitem{HernandezStumpfhauser2017}
{\sc Hernandez-Stumpfhauser, D., Breidt, F.~J. and van~der Woerd, M.~J.}
  (2017).
\newblock The general projected normal distribution of arbitrary dimension:
  Modeling and {B}ayesian inference.
\newblock {\em Bayesian Analysis\/} {\bf 12,} 113--133.

\bibitem{Jacod1985}
{\sc Jacod, J.} (1985).
\newblock Grossissement initial, hypoth{\`e}se ({H}') et th{\'e}or{\`e}me de
  {G}irsanov.
\newblock In {\em Grossissements de filtrations: exemples et applications}. ed.
  T.~Jeulin and M.~Yor.
\newblock vol.~1118 of {\em Lecture Notes in Mathematics}. Springer, Berlin
  pp.~15--35.

\bibitem{KaratzasShreve1991}
{\sc Karatzas, I. and Shreve, S.~E.} (1991).
\newblock {\em {B}rownian Motion and Stochastic Calculus} second~ed. vol.~113
  of {\em Graduate Texts in Mathematics}.
\newblock Springer, New York.

\bibitem{Li2003survey1}
{\sc Li, X.~R. and Jilkov, V.~P.} (2003).
\newblock Survey of maneuvering target tracking. part i: Dynamic models.
\newblock {\em IEEE Transactions on Aerospace and Electronic Systems\/} {\bf
  39,} 1333--1364.

\bibitem{LiptserShiryaev2001}
{\sc Liptser, R.~S. and Shiryaev, A.~N.} (2001).
\newblock {\em Statistics of Random Processes {I}: General Theory} 2~ed.
\newblock Springer, Berlin.

\bibitem{Mallick2022}
{\sc Mallick, M., Tian, X., Zhu, Y. and Morelande, M.} (2022).
\newblock Angle-only filtering of a maneuvering target in {3D}.
\newblock {\em Sensors\/} {\bf 22,} 1422.

\bibitem{MansuyYor2006}
{\sc Mansuy, R. and Yor, M.} (2006).
\newblock {\em Random Times and Enlargements of Filtrations in a {B}rownian
  Setting} vol.~1873 of {\em Lecture Notes in Mathematics}.
\newblock Springer, Berlin.

\bibitem{NardoneAidala1981}
{\sc Nardone, S.~C. and Aidala, V.~J.} (1981).
\newblock Observability criteria for bearings-only target motion analysis.
\newblock {\em IEEE Transactions on Aerospace and Electronic Systems\/} {\bf
  AES-17,} 162--166.

\bibitem{Ozelci2013}
{\sc {\"O}zel{\c{c}}i, A.~C.} (2013).
\newblock Tracking and estimation algorithms for bearings only measurements.
\newblock {\em PhD thesis}.
\newblock Imperial College London.

\bibitem{PittShephard1999}
{\sc Pitt, M.~K. and Shephard, N.} (1999).
\newblock Filtering via simulation: Auxiliary particle filters.
\newblock {\em Journal of the American Statistical Association\/} {\bf 94,}
  590--599.

\bibitem{reiss_sde_lecture}
{\sc Rei{\ss}, M.}
\newblock Stochastic differential equations: Lecture notes 2007.
\newblock Humboldt-Universit{\"a}t zu Berlin.

\bibitem{robert1999monte}
{\sc Robert, C.~P. and Casella, G.} (1999).
\newblock {\em Monte Carlo statistical methods}.
\newblock Springer, New York.

\bibitem{RogersWilliams_vol2}
{\sc Rogers, L. C.~G. and Williams, D.} (2000).
\newblock {\em Diffusions, {M}arkov processes, and martingales. {V}ol. 2}.
\newblock Cambridge Mathematical Library. Cambridge University Press,
  Cambridge.
\newblock It\^{o} calculus, Reprint of the second (1994) edition.

\bibitem{Sarkka2013}
{\sc S\"arkk\"a, S.} (2013).
\newblock {\em Bayesian Filtering and Smoothing}.
\newblock Institute of Mathematical Statistics Textbooks. Cambridge University
  Press, Cambridge.

\bibitem{schauer2017guided}
{\sc Schauer, M., {van der Meulen}, F. and {van Zanten}, H.} (2017).
\newblock Guided proposals for simulating multi-dimensional diffusion bridges.
\newblock {\em Bernoulli\/} {\bf 23,} 2917--2950.

\bibitem{bffg}
{\sc van~der Meulen, F.~H. and Sommer, S.} (2025).
\newblock Backward filtering forward guiding.
\newblock {\em Journal of Machine Learning Research\/} {\bf 26,} 1--51.

\bibitem{WangGelfand2013}
{\sc Wang, F. and Gelfand, A.~E.} (2013).
\newblock Directional data analysis under the general projected normal
  distribution.
\newblock {\em Statistical Methodology\/} {\bf 10,} 113--127.

\bibitem{wang2009target}
{\sc Wang, X., Morelande, M. and Moran, B.} (2009).
\newblock Target motion analysis using single sensor bearings-only
  measurements.
\newblock In {\em 2009 2nd International Congress on Image and Signal
  Processing}.
\newblock IEEE.
\newblock Tianjin, China.
\newblock pp.~1--6.

\end{thebibliography}
\bibliographystyle{APT}

\end{document}